\newcommand{\margnote}[1]{
\ifthenelse{\boolean{shownotes}}%
{\marginpar{\raggedright\tiny\texttt{#1}}}%
{}%
}
\newcommand{\hole}[1]{
\ifthenelse{\boolean{shownotes}}%
{\begin{center} \fbox{ \rule {.25cm}{0cm}
\rule[-.1cm]{0cm}{.4cm} \parbox{.85\textwidth}{\begin{center}
\texttt{#1}\end{center}} \rule {.25cm}{0cm}}\end{center}}
{}
}
\numberwithin{equation}{section} 
\newcounter{cont}[section]
\newtheorem{theorem}[cont]{Theorem}
\newtheorem{proposition}[cont]{Proposition}
\newtheorem{lemma}[cont]{Lemma}
\theoremstyle{definition}
\newtheorem{remark}[cont]{Remark}
\newtheorem{definition}[cont]{Definition}
 \theoremstyle{remark}
\renewcommand{\k}{\kappa}
\renewcommand{\Re}{\mathrm{Re}\,} 
\renewcommand{\Im}{\mathrm{Im}\,}
\newcommand{\e}{\varepsilon}
\newcommand{\N}{\mathbb{N}}
\newcommand{\R}{\mathbb{R}}
\newcommand{\C}{\mathbb{C}}
\newcommand{\E}{\mathbb{E}}
\newcommand{\tiu}{\widetilde{u}}
\newcommand{\tiv}{\widetilde{v}}
\newcommand{\cL}{{\mathcal{L}}}
\newcommand{\cD}{{\mathcal{D}}}
\newcommand{\cR}{{\mathcal{R}}}
\newcommand{\tcL}{\widetilde{\mathcal{L}}}
\newcommand{\ccC}{\mathscr{C}}
\newcommand{\ccB}{\mathscr{B}}
\newcommand{\ess}{\sigma_\mathrm{\tiny{ess}}}
\newcommand{\ptsp}{\sigma_\mathrm{\tiny{pt}}}
\begin{document}

\title[Existence and stability of viscous-dispersive shock profiles]{Existence and spectral stability analysis of viscous-dispersive shock profiles for isentropic compressible fluids of Korteweg type}

\author[R. Folino]{Raffaele Folino}

\address[R. Folino]{Departamento de Matem\'aticas y Mec\'anica\\Instituto de 
Investigaciones en Matem\'aticas Aplicadas y en Sistemas\\Universidad Nacional Aut\'onoma de 
M\'exico\\Circuito Escolar s/n, Ciudad Universitaria C.P. 04510 Cd. Mx. (Mexico)}

\email{folino@aries.iimas.unam.mx}

\author[C. Lattanzio]{Corrado Lattanzio}

\address{{\rm (C. Lattanzio)} Dipartimento di Ingegneria e Scienze dell'Informazione e Matematica\\Universit\`a degli Studi dell'Aquila\\via Vetoio (snc), Coppito I-67010, L'Aquila (Italy)}

\email{corrado.lattanzio@univaq.it}

\author[R. G. Plaza]{Ram\'on G. Plaza}

\address[R. G. Plaza]{Departamento de Matem\'aticas y Mec\'anica\\Instituto de 
Investigaciones en Matem\'aticas Aplicadas y en Sistemas\\Universidad Nacional Aut\'onoma de 
M\'exico\\Circuito Escolar s/n, Ciudad Universitaria C.P. 04510 Cd. Mx. (Mexico)}

\email{plaza@aries.iimas.unam.mx}

\keywords{Viscous-dispersive shock profiles; compressible fluids of Korteweg type; spectral stability}

\subjclass[2020]{35Q30,35Q35}



\begin{abstract} 
The system describing the dynamics of a compressible isentropic fluid exhibiting viscosity and internal capillarity in one space dimension and in Lagrangian coordinates, is considered. It is assumed that the viscosity and the capillarity coefficients are nonlinear smooth, positive functions of the specific volume, making the system the most general case possible. It is shown, under very general circumstances, that the system admits traveling wave solutions connecting two constant states and traveling with a certain speed that satisfy the classical Rankine--Hugoniot and Lax entropy conditions, and hence called viscous-dispersive shock profiles. These traveling wave solutions are unique up to translations and have arbitrary amplitude. The spectral stability of such viscous-dispersive profiles is also considered. It is shown that the essential spectrum of the linearized operator around the profile (posed on an appropriate energy space) is stable, independently of the shock strength. With the aid of energy estimates, it is also proved that the point spectrum is also stable, provided that the shock amplitude is sufficiently small and a structural condition on the inviscid shock is fulfilled. 
\end{abstract}


\maketitle

\section{Introduction}
\label{secintro}

In this paper we study the system of equations describing the dynamics of a compressible isentropic fluid exhibiting viscosity and internal capillarity in a one dimensional unbounded domain and in Lagrangian coordinates,
\begin{equation}
\label{eq:NSK-Lag}
\left\{
\begin{aligned}
v_t -  u_x &= 0,\\
u_t + p(v)_x &= \displaystyle{\Big(\frac{\mu(v)}v u_x\Big)_x - \Big(\k(v)v_{xx}+\tfrac{1}{2}\k'(v)v^2_x\Big)_x}.
\end{aligned}
\right.
\end{equation} 
Here $x\in \mathbb{R}$ and $t>0$ denote the space and time variables, respectively. The scalar unknowns $u=u(x,t)$ and $v=v(x,t)$ denote the velocity and the specific volume of the fluid. The function $p = p(v)$ is the pressure and  $\mu = \mu(v)$ and $\kappa = \kappa(v)$ are smooth positive functions of the specific volume, representing generic nonlinear viscosity and capillarity coefficients, respectively.

Motivated by an early work by J. D. van der Waals \cite{vdW1894} and in order to describe internal capillarity effects in diffuse interfaces for liquid vapor flows, in 1901 the Dutch physicist D. J. Korteweg \cite{Kortw1901} proposed constitutive equations for stress tensors that included density gradients and which were, in general, incompatible with the Clausius--Dulhem inequality of equilibrium thermodynamics. This problem was later circumvented by Dunn and Serrin \cite{DS85}, who rigorously rederived a system of equations to account for compressible fluids endowed with internal capillarity under the framework of Rational Mechanics. System \eqref{eq:NSK-Lag} is the isentropic version of the model derived by Dunn and Serrin in one space dimension, in the case of generic nonlinear viscosity and capillarity coefficients, and in Lagrangian coordinates (for a derivation of system \eqref{eq:NSK-Lag} from the original Dunn and Serrin system in Eulerian coordinates, see \cite{PlV22}). Notice that when the capillarity coefficient is equal to zero, $\kappa(v) \equiv 0$ for all $v > 0$, then system \eqref{eq:NSK-Lag} reduces to the compressible Navier--Stokes equations. For this reason, \eqref{eq:NSK-Lag} is also known as the one-dimensional \emph{Navier--Stokes--Korteweg (NSK) system} (cf. \cite{ChZha14,CLS19,BrGL-V19}).

The NSK system has been the subject of numerous mathematical results in the scientific literature, pertaining to the local and global existence of weak \cite{BrDjL03,Hasp09,Hasp11,DD01}, strong \cite{Kot08,Kot10} and classical \cite{HaLi94,HaLi96a,HaLi96b} solutions, as well as to the study of phase transitions \cite{Sl83,Sl84a,FrKo17,FrKo19,HaSl83}, or the decay of perturbations of thermodynamical equilibria \cite{WaTa11,TZh14,GLZh20,PlV22,GLT17}. A particularly important field of study concerns the emergence and the stability of \emph{viscous-dispersive shock profiles}, which are traveling wave solutions to system \eqref{eq:NSK-Lag} of the form $(v,u)(x,t) = (V,U)(x-st)$, traveling with speed $s \in \R$ and connecting two constant states $(V^\pm, U^\pm)$ as $x -st \to \pm \infty$. It is assumed that the triplet $(V^\pm, U^\pm,s)$ constitutes a shock front solution to the system in the absence of viscosity and capillarity effects, and hence satisfying the classical Rankine--Hugoniot jump conditions as well as Lax entropy conditions. The interplay between diffusion (viscosity) and dispersion (capillarity) effects has played a preponderant role in the literature.

The existence theory of viscous-dispersive shock profiles dates back to early works by Smoller and Shapiro \cite{SmSh82}, Pego \cite{Pe85} and Khodja \cite{KhdPhD89}, mainly for constant viscosity and capillarity coefficients. More recent existence results include \cite{ChZha14,HKKL25,Hoe14,HoAb07,LaZ21a} for the NSK system or for related models, such as Quantum Hydrodynamics (QHD) systems or scalar equations. The theory of stability of viscous-dispersive shock profiles  is much less developed. The only known stability results pertain to scalar equations \cite{PaW04,HZ00}, to isentropic fluid dynamics with constant capillarity and constant viscosity \cite{KhdPhD89,ZLY16,Z00,HKKL25,Sl83,Sl84a}, or with constant capillarity and variable viscosity \cite{Hu09,CHZ15}.

In this paper, we study both existence and stability in the most general case: where the viscosity and capillarity coefficients, treated as thermodynamic potentials, are nonlinear functions of the specific volume $v > 0$. Our first result is the most general possible existence theorem for isentropic NSK systems. It guarantees that a unique (up to translations) viscous-dispersive shock profile exists for any $(V^\pm, U^\pm,s)$ satisfying the usual jump and entropy conditions, \emph{regardless of the shock amplitude and for any smooth positive viscosity and capillarity functions} (see Theorems \ref{theo:ex-L} and \ref{theo:ex-2-L} below). The hypotheses are minimal and hence the conclusion applies to many different situations. The analysis is based on a careful description of the stable and unstable manifolds around equilibria and on the introduction of a novel auxiliary system which results from removing the viscous part. Regarding the stability analysis, our work establishes the \emph{spectral} stability property, that is, focuses on the \emph{linearized} operator around the profile posed on an appropriate energy space and aims to establish the absence of spectra with positive real part. It is well-known from the theory of purely viscous shocks (see, e.g., \cite{HuTh02}), that spectral stability is tantamount to linear stability under zero mass perturbations. In this context, we prove that the essential spectrum of the linearized operator is stable under very general circumstances (see Theorem \ref{thmstabess} below). The stability of the point spectrum, however, is more delicate. We were able to prove the stability of point eigenvalues only in the case of sufficiently weak shocks that satisfy, in addition, a particular condition (see equation \eqref{eq:imp-ass} below) which holds trivially in the case of constant capillarity, hence extending previous results. This result is based on energy estimates at the spectral level, extending the results of Humpherys \cite{Hu09} for constant viscosity and capillarity.  Our existence and stability results stand out because they apply to the more general case of variable coefficients, which is new in the literature. A special mention deserves the recent analysis by Han \emph{et al.} \cite{HKKL25}, which establishes the \emph{nonlinear} stability of sufficiently weak profiles for constant viscosity and capillarity, without assuming the zero-mass condition. The authors employ the so called $a$-method in order to get rid of the zero-mass assumption. 
%

\subsection*{Plan of the paper}

Section \ref{sec:prel-L} contains the description of the shock front solutions to the ``inviscid" version of system \eqref{eq:NSK-Lag} known as the $p$-system (that is, in the absence of viscosity and capillarity) as well as the resulting ODE system for the profiles when the capillarity and viscosity are swichted on. The central Section \ref{sec:ex-L} is devoted to prove the existence of viscous-dispersive shock profiles to system \eqref{eq:NSK-Lag} in the most general case with nonlinear viscosity and capillarity coefficients. Section \ref{sec:properties-small-profiles-L} contains a description of further properties of such profiles, specially in the case of small-amplitude shocks. In Section \ref{sec:essential-L} the linearization around a viscous-dispersive profile is established and the stability of its essential spectrum is proved under very general circumstances.  Section \ref{sec:point-L} is devoted to proving that the point spectrum is also stable, but under more restrictive conditions which include sufficiently small shock amplitudes. 
Finally, we collect the results valid for the system in Eulerian coordinates in the appendix.

\subsection*{Notations} We denote the real and imaginary parts of a complex number $\lambda \in \C$ by $\Re\lambda$ and $\Im\lambda$, respectively, as well as complex conjugation by ${\lambda}^*$. Standard Lebesgue and Sobolev spaces of complex-valued functions on the real line, namely $L^2(\R;\C)$ and $H^m(\R;\C)$ with $m \in \N$, will be denoted as $L^2$ and $H^m$, respectively. They are endowed with the standard inner products and norms. Linear operators acting on infinite-dimensional spaces are indicated with calligraphic letters (e.g., $\cL$). If $X$ and $Y$ are Banach spaces then $\ccC(X,Y)$ and $\ccB(X,Y)$ will denote the spaces of all closed and bounded linear operators from $X$ to $Y$, respectively. For any $\cL \in \ccC(X,Y)$ we denote its domain as $\cD(\cL) \subseteq X$ and its range as $\cR(\cL) = \cL (\cD(\cL)) \subseteq Y$. 

\section{Preliminaries}
\label{sec:prel-L}

In this Section
we recall   some basic definitions and properties of the underlying 
equation for gas dynamics, that is, the well-known $p$--system,
\begin{equation}\label{eq:Euler-Lag}
\begin{cases}
v_t -  u_x = 0,\\
u_t + p(v)_x = 0.
\end{cases}
\end{equation}
In \eqref{eq:Euler-Lag}, we recall that $v=v(x,t) > 0$ is the specific volume, (equal to $1/\rho$, $\rho$ being the density),  $u=u(x,t)$ is the velocity, and $\mu(v)$ and $\k(v)$ are the (positive) viscosity and capillarity  coefficients, both depending on the specific volume, which are assumed to be sufficiently smooth.
As it is customary, for the sufficiently smooth pressure function $p(v)$,  we assume 
\begin{equation}
\label{cond_pressure-L}
p'(v) < 0, \quad p''(v) > 0, \qquad \mbox{ for }v > 0.
\end{equation}
Finally, it is worth recalling the following relations lining the above quantities with the corresponding ones in Eulerian coordinates (for details, see \cite{PlV22}):
\begin{align}\label{eq:relationlagrangian}
    p(v) = \tilde{p}(1/v), & & \mu(v) = \tilde{\mu}(1/v), & & \k(v)=\frac{\tilde{\k}(1/v)}{v^5}
\end{align}
(here the tilded variables denote the corresponding thermodynamic potential functions of the density, $\rho > 0$, in Eulerian coordinates). As it is well-known,  system \eqref{eq:Euler-Lag}  
 can be recast in the conservative form 
$$W_t + F(W)_x = 0,$$ 
for the vector $W = (v, u)^\top$ with  flux $F(W) = F(v,u) =  (-u,  p(v))^\top$.
Moreover, the Jacobian of $F$ is given by
\begin{equation*}
DF(v,u) = 
\begin{pmatrix}
0 & -1\\
  p'(v) & 0
\end{pmatrix},
\end{equation*}
and the  $p$--system is hyperbolic, with characteristic speeds (eigenvalues of $DF(v,u)$) given by 
\begin{equation*}
\lambda_1(W) =  -  \sqrt{-p'(v)},\qquad \lambda_2(W) = \sqrt{-p'(v)}.
\end{equation*}

Let us now consider the full system \eqref{eq:NSK-Lag} with nonlinear (generic) viscosity and capillarity coefficients. Viscous-dispersive shock profiles are traveling waves solutions to \eqref{eq:NSK-Lag} of the form
\begin{equation*}
    v(x,t) = V(x - st), \qquad \qquad u(x,t) = U(x - st), 
\end{equation*}
where $s \in \R$ is the speed of the traveling wave, with prescribed end states at $\pm\infty$: 
\begin{equation}\label{eq:profiles-end-Lag}
    V^\pm = \lim_{y \rightarrow \pm \infty} V(y), \qquad\qquad   U^\pm = \lim_{y \rightarrow \pm \infty} U(y),
\end{equation}
where we introduced the parameter along the profile $y = x - s t$. 
Clearly, the profiles $V$, $U$ solve
\begin{equation}\label{eq:profiles-L}
\begin{cases}
    -s V' - U' = 0,\\
\displaystyle{-s U' +  p(V)' = \left ( \frac{\mu(V)}{V}U' \right )' -   \left (\k(V)V'' + \frac12 \k'(V)(V')^2 \right )'}.
\end{cases}
\end{equation}
On the other hand, the triplets $(s;V^\pm,U^\pm)$ shall verify appropriate conditions to define admissible shocks for the $p$--system  as recalled here below.
\subsection{Rankine--Hugoniot conditions and the equations for the profiles}\label{subsec:RH-L}
The first requirement linking the speed $s$ and the end states are the Rankine--Hugoniot conditions for the system \eqref{eq:Euler-Lag}, that is
\begin{align}
&s (V^+ - V^-) = U^- - U^+, \label{RH1-L}\\
&s (U^+ - U^-) = p(V^+) - p(V^-). \label{RH2-L}
\end{align}
Relations \eqref{RH1-L} and \eqref{RH2-L} also imply that
\begin{equation}
\label{eq:RH-rewr-L}
    s^2(V^+ - V^-) = p(V^-) - p(V^+),
\end{equation}
and, consequently, the following constants are well--defined:
\begin{equation} \label{def_C-L}
    \begin{aligned}
        A&:= sV^++U^+=sV^-+U^-,\\
        B&:= sU^+ - p(V^+)=sU^- - p(V^-),\\
        C&:= s^2V^+ + p(V^+)=s^2V^- + p(V^-).
    \end{aligned}
\end{equation}
Hence, integrating the equations in \eqref{eq:profiles-L}, we readily obtain 
\begin{align*}
    &U(y)=-sV(y)+A;\\
    &\frac{\mu(V)}{V}U'   -    \k(V)V'' - \frac12 \k'(V)(V')^2 
    = p(V) - sU + B.
\end{align*}
Therefore, the profile $V$ of the specific volume solves  the following second order ODE
\begin{equation*}
  -s\frac{\mu(V)}{V}V' -   \k(V)V'' - \frac12 \k'(V)(V')^2 = p(V) + s^2 V -sA + B = 
    p(V) + s^2 V -C, 
\end{equation*}
which can be rewritten as the $2\times2$ dynamical system 
\begin{equation}\label{eq:V-Q}
    \begin{cases}
        V' = Q,\\
        Q' = \displaystyle{-\frac{ 1}{\k(V)} \left [ p(V) + s^2 V - C 
         +s\frac{\mu(V)}{V}Q
         + \frac{1}{2}  \k'(V) Q^2\right ]}.
    \end{cases}     
\end{equation}
%

In the remaining part of this preliminary Section we shall introduce also the admissibility conditions for the discontinuity under consideration. 
%
%
%
%
\subsection{Compressive Lax shocks}\label{subsec:Lax-L}
The discontinuity $(s; W^{\pm}) = (s;V^\pm,U^\pm)$ is a weak solution of the $p$--system \eqref{eq:Euler-Lag} in view of the Rankine--Hugoniot conditions outlined in the previous Section. Now, we shall require it defines a compressible Lax shock of the first or the second family, as detailed here below \cite{Da4e}.
\subsubsection{Compressive Lax 1--shock (backward shock)}
Suppose the end states $W^{\pm}$ and the speed $s$ satisfy the condition for a compressive Lax 1--shock, that is
\begin{equation}\label{cond_1_shock-L}
\lambda_1(W^+) < s < \lambda_1(W^-), \qquad \ s< \lambda_2(W^+).
\end{equation}
In particular we have $s<0$ (and hence the name \emph{backward shock}) and the second condition is trivially verified being $\lambda_2(W)>0$. 
Moreover, \eqref{cond_1_shock-L} also implies 
\begin{equation*}
    -\sqrt{-p'(V^-)} = \lambda_1(W^-) > \lambda_1(W^+) 
    =  -\sqrt{-p'(V^+)},
\end{equation*}
that is 
\begin{equation}\label{eq:V->V+1shock-L}
    V^- > V^+
\end{equation}
in view of the monotonicity of $p'$ stated in \eqref{cond_pressure-L}.
Finally, from the above relation we conclude
\begin{equation*}
    U^+ - U^- = s(V^--V^+) < 0.
\end{equation*}
\subsubsection{Compressive Lax 2--shock (forward shock)}
On the other hand, suppose the end states $W^{\pm}$ and the speed $s$ satisfy the condition for a compressive Lax 2--shock:
\begin{equation}
\label{cond_2_shock-L}
\lambda_2(W^+) < s < \lambda_2(W^-), \qquad  \ s> \lambda_1(W^-).
\end{equation}
Then $s>0$ (\emph{forward shock}) and 
 \eqref{cond_2_shock-L} this time rewrites
\begin{equation*}
    \sqrt{-p'(V^-)} = \lambda_2(W^-) >  \lambda_2(W^+)  
    = \sqrt{-p'(V^+)},
\end{equation*}
which implies
\begin{equation}\label{eq:V-<V+2shock-L}
    V^+> V^-
\end{equation}
and again
\begin{equation*}
    U^+ - U^- = s(V^--V^+) < 0.
\end{equation*}

\section{Existence of viscous-dispersive shock profiles}
\label{sec:ex-L}

In this Section we prove existence of a solution $(V,Q)$ to \eqref{eq:V-Q} connecting $(V^-,0)$ with $(V^+,0)$, provided that the end states $(V^\pm,U^\pm)$ and the speed $s$ define a compressive $1$--shock (backward shock) as recalled here above. The proof in the case of a forward shock with $s > 0$ is analogous and we omit it. Hence, in the case at hand we recall that, in particular (see \eqref{eq:V->V+1shock-L}),
\begin{equation*}
    s<0, \qquad V^->V^+, \qquad U^+ - U^- = s(V^--V^+) < 0.
\end{equation*}

As it will be soon manifest, in order to prove the existence of the profile, a crucial role will be played by the following \emph{auxiliary system},
\begin{equation}\label{eq:V-Q-reduced}
    \begin{cases}
        V' = Q,\\
        Q' = \displaystyle{-\frac{f(V)}{\k(V)} - \frac{1}{2} \frac{\k'(V)}{\k(V)}  Q^2}
    \end{cases}     
\end{equation}
obtained from \eqref{eq:V-Q} after removing the ``viscous'' term $- (s \mu(V)/V \k(V)) Q$, and where
\begin{equation}\label{eq:def-f-Lag}
    f(V) : =  p(V) + s^2V-C.
\end{equation}
Since $f(V^\pm)=0$ (see \eqref{def_C-L}), we conclude that $(V^\pm,0)$ are stationary points of both \eqref{eq:V-Q} and \eqref{eq:V-Q-reduced}. Hence,
let us start by analyzing the local behavior of these systems around them.

\subsection{Linearization of the auxiliary system at equilibrium points}
\label{subsec:DynSystLinear-L}

For $\Phi=(V,Q)^\top$, the auxiliary system \eqref{eq:V-Q-reduced} can be rewritten as follows:  
\begin{equation*}
\Phi'=G(\Phi):=\begin{pmatrix}
    Q \\
    \displaystyle{-\frac{f(V)}{\k(V)}  - \frac{1}{2}\frac{\k'(V)}{\k(V)}Q^2}
\end{pmatrix}.
\end{equation*} 
Recalling that $f(V^\pm)=0$, the Jacobian of $G$, evaluated at the equilibrium points $(V^\pm,0)$, is given by
\begin{equation*}
D_\Phi G(V^\pm,0)=\begin{pmatrix}
    0 & 1 \\
    \displaystyle{-\frac{f'(V^\pm)}{\k(V^\pm)}} & 0
\end{pmatrix},
\end{equation*}
where 
\begin{equation*}
    f'(V) = p'(V) + s^2.
\end{equation*}
Therefore, the eigenvalues $\Lambda = \Lambda(V^\pm)$ of this matrix are solution of the characteristic equation
\begin{equation*}
\Lambda^2+\frac{f'(V^\pm)}{\k(V^\pm)}=0.
\end{equation*}
Due to the convexity of $p$ (see assumption \eqref{cond_pressure-L}), the function $f$ is convex for any $V>0$, and therefore
\begin{equation}\label{eq:sign-f-L}
    f(V)<0,\ \mbox{for any}\ V\in(V^+,V^-);\qquad f'(V^+)<0, \ f'(V^-)>0,
\end{equation}
 the latter properties being also a direct consequence of \eqref{cond_1_shock-L}.
Hence, the last inequalities in \eqref{eq:sign-f-L} imply that $(V^-,0)$ is a centre and  $(V^+
,0)$ is a saddle point for
\eqref{eq:V-Q-reduced}.
Finally, 
for future purposes, we compute the eigenvectors of $D_\Phi G(V^+,0)$, which give the direction of the stable/unstable manifolds corresponding to the saddle point $(V^+,0)$. 
For this purpose, let us denote  
\begin{equation*}
    \Lambda_s(V^+):=-\sqrt{-\frac{f'(V^+)}{\k(V^+)}}; \qquad \Lambda_u(V^+):=\sqrt{-\frac{f'(V^+)}{\k(V^+)}}.
\end{equation*}
The stable manifold of $(V^+,0)$ is tangent to the eigenspace corresponding to the negative eigenvalue of $D_\Phi G(V^+,0)$, that is $\lambda_s(V^+)$, while the unstable manifold is tangent to the eigenspace corresponding to the positive eigenvalue $\lambda_u(V^+)$. These eigenspaces are  given by the following eigenvectors  of $D_\Phi G(V^+,0)$,
\begin{equation}\label{eq:tang-lin-L}
    \tau_s(V^+):=(1,\Lambda_s(V^+)), \qquad \tau_u(V^+):=(1,\Lambda_u(V^+)),
\end{equation} 
 corresponding to the negative eigenvalue $\Lambda_s(V^+)$ and the positive one $\Lambda_u(V^+)$, respectively.

\subsection{Linearization of the full system at equilibrium points}
\label{subseclinfull}

We now analyze the behavior of the linearization of \eqref{eq:V-Q} at the equilibrium points $(V^\pm,0)$.
For this system, the Jacobian evaluated at $(V^\pm,0)$ is given by
\begin{equation*}
    J(V^\pm):=\begin{pmatrix}
    0 & 1 \\
    \displaystyle{-\frac{f'(V^\pm)}{\k(V^\pm)}} & -\displaystyle\frac{s\mu(V^\pm)}{\k(V^\pm)V^\pm}
\end{pmatrix}.
\end{equation*}
Then, the  eigenvalues $\lambda = \lambda(V^\pm)$ of the above matrix satisfy the characteristic equation
\begin{equation*}
    \lambda^2+\displaystyle\frac{s\mu(V^\pm)}{\k(V^\pm)V^\pm}\lambda+\frac{f'(V^\pm)}{\k(V^\pm)}=0,
\end{equation*}
with discriminant given by 
\begin{equation}\label{eq:discriminant-L}
  \Delta(V^\pm)=\left[\frac{s\mu(V^\pm)}{\k(V^\pm)V^\pm}\right]^2-\frac{4f'(V^\pm)}{\k(V^\pm)}.
\end{equation}
Since $f'(V^+)<0$, we conclude $\Delta(V^+)>0$ and $(V^+,0)$ is a saddle point for system \eqref{eq:V-Q}. Moreover,  the (real) eigenvalues of $J(V^+)$ are 
\begin{align*}
    \lambda_s(V^+)&:=-\frac{s\mu(V^+)}{2\k(V^+)V^+}-\frac{\sqrt{\Delta(V^+)}}2<0,\\
    \lambda_u(V^+)&:=-\frac{s\mu(V^+)}{2\k(V^+)V^+}+\frac{\sqrt{\Delta(V^+)}}2>0.
\end{align*}

On the other hand, $f'(V^-)>0$ implies that $(V^-,0)$ is an unstable equilibrium for \eqref{eq:V-Q}, because both eigenvalues of $J(V^-)$ have negative real part. Moreover, this point  is either an unstable node or an unstable focus, depending on whether $\Delta(V^-)$ is positive or negative, respectively. Among other values, the sign of $\Delta(V^-)$ depends in particular on the relative magnitudes of the viscosity and the capillarity coefficients  $\mu(V^-)$ and $\k(V^-)$ and it is for sure negative for $\mu(V^-)\ll 1$. Let us define
\begin{equation}
\label{defofeta}
\eta(v) := \frac{\mu(v)}{\sqrt{\kappa(v)}} > 0, \qquad \text{for all} \; v > 0.
\end{equation}
This function of the specific volume measures the interplay between diffusion (viscosity) and dispersion (capillarity) effects. Then it is clear that $\Delta(V^-) < 0$ if and only if
\[
\eta(V^-) < \frac{2 V^-\sqrt{f'(V^-)}}{|s|}.
\]
Similarly to \eqref{eq:tang-lin-L}, 
\begin{equation}\label{eq:tang-tilde-L}
    \nu_s(V^+):=(1,\lambda_s(V^+)), \qquad \nu_u(V^+):=(1,\lambda_u(V^+)),
\end{equation}
are the  two eigenvectors of $J(V^+)$ corresponding to the negative (resp.\ positive) eigenvalue $\lambda_s(V^+)$ (resp.\ $\lambda_u(V^+)$) of that matrix.

Finally, notice that 
\begin{equation*}
    \sqrt{\frac{-f'(V^+)}{\k(V^+)}} < \frac{\sqrt{\Delta(V^+)}}2< \left |\frac{s\mu(V^+)}{2\k(V^+)V^+}\right | +\sqrt{\frac{-f'(V^+)}{\k(V^+)}}< \left |\frac{s\mu(V^+)}{2\k(V^+)V^+}\right | + 
    \frac{\sqrt{\Delta(V^+)}}2,
\end{equation*}
which, in view of  $s<0$, readily implies
\begin{equation}\label{eq:magnitude-u-L} 
    \lambda_u(V^+)>\Lambda_u(V^+)>0,
\end{equation}
and 
\begin{equation}\label{eq:magnitude-s-L} 
    \Lambda_s(V^+)<\lambda_s(V^+)<0.
\end{equation}
In view of the expressions \eqref{eq:tang-lin-L}  and \eqref{eq:tang-tilde-L}, the inequalities \eqref{eq:magnitude-s-L} and \eqref{eq:magnitude-u-L} clearly give a  control of the directions of the stable and unstable eigenspaces of the two systems \eqref{eq:V-Q-reduced} and  \eqref{eq:V-Q} and hence the local behavior of the corresponding stable and unstable manifolds close to the saddle point $(V^+,0)$.  This control will be crucial in the existence proof described in the next Sections.

\begin{remark}
\label{remforshock}
In the case of a forward shock (a Lax 2--shock with $s > \lambda_1(W^-) = - \sqrt{-p'(V^-)} > 0$ and $V^+ > V^-$), it is clear that
\begin{equation}
\label{eq:sign-f-L-forward}
    f(V)<0,\ \mbox{for any}\ V\in(V^-,V^+);\qquad f'(V^+) > 0, \ f'(V^-)<0.
\end{equation}
Henceforth, since $f'(V^-) < 0$ we have $\Delta(V^-) > 0$ and the equilibrium point $(V^-,0)$ is a saddle for system \eqref{eq:V-Q}. In contrast, $(V^+,0)$ is an unstable focus if and only if $\Delta(V^+) < 0$ (or equivalently, if and only if $\eta(V^+) < 2V^+ \sqrt{f'(V^+)}/|s| $); otherwise, $(V^+,0)$ is an unstable node if and only if $\Delta(V^+) \geq 0$ (or equivalently, if and only if $\eta(V^+) \leq 2V^+ \sqrt{f'(V^+)}/|s| $).
\end{remark}

\subsection{Existence of an homoclinic loop for the auxiliary system}\label{subsec:lyap-L}
As the second step in the proof of the existence of the traveling wave, we rigorously demonstrate the existence of a closed  homoclinic orbit for the auxiliary system \eqref{eq:V-Q-reduced}  exiting from  the saddle point $(V^+,0)$.
To this aim, we look for an energy  functional of the form
\begin{equation*}
    H(V,Q):=-\int_{V^+}^V\frac{f(z)}{\k(z)}g_1(z)\,dz+ g_2(V)Q^2,
\end{equation*}
where the functions $g_1$, $g_2$ must be properly chosen such that $H$ is conserved along solutions of \eqref{eq:V-Q-reduced}. 
Then, if $(V(y),Q(y))$ solves \eqref{eq:V-Q-reduced}, then we conclude
\begin{align*}
    \frac{d}{dy}H(V(y),Q(y))&=\frac{\partial H}{\partial V}(V,Q)V'(y)+\frac{\partial H}{\partial Q}(V,Q)Q'(y)\\
    &=-\frac{f(V)}{\k(V)}g_1(V)Q+g_2'(V)Q^3\\
        &\quad-2g_2(R)Q\left[\frac{f(V)}{ \k(V)} + \frac{1}{2}  \frac{\k'(V)}{\kappa(V)} Q^2\right].
\end{align*}
Hence, the linear term in $Q$ at the right hand side of the above equality vanishes for $g_1(V)=-2g_2(V)$, while the remaining  one is  zero provided
\begin{equation*}
    g_2'(V)=g_2(V)\frac{\k'(V)}{\kappa(V)},
\end{equation*}
that is $g_2(V)=c\k(V)$,   $c\in\R$. Hence, choosing for definiteness $c=1/2$,  we conclude that
\begin{equation}\label{eq:defH-L}
    H(V,Q):=\int_{V^+}^V f(z)\,dz + \frac{1}{2}\k(V)Q^2,
\end{equation}
satisfies
\begin{equation*}
    \frac{d}{dy}H(V(y),Q(y))=0,
\end{equation*}
for any $y\in\R$, where $(V(y),Q(y))$ solves \eqref{eq:V-Q-reduced}. 
In other words, the orbits of \eqref{eq:V-Q-reduced} are given by
\begin{equation*}
    H(V(y),Q(y))=\bar C
\end{equation*}
for any real constant $\bar C$, and the  homoclinic loop we are looking for is obtained by choosing $\bar C=0$. Indeed, we have already observed (see \eqref{eq:sign-f-L})
that $f$ is negative in $(V^+,V^-)$ and therefore 
\begin{equation*}
     F(V):=\int_{V^+}^Vf(z)\,dz <0,
\end{equation*}
 for any $V\in (V^+,V^-]$.
We claim that there exists a unique $\bar V > V^-$ such that
\begin{equation}\label{eq:FatbarV}
    F(\bar V)=0, 
\end{equation}
and, in particular, $F(V)\geq 0$ if and only if $V\geq\bar V$.
Indeed, for $V \geq V^-$ we rewrite
\begin{equation*}
    F(V)=\int_{V^-}^Vf(z)\,dz+\int_{V^+}^{V^-}f(z) \,dz,
\end{equation*}
that is, the sum of a strictly negative constant
\begin{equation}\label{eq:signFV-}
 F(V^-) =  \int_{V^+}^{V^-}f(z)\,dz <0,
\end{equation}
and the integral function  
\begin{equation*}
    V \mapsto \int_{V^-}^V f(z)\,dz.
\end{equation*}
Recalling that $f(z) = p(z) +s^2 z - C$, with $C$ defined in \eqref{def_C-L} such that $f(V^\pm) =0$, and recalling that $p$, and thus $f$, is convex (see \eqref{cond_pressure-L}), then the above integral function is
positive, continuous,   strictly increasing in $(V^-, \infty)$,
and goes to plus infinity for $V\to\infty$. 
Thus, there exists a unique $\bar V > V^-$ such that \eqref{eq:FatbarV} holds true, proving the claim.

Summarizing, $H(V,Q)=0$ defines the desired homoclinic loop for \eqref{eq:V-Q-reduced} exiting from the saddle point $(V^+,0)$ and passing  through $(\bar V ,0)$.
The implicit relation defining such orbit can be made  explicit for  $V\in [V^+, \bar V]$ via the definitions
\begin{equation*}
    Q^\pm(V):=\pm\sqrt{-\frac{2F(V)}{\k(V)}}.
\end{equation*}
Finally, the stable and unstable manifolds of the saddle point $(V^+,0)$ for \eqref{eq:V-Q-reduced}  coincide in the region $\left\{(V,Q)\in\R^2 \, : \, V\geq V^+\right\}$ with  the closed curve
\begin{equation}\label{eq:def-Gamma-L}
    \Gamma:=\Gamma^+\cup\Gamma^-, \qquad  
    \Gamma^\pm:=\left\{(V,Q)\in\R^2 \, : \, V\in[V^+, \bar V], \, Q=\pm\sqrt{-\frac{2F(V)}{\k(V)}}\right\}.
\end{equation}
\subsection{Existence of traveling waves}
\label{subsec:existence-L}
In the previous Section we constructed the homoclinic loop $H(V,Q)=0$ for the auxiliary system \eqref{eq:V-Q-reduced} exiting from $(V^+,0)$,  where $H$ is defined in \eqref{eq:defH-L}. Let us now define 
\begin{equation*}
    H^-:=\left\{(V,Q)\in\R^2 \, : \, R\in(V^+, \bar V), \, H(V,Q)<0\right\}.
\end{equation*}
Hence, $H^-$ is the region delimited by the closed curve $\Gamma$, defined in \eqref{eq:def-Gamma-L}, or, equivalently, by the curve $H(V,Q)=0$,  and we readily obtain $(V^-,0)\in H^-$ in view of \eqref{eq:signFV-}. The first result needed to prove the existence of the traveling wave is the invariance of $H^-$ for \eqref{eq:V-Q}. The proof is a direct consequence of the choice of $H$ done in \eqref{eq:defH-L}, that is, a conserved energy for the ``non--viscous'' auxiliary system \eqref{eq:V-Q-reduced}.
\begin{lemma}\label{lem:neg-invariant-L}
The region $H^-$ is negatively invariant for the system \eqref{eq:V-Q}, i.e. if $(V(y),Q(y))$ is a solution of \eqref{eq:V-Q} satisfying $(R(y_0),Q(y_0))\in H^-$ for some $y_0\in\R$, then
$(R(y),Q(y))\in H^-$ for any $y\leq y_0$.  
\end{lemma}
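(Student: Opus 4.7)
The plan is to treat $H$, the conserved energy of the auxiliary system \eqref{eq:V-Q-reduced}, as a Lyapunov-type functional for the full system \eqref{eq:V-Q}. The viscous term $-s\mu(V)Q/(V\kappa(V))$ that was dropped to pass from \eqref{eq:V-Q} to \eqref{eq:V-Q-reduced} is the only reason $H$ is no longer conserved, and its sign (dictated by $s<0$ for a backward shock) should give monotonicity along trajectories.

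First I would differentiate $H$ along a trajectory of \eqref{eq:V-Q}. Writing
\[
\frac{d}{dy} H(V(y),Q(y)) = f(V) V' + \tfrac12 \kappa'(V) V' Q^2 + \kappa(V) Q Q',
\]
and substituting $V' = Q$ together with $Q'$ from \eqref{eq:V-Q}, the three terms involving $f(V)$ and $\kappa'(V)Q^2$ cancel exactly (this is precisely the computation that was arranged in \S\ref{subsec:lyap-L} to make $H$ conserved for \eqref{eq:V-Q-reduced}), leaving only the viscous contribution
\[
\frac{d}{dy} H(V(y),Q(y)) = -\,s\,\frac{\mu(V)}{V}\,Q^2.
\]
Since $s<0$, $\mu>0$ and $V>0$, the right-hand side is nonnegative, so $y \mapsto H(V(y),Q(y))$ is nondecreasing.

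Now let $(V(y_0),Q(y_0)) \in H^-$, so $H(V(y_0),Q(y_0))<0$. By the monotonicity just established, for every $y\leq y_0$ in the domain of existence of the solution one has
\[
H(V(y),Q(y)) \leq H(V(y_0),Q(y_0)) < 0.
\]
To upgrade this to $(V(y),Q(y))\in H^-$, I need the additional constraint $V(y)\in(V^+,\bar V)$. Here I would argue by continuity: suppose for contradiction that there exists $y_1 < y_0$ at which $V(y_1) \in \{V^+,\bar V\}$, chosen as the largest such $y_1$. By the explicit form of $H$ and the identities $F(V^+)=F(\bar V)=0$, for any $Q\in\R$
\[
H(V^+,Q) = \tfrac12 \kappa(V^+) Q^2 \geq 0, \qquad H(\bar V,Q) = \tfrac12 \kappa(\bar V) Q^2 \geq 0,
\]
which contradicts the strict inequality $H(V(y_1),Q(y_1)) < 0$ coming from the monotonicity step. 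Hence $V(y)\in(V^+,\bar V)$ for all $y\leq y_0$, and therefore $(V(y),Q(y)) \in H^-$.

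The calculation of $dH/dy$ is the only nontrivial step; the rest is a soft confinement argument exploiting that the closed curve $\Gamma = \{H=0\}\cap\{V\in[V^+,\bar V]\}$ together with the two vertical segments at $V=V^+$ and $V=\bar V$ form a barrier for the sublevel set $\{H<0\}$ inside the strip. I do not anticipate any real obstacle; the computation in the first step is entirely parallel to the one carried out in \S\ref{subsec:lyap-L}, with the viscous term surviving as a sign-definite forcing, which is exactly the mechanism one wants for a Lyapunov-style monotonicity result.
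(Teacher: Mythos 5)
Your proof is correct and follows essentially the same route as the paper: you differentiate $H$ along solutions of \eqref{eq:V-Q}, find that the conservative terms cancel leaving $\frac{d}{dy}H(V(y),Q(y))=-s\,\mu(V)Q^2/V\geq 0$ for $s<0$, and conclude by monotonicity that $H<0$ persists backward in $y$. The only addition is your explicit barrier argument (using $H(V^+,Q)\geq 0$ and $H(\bar V,Q)\geq 0$) showing the trajectory cannot exit the strip $V\in(V^+,\bar V)$, a point the paper's proof leaves implicit; this is a welcome extra detail, not a different method.
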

\begin{proof} As we have already emphasized above, the functional $H$ is constructed so that it is conserved along trajectories of the dynamical system \eqref{eq:V-Q-reduced}. Hence, if we consider now $(V(y),Q(y))$ to be a solution of \eqref{eq:V-Q}, we readily obtain 
\begin{align*}
    \frac{d}{dy}H(V(y),Q(y))&=\frac{\partial H}{\partial V}(V,Q)V'(y)+\frac{\partial H}{\partial Q}(V,Q)Q'(y)\\
    &=f(V)Q+\frac12\ k'(V)Q^3\\
        &\quad +\k(V)Q\left[ - \frac{f(V)}{\k(V)} - \frac{s \mu(V)}{V \k(V)} Q - \frac{1}{2}  \frac{\k'(V)}{\k(V)}Q^2\right]\\
    &=-\frac{s\mu(V)}{V}Q^2\geq0,
\end{align*}
for any $y\in\R$, being $s<0$ for a 1--shock (backward shock).
As a consequence, if $(R(y_0),Q(y_0))\in H^-$ for some $y_0\in\R$, then
\begin{equation*}
    H(R(y),Q(y))\leq H(R(y_0),Q(y_0))<0
\end{equation*}
for any $y\leq y_0$ and the proof is complete.
\end{proof}

The second preliminary result concerns the comparison between the directions of the  stable and unstable manifolds corresponding to the saddle point $(V^+,0)$ for \eqref{eq:V-Q} with those of this point for the reduced system \eqref{eq:V-Q-reduced}, the latter being given by the  homoclinic loop $H(V,Q)=0$ as explained above. 
\begin{lemma}\label{lem:comp-manifolds-L}
The stable (resp.\ unstable) manifold at $(V^+,0)$ for \eqref{eq:V-Q}  is pointing  inside (resp.\ outside) $H^-$.
\end{lemma}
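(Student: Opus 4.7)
The plan is to argue locally at the saddle $(V^+,0)$ and use the slope comparisons \eqref{eq:magnitude-u-L}--\eqref{eq:magnitude-s-L} already established. Recall that, by construction, in the half-plane $\{V \geq V^+\}$ the stable and unstable manifolds of the auxiliary system \eqref{eq:V-Q-reduced} at $(V^+,0)$ coincide with the arcs $\Gamma^-$ and $\Gamma^+$ respectively, and together they bound $H^-$. The idea is to verify that the $C^1$ invariant manifolds of the full system \eqref{eq:V-Q} enter (or leave) a right neighborhood of $V^+$ with slopes that lie strictly between, or strictly beyond, those of $\Gamma^\pm$.

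First I would describe $H^-$ in a right neighborhood of $V^+$. From $\Gamma^\pm(V) = \pm \sqrt{-2F(V)/\k(V)}$ and the expansions $F(V^+)=0$, $F'(V^+)=f(V^+)=0$, $F''(V^+)=f'(V^+)<0$, a direct Taylor expansion yields
\[
\Gamma^\pm(V) = \pm \Lambda_u(V^+)(V-V^+) + o(V-V^+), \qquad V \to V^{+,+},
\]
since $-\Lambda_s(V^+)=\Lambda_u(V^+)=\sqrt{-f'(V^+)/\k(V^+)}$. Hence, in the vertical section at $V$ slightly greater than $V^+$, the region $H^-$ consists exactly of those $Q$ satisfying $\Gamma^-(V) < Q < \Gamma^+(V)$, i.e.\ (to leading order) $|Q| < \Lambda_u(V^+)(V-V^+)$.

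Next, I would apply the stable manifold theorem to \eqref{eq:V-Q} at $(V^+,0)$: the stable (resp.\ unstable) manifold is a $C^1$ curve tangent to $\nu_s=(1,\lambda_s(V^+))$ (resp.\ $\nu_u=(1,\lambda_u(V^+))$) by \eqref{eq:tang-tilde-L}. Its branch in $\{V>V^+\}$ is therefore a graph $Q = \lambda_s(V^+)(V-V^+)+o(V-V^+)$, respectively $Q = \lambda_u(V^+)(V-V^+)+o(V-V^+)$. The conclusion is then a one-line slope comparison based on \eqref{eq:magnitude-s-L} and \eqref{eq:magnitude-u-L}, namely
\[
\Lambda_s(V^+) < \lambda_s(V^+) < 0 < \Lambda_u(V^+) < \lambda_u(V^+).
\]
The first chain places the right branch of the stable manifold strictly between $\Gamma^-$ and $\Gamma^+$, hence inside $H^-$; the last inequality places the right branch of the unstable manifold strictly above $\Gamma^+$, hence outside $H^-$.

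No serious obstacle is expected; the argument is entirely local and relies only on the tangency of each invariant manifold to its corresponding eigenvector together with the strict eigenvalue inequalities already proved. The only mild care needed is to ensure that the $o(V-V^+)$ remainders do not spoil the strict slope comparison, which is automatic from the $C^1$-regularity of the manifolds.
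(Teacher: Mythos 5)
Your argument is correct and is essentially the paper's own proof: the conclusion rests on the slope comparison \eqref{eq:magnitude-s-L}--\eqref{eq:magnitude-u-L} between the eigendirections $\nu_{s,u}(V^+)$ of the full system and $\tau_{s,u}(V^+)$ of the auxiliary system at the saddle, whose stable/unstable manifolds form the boundary $\Gamma$ of $H^-$. Your Taylor expansion of $\Gamma^\pm$ near $V^+$ and the appeal to the $C^1$ stable manifold theorem merely spell out the local tangency comparison that the paper summarizes by referring to Figure \ref{fig:comp-manifolds-L}.
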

\begin{proof}
The geometric properties we need to obtain the stated results are a direct consequence  of the inequalities \eqref{eq:magnitude-u-L} and \eqref{eq:magnitude-s-L},  which in turn compare the tangent vectors $\nu_u(V^+)$ with  $\tau_u(V^+)$ and  $\nu_s(V^+)$ with $\tau_s(V^+)$, defined in \eqref{eq:tang-tilde-L} and \eqref{eq:tang-lin-L}. The relative positions of the two manifolds are  depicted in Figure \ref{fig:comp-manifolds-L}.
\begin{figure}
\centering
\includegraphics{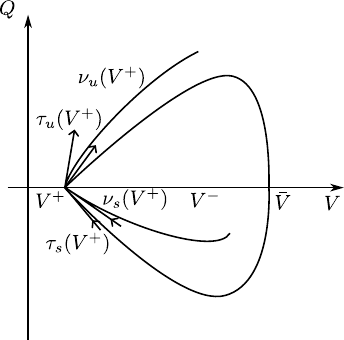}
\caption{Stable and unstable manifolds for \eqref{eq:V-Q} and \eqref{eq:V-Q-reduced} at $(V^+,0)$.}
\label{fig:comp-manifolds-L}
\end{figure}
\end{proof}

We are now able to prove the main result of this Section.
\begin{theorem}
\label{theo:ex-L}
Assume that the end states $(V^\pm,U^\pm)$ and the speed $s$ define a compressive $1$--shock (backward shock) for the $p$--system \eqref{eq:Euler-Lag}. Suppose that the viscosity coefficient $\mu$ and $\k$ are smooth, positive functions of $v>0$ and the pressure verifies \eqref{cond_pressure-L}. Then there exists a unique (up to translations) traveling profile solving \eqref{eq:profiles-end-Lag} and \eqref{eq:profiles-L}. Moreover, this traveling wave 
is oscillating whenever
\begin{equation}
\label{osccondb}
0 < \eta(V^-) < \frac{2 V^-\sqrt{f'(V^-)}}{|s| }.
\end{equation}
\end{theorem}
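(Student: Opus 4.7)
The plan is to realize the profile as a heteroclinic orbit of \eqref{eq:V-Q} joining the unstable equilibrium $(V^-,0)$ to the saddle $(V^+,0)$, extracted as the branch of the one-dimensional stable manifold $W^s(V^+,0)$ which enters $H^-$. As in Section \ref{sec:ex-L}, I will treat only the backward shock case $s<0$; the forward case is analogous by Remark \ref{remforshock}.

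By Lemma \ref{lem:comp-manifolds-L}, exactly one of the two branches of $W^s(V^+,0)$ enters the open region $H^-$. Call this branch $\gamma$, parametrized so that $\gamma(y)\to(V^+,0)$ as $y\to+\infty$. By Lemma \ref{lem:neg-invariant-L}, the region $H^-$ is negatively invariant, so $\gamma(y)\in\overline{H^-}$ for all $y$ in its maximal interval of existence. Since $\overline{H^-}$ is compact and contained in $\{V\geq V^+>0\}$, where the vector field of \eqref{eq:V-Q} is smooth, the orbit extends to all $y\in\R$.

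The central step is to identify the $\alpha$-limit set $\mathcal{A}$ of $\gamma$ (i.e., its limit set as $y\to-\infty$) via LaSalle's invariance principle applied in backward time. From the computation in the proof of Lemma \ref{lem:neg-invariant-L}, $\frac{d}{dy}H(\gamma(y))=-\frac{s\mu(V)}{V}Q^2\geq 0$, with equality only where $Q=0$; in particular $H$ is strictly increasing on every nondegenerate sub-arc of $\gamma$. The set $\mathcal{A}$ is nonempty, compact, connected, and invariant, hence contained in the largest invariant subset of $\{Q=0\}\cap\overline{H^-}=[V^+,\bar V]\times\{0\}$. On this segment the dynamics of \eqref{eq:V-Q} reduces to $Q'=-f(V)/\k(V)$, which vanishes only at the two equilibria $(V^\pm,0)$; therefore $\mathcal{A}$ must be one of them. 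To exclude $\mathcal{A}=\{(V^+,0)\}$, I would invoke the strict monotonicity of $H$: $H(V^+,0)=0$, but $H(\gamma(y))<0$ strictly for every finite $y$ and decreases further in backward time, so $H(\gamma(y))\not\to 0$ as $y\to-\infty$. Hence $\mathcal{A}=\{(V^-,0)\}$, yielding existence. Uniqueness up to translation is immediate, since $W^s(V^+,0)$ is one-dimensional and only the selected branch enters $H^-$.

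For the oscillation claim, note that \eqref{osccondb} is precisely the condition $\Delta(V^-)<0$ from \eqref{eq:discriminant-L}. Under this condition the eigenvalues of $J(V^-)$ are complex conjugates with positive real part (as $s<0$ yields positive trace $-s\mu(V^-)/(\k(V^-)V^-)$, and $f'(V^-)>0$ gives positive determinant), making $(V^-,0)$ an unstable focus in forward time, equivalently a stable focus in backward time. Consequently $\gamma$ spirals around $(V^-,0)$ as $y\to-\infty$, producing the stated oscillations of $V$ about $V^-$. The main technical subtlety I anticipate is the clean ruling-out of the saddle $(V^+,0)$ as the $\alpha$-limit: the non-strict inequality $dH/dy\geq 0$ alone admits both equilibria as candidates, and the strict monotonicity on nondegenerate sub-arcs must be carefully leveraged to exclude a homoclinic-to-$(V^+,0)$ scenario for the full system.
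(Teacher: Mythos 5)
Your proposal is correct and follows essentially the same route as the paper: the branch of the stable manifold of the saddle $(V^+,0)$ entering $H^-$ (Lemma \ref{lem:comp-manifolds-L}), trapped there by the negative invariance of $H^-$ (Lemma \ref{lem:neg-invariant-L}), with LaSalle's invariance principle in backward time identifying the $\alpha$-limit among the equilibria on $\{Q=0\}$, and the oscillation criterion \eqref{osccondb} read off from $\Delta(V^-)<0$ in Section \ref{subseclinfull}. The only (minor) variation is your exclusion of $(V^+,0)$ as the $\alpha$-limit via the energy bound $H(\gamma(y))\leq H(\gamma(y_0))<0=H(V^+,0)$ backward in time, which is a valid and arguably more direct substitute for the paper's argument that a backward approach to the saddle would have to occur along its unstable manifold, which points outside $H^-$.
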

\begin{proof}
First of all, we claim that that the stable manifold of $(V^+,0)$ for \eqref{eq:V-Q} is entirely contained inside $H^-$. Indeed,  we know that the point $(V(y),Q(y))$ on the stable manifold belongs to that set for any $y\geq y_0$, for $y_0$ sufficiently big, thanks to  Lemma \ref{lem:comp-manifolds-L}. Then, we can follow this trajectory backward for $y\geq y_0$ and, using this time Lemma \ref{lem:neg-invariant-L}, we conclude. 

As a consequence,  the desired result is clearly obtained provided that, along this trajectory, we have
\begin{equation*}
    \lim_{y\to-\infty}(V(y), Q(y))=(V^-,0),
\end{equation*}
which gives an heteroclinic connection for \eqref{eq:V-Q} joining the equilibria  $(V^-,0)$ and $(V^+,0)$.
This last step will be  obtained via an appropriate  Lyapunov functional and using the classical LaSalle's invariance principle (cf. \cite{Teschl12}). For this, let us consider the following functional
\begin{equation*}
L(V,Q):= H(V,Q) - F(V^-),
\end{equation*}
and let $(V(y), Q(y))$ be an orbit of \eqref{eq:V-Q}. Then, $L(R^-,0) = 0$ and, inverting the direction along the orbit,  we have 
\begin{equation*}
\frac{d}{dy} L(V(-y), Q(-y))=  - \frac{d}{d\tilde y}H(V(\tilde y),Q(\tilde y)) \leq 0,
\end{equation*}
namely,   $L$  is a Lyapunov functional for that system in the ``reverse dynamics'' $-y$. Since the set of points where the above derivative vanish is given by $\{Q=0\}$, and since the trajectory is confined in $H^-$, the LaSalle's invariance principle implies that any solution $(V(-y),Q(-y))$ in $\overline{H^-}$ shall converge as $-y\to \infty$ (equivalently, as $y\to -\infty$)  to the largest invariant subset of $\overline{H^-}\cap\{Q=0\}$, which is the set of two steady--states $(V^+,0)$ and $(V^-,0)$. Finally,  as proven before, $(V^+,0)$ can not be reached for $-y\to \infty$ (equivalently, $y\to -\infty$)  by orbits inside $\overline{H^-}$, because this should occur along the unstable manifold of this saddle point, and the latter  is excluded by Lemma \ref{lem:comp-manifolds-L}.  Hence, we conclude that the trajectory under examination   exiting along the stable manifold  of $(V^+,0)$  converges as $y\to -\infty$ to the stable point $(V^-,0)$, proving the existence of the  heteroclinic connection and, in turn, of the desired traveling profile. Finally, it is clear from the calculations of Section \ref{subseclinfull} that condition \eqref{osccondb} holds if and only if the equilibrium point $(V^-,0)$ is an unstable focus and the profile is oscillating. Otherwise, $(V^-,0)$ is an unstable node and the profiles are monotone. The Theorem is now proved.
\end{proof}

A symmetric result in the case when the triplet $(V^\pm,U^\pm, s)$ defines a compressive $2$--forward shock is also true.

\begin{theorem}\label{theo:ex-2-L}
    Assume that the end states $(V^\pm,U^\pm)$ and the speed $s$ define a compressive $2$--shock (forward shock) for the $p$--system \eqref{eq:Euler-Lag}. Suppose that the viscosity and capillarity coefficients, $\mu$ and $\k$, are smooth, positive functions of $v>0$ and that the pressure verifies \eqref{cond_pressure-L}. Then, there exists a unique (up to translations) traveling profile solving \eqref{eq:profiles-end-Lag} and \eqref{eq:profiles-L}. Moreover, this traveling wave 
is oscillating whenever
\begin{equation}
\label{osccondf}
0 < \eta(V^+) < \frac{2 V^+\sqrt{f'(V^+)}}{|s|}.
\end{equation}       
\end{theorem}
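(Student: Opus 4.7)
The plan is to mirror the argument of Theorem \ref{theo:ex-L}: the two equilibria exchange roles and the direction of the dynamics is reversed. Under a compressive $2$-shock we have $s>0$, $V^+>V^-$, $f<0$ on $(V^-,V^+)$ and, by Remark \ref{remforshock}, $f'(V^-)<0$, $f'(V^+)>0$. The eigenvalue computations of Sections \ref{subsec:DynSystLinear-L} and \ref{subseclinfull} then show that $(V^-,0)$ is a saddle for both \eqref{eq:V-Q-reduced} and \eqref{eq:V-Q}, while $(V^+,0)$ is a centre for the auxiliary system and an asymptotically stable focus or node for the full system, with the dichotomy controlled by the sign of $\Delta(V^+)$ and hence by condition \eqref{osccondf}.

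First I would reconstruct the homoclinic loop of the auxiliary system, this time based at the saddle $(V^-,0)$. Using the conserved energy
\[
H(V,Q) := \int_{V^-}^V f(z)\,dz + \tfrac{1}{2}\,\k(V)\, Q^2,
\]
the integral $F(V)=\int_{V^-}^V f(z)\,dz$ is negative on $(V^-,V^+]$, strictly increasing on $(V^+,\infty)$, and tends to $+\infty$ as $V\to\infty$, so there exists a unique $\bar V>V^+$ with $F(\bar V)=0$. The level set $\{H=0\}$ restricted to $V\in[V^-,\bar V]$ is a closed loop $\Gamma$ homoclinic to $(V^-,0)$ enclosing $(V^+,0)$, and it bounds the region $H^-=\{H<0\}$.

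Next I would prove the forward-time analogues of Lemmas \ref{lem:neg-invariant-L} and \ref{lem:comp-manifolds-L}. The same algebraic identity used in Lemma \ref{lem:neg-invariant-L} yields $\frac{d}{dy} H(V(y),Q(y)) = -\frac{s\mu(V)}{V} Q^2 \leq 0$ along \eqref{eq:V-Q}, now because $s>0$, so $H^-$ is \emph{positively} invariant for the full system. The eigenvalue comparison at the saddle $(V^-,0)$ is the mirror of the one in Section \ref{subseclinfull}: writing $a=\frac{s\mu(V^-)}{2\k(V^-)V^-}>0$ and $b=\sqrt{-f'(V^-)/\k(V^-)}>0$, a short computation gives $\lambda_u(V^-)=-a+\sqrt{a^2+b^2}<b=\Lambda_u(V^-)$ and, similarly, $\lambda_s(V^-)<\Lambda_s(V^-)<0$. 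These inequalities say precisely that the unstable eigendirection of $J(V^-)$ lies strictly inside the cone bounded near $(V^-,0)$ by the two branches of $\Gamma$, so the unstable manifold of $(V^-,0)$ for the full system enters $\overline{H^-}$.

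The proof is concluded by LaSalle's invariance principle applied to the forward flow: the functional $L(V,Q) := H(V,Q)-F(V^+)$ is non-negative on $\overline{H^-}$ (since $F$ attains its minimum on $[V^-,\bar V]$ precisely at $V^+$), vanishes at $(V^+,0)$, and satisfies $\frac{d}{dy} L \leq 0$ along \eqref{eq:V-Q}, with equality only on $\{Q=0\}$. The largest invariant subset of $\overline{H^-}\cap\{Q=0\}$ is $\{(V^-,0),(V^+,0)\}$, and the branch of the unstable manifold of $(V^-,0)$ entering $H^-$ stays in $\overline{H^-}$ by positive invariance; it cannot return to $(V^-,0)$ along a stable direction, so it must converge to $(V^+,0)$ as $y\to+\infty$. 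This yields the required heteroclinic orbit and hence the unique (up to translations) traveling profile; the monotone versus oscillating alternative at $+\infty$ follows directly from the sign of $\Delta(V^+)$, i.e.\ from condition \eqref{osccondf}, exactly as in Theorem \ref{theo:ex-L}. The only genuine subtlety — more bookkeeping than an obstacle — is consistently flipping the invariance (negative to positive) and the time direction in LaSalle's principle induced by the change $s<0\rightsquigarrow s>0$.
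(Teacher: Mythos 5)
Your proposal is correct and follows essentially the same route as the paper, which proves this case by mirroring Theorem \ref{theo:ex-L}: exchanging the roles of $(V^-,0)$ and $(V^+,0)$, noting that $H^-$ is now positively invariant since $s>0$, reusing the eigendirection comparison at the saddle (now $(V^-,0)$), and applying LaSalle's principle to the forward dynamics, with \eqref{osccondf} equivalent to $\Delta(V^+)<0$. Your explicit computation $\lambda_u(V^-)=-a+\sqrt{a^2+b^2}<b=\Lambda_u(V^-)$ and the choice $L=H-F(V^+)$ are exactly the mirrored ingredients the paper leaves implicit in its sketch.
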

The proof of this result is analogous to the previous one; we sketch here only the main differences. In Figure \ref{fig:comp-manifolds-L}, the points $(V^-,0)$ and  $(V^+,0)$ are exchanged (remember in this case $V^- < V^+$), and the region $H^-$ is positively invariant for \eqref{eq:V-Q}, being $s>0$. Then, Lemma \ref{lem:comp-manifolds-L} is still valid, clearly referring to the saddle points  $(V^-,0)$ for the complete and reduced systems. Finally, the existence of the  heteroclinic connection is again a direct consequence of the existence of a Lyapunov functional and  the LaSalle's invariance principle, this time for the ``forward dynamics'' of \eqref{eq:V-Q}. Like in the previous case, condition \eqref{osccondf} holds if and only if $(V^+,0)$ is an unstable focus and the profiles are oscillating (see Remark \ref{remforshock}).
\begin{remark}
\label{rem:osc-L}
    It is worth underlying once again that the above existence results include the case of non monotone profiles, which is clearly the case when the equilibrium point inside the region $H^-$, namely, $(V^-,0)$ for compressive $1$--shocks (backward shocks),  $(V^+,0)$ for compressive $2$--shocks (forward shocks),  is a stable focus; see  
    \eqref{osccondb} and \eqref{osccondf}.
    These conditions can not be verified for sufficiently small shocks, since in that case $|f'(V^\pm)|\ll 1$ and the point inside the region is a stable node. The detailed calculations, and also  the proof of the monotonicity of the profile for small shocks, together with other relevant properties, are the content of Section \ref{sec:properties-small-profiles-L}.
\end{remark}

\subsection{Numerical calculation of the profiles}
\label{secnumerics}
For illustrative purposes, we present the numerical calculation of a viscous-dispersive shock profile for system \eqref{eq:NSK-Lag}. For concreteness, we consider a viscous-capillar compressible fluid endowed with the following nonlinear pressure, viscosity and capillarity coefficients,
\begin{equation}
\label{numexample}
p(v) = \frac{1}{v^{5/3}}, \qquad \mu(v) = \frac{1.2}{v^2}, \qquad \kappa(v) = \frac{10}{v^7},
\end{equation}
defined for all positive values of the specific volume $v \geq \delta > 0$, with some uniform $\delta > 0$. These choices correspond to an adiabatic $\gamma$-gas law with $\gamma = 5/3 > 1$ and encompass typical functions of algebraic form for the viscosity and capillarity coefficients which can be found in the literature (see, for example, \cite{CCD15}). The end states are taken as
\[
V^+=1, \quad V^- = 1.5,
\]
so that the shock speed can be computed through \eqref{eq:RH-rewr-L} with $s <  \lambda_1(W^+) = - \sqrt{p(V^+)} < 0$, that is, a compressive $1$-shock. Hence, $s \approx -0.9912 <0$. With these parameter values we may compute 
\[
\Delta(V^-) = \left[\frac{s\mu(V^-)}{\k(V^-)V^-}\right]^2-\frac{4f'(V^-)}{\k(V^-)} \approx - 1.3062 < 0,
\]
implying that the equilibrium point $(V^-,0)$ is an unstable focus and, consequently, the viscous-dispersive shock profile is oscillatory. The initial condition at $x = 0$ is set as 
\[
(V(0),Q(0)) = (V^+,0) + \epsilon (1, \lambda_s(V^+)),
\]
with $\epsilon = 0.000001$, so that it belongs to the stable manifold at the end state $V^+$. The heteroclinic trajectory of the planar system \eqref{eq:V-Q} connecting $(V^-,Q^-) = (1.5,0)$ with $(V^+,Q^+) = (1,0)$ can be computed with a standard explicit Euler time-integrator. 
Figure \ref{fig:phasespace} shows the heteroclinic trajectory in the phase $(V,Q)$-space in red color.
\begin{figure}[t]
\centering
\includegraphics[scale=.5, clip=true]{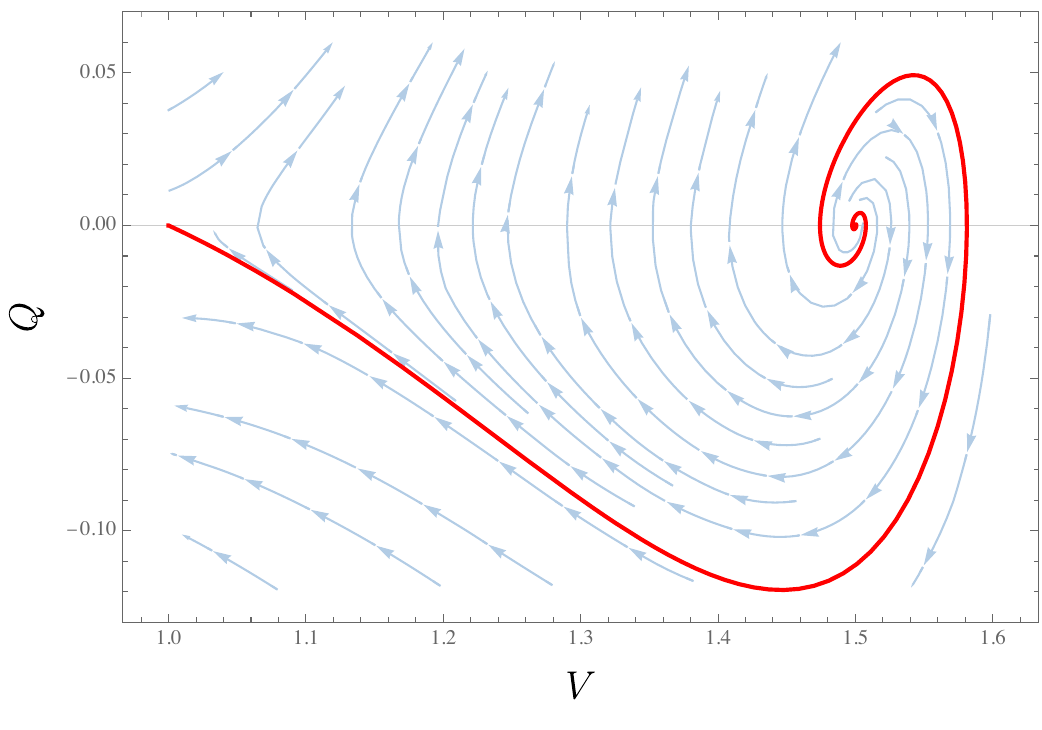}
\caption{\small{Plot of the flow of the planar system \eqref{eq:V-Q} in the phase space. The viscous-dispersive shock profile corresponds to the heteroclinc trajectory in red color. The flow of the system in the $(V,Q)$-space is represented in light blue color (color online).}}
\label{fig:phasespace}
\end{figure}
The flow of system \eqref{eq:V-Q} is depicted in light blue color. The curve leaves the equilibrium $(V^-,Q^-) = (1.5, 0)$ along the unstable manifold at that point and reaches the second equilibrium, namely $(V^+,Q^+) = (1,0)$, along its stable manifold. Notice that this is an oscillatory, non-monotone, viscous-dispersive shock profile. The profile function for the specific volume, $x \mapsto V(x)$, is depicted in Figure \ref{fig:Vprofile} in red color. The non-monotonicty and oscillatory properties of the profile are manifest in this case.

\begin{figure}[t]
\centering
\includegraphics[scale=.5, clip=true]{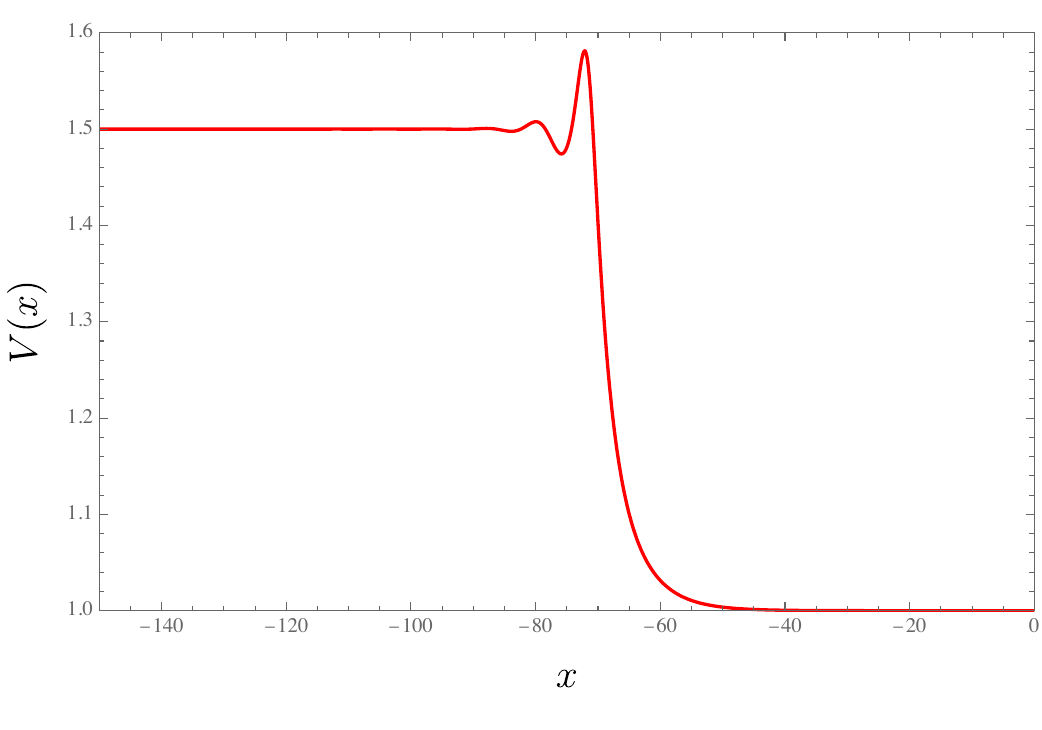}
\caption{\small{Profile $x \mapsto V(x)$ (in red color) for the specific volume of the viscous-dispersive shock profile (color online).}}
\label{fig:Vprofile}
\end{figure}

\section{Further properties of viscous-dispersive shock profiles}
\label{sec:properties-small-profiles-L}

In this Section we analyze the properties of the profiles constructed in the previous one in detail and in the particular case of small shocks, that is for $|V^+ - V^-|$ (and hence $|U^+ - U^-|$ as well; see \eqref{RH1-L}),  sufficiently small. To fix ideas, let us consider the case of a Lax 1--shock (backward shock), that is 
\begin{equation*}
    \lambda_1(W^+) < s < \lambda_1(W^-)<0,
\end{equation*}
which implies $ V^- > V^+$ (see \eqref{cond_1_shock-L}-\eqref{eq:V->V+1shock-L}); the (symmetric) case of a Lax 2--shock is analogous and will be stated at the end of the Section (see Theorem \ref{theo:monotone-2-L} below). In what follows, we outline the main properties of the profiles for $0<\varepsilon \ll 1$, $\varepsilon := V^- - V^+$.

The first result we prove states that, for small Lax 1--shocks, the profile $V$ is (strictly) monotone decreasing. In turn, from \eqref{eq:profiles-L} we also conclude
\begin{equation*}
    U'  = -sV'<0.
\end{equation*}
The above property for $V$ is clearly in agreement with the linear analysis carried out in Section \ref{subsec:DynSystLinear-L} for the dynamical system \eqref{eq:V-Q}. Indeed, since
\begin{equation*}
    f'(V) = p'(V)+s^2,
\end{equation*}
from \eqref{eq:RH-rewr-L}, it follows that
\begin{equation*}
    f'(V^-) = p'(V^-)+\frac{p(V^-)-p(V^+)}{V^+-V^-}.
\end{equation*}
Hence, if $V^--V^+=\e$ with $0<\e\ll1$, then we conclude $0< f'(V^-) = O(\varepsilon)$
and the discriminant $\Delta(V^-)$ given by  \eqref{eq:discriminant-L} is positive.
As a consequence, $(V^-, 0)$ is an unstable node, which is a necessary condition for $V$ to be monotone decreasing. 
\begin{theorem}\label{theo:monotone-L}
    Assume  the end states $(V^\pm,U^\pm)$ and the speed $s$ define a  $1$--shock (backward shock) for the $p$--system \eqref{eq:Euler-Lag} and assume its amplitude is sufficiently small: $0< \varepsilon = V^- - V^+ \ll 1$.  Then, the profile $V$ constructed in Theorem \ref{theo:ex-L} is monotone decreasing and it satisfies the following properties for any $y\in\mathbb{R}$:
    \begin{align}\label{eq:estprof-L}
    &-C\e^2<V'(y)<0,\\
    & |V''(y)|\leq C\e |V'(y)|,\label{eq:estprof-2-L}\\
    & |V'''(y)|\leq C\e^2 |V'(y)|,\label{eq:estprof-3-L}
\end{align}
where $C$ is a positive constant independent from $\varepsilon$.
\end{theorem}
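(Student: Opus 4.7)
The plan is to exploit the singular perturbation structure underlying the profile system \eqref{eq:V-Q} for small shock amplitudes $\varepsilon = V^- - V^+$. Motivated by Remark \ref{rem:osc-L}, the expected scaling is $V - V^+ \sim \varepsilon$, $V'(y) \sim \varepsilon^2$ and $y \sim \varepsilon^{-1}$, so I would first introduce the rescaling $z = \varepsilon y$ and $V(y) = V^+ + \varepsilon w(z)$, with asymptotic conditions $w(-\infty) = 1$ and $w(+\infty) = 0$, under which $V^{(k)}(y) = \varepsilon^{k+1} w^{(k)}(z)$ for $k=1,2,3$. A direct Taylor expansion combined with the Rankine--Hugoniot relation \eqref{eq:RH-rewr-L} yields $f'(V^+) = -\tfrac{1}{2}p''(V^+)\varepsilon + O(\varepsilon^2)$, and hence
\[
f(V^+ + \varepsilon w) = \tfrac{1}{2}\, p''(V^+)\,\varepsilon^2\, w(w-1) + O(\varepsilon^3).
\]
Substituting into the integrated second-order profile equation derived from \eqref{eq:V-Q} and dividing through by $\varepsilon^2$, one arrives at the singularly perturbed ODE
\[
\varepsilon\, \kappa(V^+)\, w'' + \frac{s\,\mu(V^+)}{V^+}\, w' + \tfrac{1}{2}p''(V^+)\, w(w-1) = O(\varepsilon).
\]

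The formally reduced ($\varepsilon = 0$) equation is the first-order Burgers-type ODE
\[
w_0' = -\frac{V^+\, p''(V^+)}{2\,|s|\,\mu(V^+)}\, w_0(1 - w_0),
\]
whose unique heteroclinic between $w_0(-\infty) = 1$ and $w_0(+\infty) = 0$ is the explicit logistic function, which is smooth, strictly monotone decreasing, with bounded derivatives of all orders decaying exponentially to zero at $\pm\infty$. The central technical step is then to show that the true rescaled profile $w_\varepsilon$ produced by Theorem \ref{theo:ex-L} converges to $w_0$ in $C^3(\mathbb{R})$ as $\varepsilon \to 0^+$ (after fixing the translational degree of freedom). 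I would carry out this step via geometric singular perturbation theory \`a la Fenichel applied to the slow--fast system $\{w_z = q,\ \varepsilon q_z = -f^*(w)/\kappa(V^+) - (s\mu(V^+)/(V^+\kappa(V^+)))\,q\} + O(\varepsilon)$, whose critical manifold is $q = -V^+ f^*(w)/(s\mu(V^+))$; alternatively, via an implicit function theorem argument in a suitable weighted Banach space. This is the main obstacle I anticipate, because the critical manifold is normally hyperbolic but \emph{repelling} in the fast direction (since $-s\mu/(V\kappa) > 0$ for $s<0$), so Fenichel's theorem must be applied to the time-reversed flow, or equivalently, the one-dimensional stable manifold of the saddle $(V^+, 0)$ must be traced backwards from a small neighborhood of that point until it reaches $(V^-, 0)$.

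Granted the convergence $w_\varepsilon \to w_0$ in $C^3$, the monotonicity of $V$ and bound \eqref{eq:estprof-L} are immediate: since $w_\varepsilon' < 0$ uniformly on $\mathbb{R}$ and $|w_\varepsilon'| \leq C$ for $\varepsilon$ small enough, one has $V'(y) = \varepsilon^2 w_\varepsilon'(\varepsilon y)$, yielding $-C\varepsilon^2 < V'(y) < 0$. For the higher-order bounds \eqref{eq:estprof-2-L} and \eqref{eq:estprof-3-L}, the just-established monotonicity of $V$ allows one to write $V'(y) = \Psi(V(y))$ along the orbit, where $\Psi(V) = f(V)V/(|s|\mu(V)) + O(\varepsilon^3)$ parametrizes the slow manifold in the original $(V,Q)$ variables. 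Chain-rule differentiation then gives
\[
V''(y) = \Psi'(V)\, V'(y), \qquad V'''(y) = [\,\Psi''(V)\, V'(y) + (\Psi'(V))^2\,]\, V'(y).
\]
Since $f'(V) = p'(V) + s^2 = O(\varepsilon)$ uniformly on $[V^+, V^-]$ (by the mean value theorem and \eqref{eq:RH-rewr-L}) while $f''(V) = p''(V)$ remains bounded, a short computation shows $\Psi'(V) = O(\varepsilon)$ and $\Psi''(V) = O(1)$, from which $|V''(y)| \leq C\varepsilon|V'(y)|$ and $|V'''(y)| \leq C\varepsilon^2|V'(y)|$ follow at once, completing the plan.
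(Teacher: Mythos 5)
Your overall strategy is the one the paper itself uses: rescale $V=V^\pm+\e\,(\cdot)(\e y)$, expand $f$ to get the quadratic nonlinearity $\tfrac12 p''\,\e^2 w(w-1)+O(\e^3)$, view \eqref{eq:V-Q} as a slow--fast system, invoke Fenichel to obtain a one-dimensional slow manifold on which the dynamics reduce to a Burgers/logistic-type scalar equation (the paper's \eqref{eq:ODE-slow-L}), and then obtain \eqref{eq:estprof-2-L}--\eqref{eq:estprof-3-L} by differentiating the graph representation of the orbit (your $V'=\Psi(V)$ is the paper's $w=h_\e(\tilde z)$, and your chain-rule computation is exactly theirs). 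The worry about the repelling fast direction is harmless: normal hyperbolicity is all Fenichel needs, and the identification of the constructed profile with the slow-manifold connection comes from the uniqueness up to translation in Theorem \ref{theo:ex-L}.

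There is, however, one step that fails as written: you deduce strict monotonicity, i.e.\ $w_\e'<0$ on all of $\R$ (hence $V'(y)<0$ for every $y$, which is part of \eqref{eq:estprof-L}), from convergence $w_\e\to w_0$ in $C^3(\R)$. Since $w_0'\to 0$ at $\pm\infty$, uniform $C^1$ (or $C^3$) closeness cannot exclude that $w_\e'$ vanishes or changes sign in the tails; the same objection shows that the \emph{relative} bounds \eqref{eq:estprof-2-L}--\eqref{eq:estprof-3-L} could never come from $C^m$ convergence alone (you in fact do not use it for those, but then the $C^3$ convergence step buys you nothing and leaves the monotonicity claim unsupported, and your later graph representation $V'=\Psi(V)$ is itself justified by that monotonicity, so the argument as ordered is circular). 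The fix is already in your own framework and is what the paper does: once the profile is known to lie on the one-dimensional slow manifold, it solves the scalar first-order equation $\dot{\tilde z}=C(V^-)\tilde z(1+\tilde z)+\e h_1(\tilde z,\e)$, and a heteroclinic of a scalar ODE joining two consecutive equilibria is automatically strictly monotone, with $|\dot{\tilde z}|$ uniformly bounded; this yields \eqref{eq:estprof-L} directly, legitimizes the graph representation, and then your chain-rule estimates give \eqref{eq:estprof-2-L} and \eqref{eq:estprof-3-L}. With the monotonicity argument rerouted through the reduced scalar equation (and the $C^3$-convergence intermediate step dropped), your proof matches the paper's.
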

\begin{proof}   
By using the definition of $f$ in \eqref{eq:def-f-Lag} and \eqref{eq:RH-rewr-L}-\eqref{def_C-L}, we obtain
\[
    f(V)=p(V)-p(V^-)+s^2(V-V^-)=p(V)-p(V^-)+\frac{p(V^-)-p(V^+)}{V^+-V^-}(V-V^-).
\]
Hence, if $V^+=V^--\varepsilon$, one has
\begin{equation*}
    f(V)=p(V)-p(V^-)+\frac{p(V^--\e)-p(V^-)}{\e}(V-V^-).
\end{equation*}
Set
\begin{equation}\label{eq:def-z-tildez-L}
    V(y)= : \e z(\e y)+\frac12(V^-+V^+),\qquad 
    \tilde z(x): =z(x)-\frac12,
\end{equation}
so that $V(y)=V^-+\e\tilde z(\e y)$ and $V'(y)=\e^2 \dot{\tilde z}(\e y)$, where ``$\dot{\;\;}$" denotes the derivative with respect to the variable $x:=\e y$.
Notice that $V(y) \to V^\pm$ for $y\to\pm\infty$ becomes
\begin{equation*}
    \lim_{x\to\pm\infty} z(x)=\mp\frac12.
\end{equation*}
In the new independent variable $x=\e y$ \eqref{eq:V-Q} rewrites as the following first order system
\begin{equation}\label{eq:z-w-L}
    \begin{cases}
        \dot{\tilde z} = w,\\
        \e \dot w = \tilde{F}(\tilde z,w,\e),
    \end{cases}     
\end{equation}
where 
\begin{align*}
    \tilde{F}(\tilde z,w,\e):=&-\frac{ \tilde f(\e)}{\e^2\k(V^-+\e\tilde z)} + \frac{\mu(V^-+\e\tilde z)}{(V^-+\e\tilde z)\k(V^-+\e\tilde z)}\sqrt{\frac{p(V^--\e)-p(V^-)}{\e}} w \\
    &- \frac{\e^2}{2} \left (\frac{\k'(V^-+\e\tilde z)}{\k(V^-+\e\tilde z)}\right ) w^2,
\end{align*}
with
\begin{equation*}
    \tilde f(\e):=f(V^-+\e\tilde z)=p(V^-+\e\tilde z)-p(V^-)+[p(V^--\e)-p(V^-)]\tilde z.
\end{equation*} 
Notice that $\tilde f(0)=0$ and 
\begin{equation*}
   \frac{d}{d\e} \tilde f(\e)\bigg |_{\e=0}=p'(V^-+\e\tilde z)\tilde z-p'(V^--\e)\tilde z\bigg |_{\e=0} = 0.
\end{equation*}
which implies $\tilde f'(0)=0$.
Moreover, a second differentiation gives
\begin{align*}
     \frac{d^2}{d\e^2}{f}''(\e)\bigg |_{\e=0}=p''(V^-)\tilde{z}^2+p''(V^-)\tilde z.
\end{align*}
As a consequence, the Taylor expansion of $\tilde f$ at zero is given by
\begin{equation}\label{eq:tildef-eps-L}
    \tilde f(\e)=\frac12p''(V^-)\tilde z(1+\tilde z)\e^2+\mathcal{O}(\e^3).
\end{equation}
Upon substitution of \eqref{eq:tildef-eps-L} in the definition of $\tilde F$, we deduce
\begin{equation*}
    \tilde{F}(\tilde z,w,0):=-\frac{\tilde z(1+\tilde z)p''(V^-)}{2\k(V^-)}+\frac{\mu(V^-)\sqrt{-p'(V^-)}}{V^-\k(V^-)} w.
\end{equation*}
Therefore, by following the classical geometric singular perturbation theory (see, for instance, \cite{J95}), we conclude that the manifold $\mathcal{M}_0 := \left \{ \tilde{F}(\tilde z,w,0)=0\right \} $ is normally hyperbolic, and, for $\varepsilon\ll 1$, there exists 
 the slow manifold of \eqref{eq:z-w-L}, lying close to  $\mathcal{M}_0$, and it is given by 
\begin{equation*}
    \mathcal{M}_\e:=\left\{(\tilde z,w)\, : \, w=h_\e(\tilde z)\right\},     
\end{equation*}
where
\begin{equation*}
    h_\e(\tilde z):=h_0(\tilde z)+\e h_1(\tilde z,\e), \qquad \quad  h_0(\tilde z):=\frac{\tilde z(1+\tilde z)p''(V^-)V^-}{2\mu(V^-)\sqrt{-p'(V^-)}},
\end{equation*}
and $h_1$ is a smooth function.
Hence, the equation on the slow manifold becomes
\begin{equation}\label{eq:ODE-slow-L}
    \dot{\tilde z}=C(V^-)\tilde z(1+\tilde z)+\e h_1(\tilde z,\e),
\end{equation}
where
\begin{equation*}
    C(V^-):=\frac{p''(V^-)V^-}{2\mu(V^-)\sqrt{-p'(V^-)}}>0.
\end{equation*}
The leading order equation in \eqref{eq:ODE-slow-L} is given by 
\begin{equation*}
    \dot{\tilde z}=C(V^-)\tilde z(1+\tilde z),
\end{equation*}
and since $C(V^-)>0$, if $\e$ is sufficiently small, then $\tilde z$ is monotone decreasing and satisfies
\begin{equation*}
    \lim_{x\to-\infty} \tilde z(x)=0, \qquad \qquad  \lim_{x\to\infty} \tilde z(x)=-1.
\end{equation*}
As a consequence, from \eqref{eq:def-z-tildez-L} it follows that the profile $V$ is monotone decreasing and, since $V'(y)=\e^2\dot{\tilde z}(\e y)$, we end up with   \eqref{eq:estprof-L}. 
Moreover, since
\begin{equation*}
    |\ddot{\tilde z}|=\left|h_0'(\tilde z)+\e\partial_{\tilde z}h_1(\tilde z,\e)\right||\dot{\tilde z}|\leq C|\dot{\tilde z}|,
\end{equation*}
for some $C>0$, one has \eqref{eq:estprof-2-L}.
Similarly, since
\begin{equation*}
    |\dddot{\tilde z}|=\left|h_0''(\tilde z)+\e\partial_{\tilde z\tilde z}h_1(\tilde z,\e)\right|\dot{\tilde z}^2+\left|h_0'(\tilde z)+\e\partial_{\tilde z}h_1(\tilde z,\e)\right||\ddot{\tilde z}|\leq C|\dot{\tilde z}|,
\end{equation*}
and $V'''(y)=\e^4\dddot{\tilde z}(\e y)$, we obtain \eqref{eq:estprof-3-L} and the proof is complete.
\end{proof}

The symmetric case of a small $2$--shock (forward shock) is analogous; the details of the proof are left to the reader.
\begin{theorem}\label{theo:monotone-2-L}
    Assume  the end states $(V^\pm,U^\pm)$ and the speed $s$ define a  $2$--shock (forward shock) for the $p$--system \eqref{eq:Euler-Lag} and assume its amplitude is sufficiently small: $0< \varepsilon = V^+ - V^+ \ll 1$.  Then, the profile $V$ constructed in Theorem \ref{theo:ex-2-L} is monotone increasing and it satisfies the following properties for any $y\in\mathbb{R}$:
    \begin{align*}
    &0<V'(y)<C\e^2,\\
    & |V''(y)|\leq C\e |V'(y)|,\\
     & |V'''(y)|\leq C\e^2 |V'(y)|,
\end{align*}
where $C$ is a positive constant independent from $\varepsilon$.
\end{theorem}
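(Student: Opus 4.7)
My plan is to mirror the proof of Theorem \ref{theo:monotone-L}, adapting the rescaling and the sign bookkeeping to the forward shock case and then invoking Fenichel's geometric singular perturbation theorem to produce a slow manifold whose leading order reduction is a logistic-type equation. With $V^+ = V^- + \varepsilon$ and $s > 0$, I would introduce the rescaling
\[
V(y) = V^- + \varepsilon \tilde z(\varepsilon y), \qquad w(x) = \dot{\tilde z}(x), \qquad x = \varepsilon y,
\]
so that $V'(y) = \varepsilon^2 w(\varepsilon y)$ and the end states correspond to $\tilde z(-\infty) = 0$ and $\tilde z(+\infty) = 1$. Plugging into \eqref{eq:V-Q}, dividing the $Q$-equation by $\varepsilon^2$, and Taylor-expanding $f$ as in Theorem \ref{theo:monotone-L} would yield
\[
\tilde f(\varepsilon) := f(V^- + \varepsilon \tilde z) = \tfrac{1}{2} p''(V^-) \, \tilde z(\tilde z - 1)\, \varepsilon^2 + O(\varepsilon^3),
\]
which is non-positive on $[0,1]$, consistent with \eqref{eq:sign-f-L-forward}. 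This brings the problem to the Fenichel form $\dot{\tilde z} = w$, $\varepsilon \dot w = \tilde F(\tilde z, w, \varepsilon)$, with leading reduced equation
\[
\tilde F(\tilde z, w, 0) = -\frac{p''(V^-) \tilde z(\tilde z - 1)}{2 \k(V^-)} - \frac{\mu(V^-) \sqrt{-p'(V^-)}}{V^- \k(V^-)}\, w,
\]
where the $w$-coefficient is opposite in sign to its counterpart in Theorem \ref{theo:monotone-L}, precisely because $s > 0$ for a forward shock.

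Next, I would invoke Fenichel's theorem applied to the normally hyperbolic critical manifold $\mathcal{M}_0 = \{\tilde F(\cdot,\cdot,0) = 0\}$ to obtain, for $\varepsilon$ small, a slow manifold $\mathcal{M}_\varepsilon = \{w = h_\varepsilon(\tilde z)\}$ with
\[
h_\varepsilon(\tilde z) = h_0(\tilde z) + \varepsilon h_1(\tilde z, \varepsilon), \qquad h_0(\tilde z) = \frac{p''(V^-) V^-}{2 \mu(V^-)\sqrt{-p'(V^-)}}\, \tilde z(1 - \tilde z),
\]
which is strictly positive on $(0,1)$. The reduced equation on $\mathcal{M}_\varepsilon$ would then take the logistic form
\[
\dot{\tilde z} = C(V^-)\, \tilde z(1 - \tilde z) + \varepsilon h_1(\tilde z, \varepsilon), \qquad C(V^-) = \frac{p''(V^-) V^-}{2 \mu(V^-) \sqrt{-p'(V^-)}} > 0,
\]
whose unique (up to translation) heteroclinic solution monotonically increases from $0$ to $1$, a property preserved under small perturbations so that $\dot{\tilde z} > 0$ throughout.

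Undoing the rescaling then finishes the argument: positivity and boundedness of $\dot{\tilde z}$ give $0 < V'(y) = \varepsilon^2 \dot{\tilde z}(\varepsilon y) < C\varepsilon^2$, while differentiating the slow manifold relation once and twice produces $|\ddot{\tilde z}|, |\dddot{\tilde z}| \leq C|\dot{\tilde z}|$ by uniform boundedness of $h_0, h_1$ and their derivatives, which translate via $V'' = \varepsilon^3 \ddot{\tilde z}$ and $V''' = \varepsilon^4 \dddot{\tilde z}$ into the bounds $|V''| \leq C\varepsilon |V'|$ and $|V'''| \leq C\varepsilon^2 |V'|$. The main (and essentially only) obstacle will be sign bookkeeping: one has to verify that the flip $s \to -s$ passing from the backward to the forward shock exactly compensates the change from $\tilde z(1 + \tilde z)$ to $\tilde z(\tilde z - 1)$ in the expansion of $\tilde f$, so that $h_0$ remains positive on $(0,1)$ and the reduced dynamics remain logistic, but with an increasing (rather than decreasing) heteroclinic profile. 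Once these signs are correctly fixed, the remainder of the argument is identical in structure to that of Theorem \ref{theo:monotone-L}.
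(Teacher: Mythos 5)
Your proposal is correct and follows exactly the route the paper intends: the paper gives no separate proof of Theorem \ref{theo:monotone-2-L}, stating only that it is analogous to Theorem \ref{theo:monotone-L}, and your adaptation (rescaling $V = V^- + \e\tilde z(\e y)$, the expansion $\tilde f(\e) = \tfrac12 p''(V^-)\tilde z(\tilde z-1)\e^2 + \mathcal{O}(\e^3)$, the sign flip in the $w$-coefficient from $s>0$, the resulting positive $h_0$ and increasing logistic reduction on the slow manifold, and the same chain rule bounds for $\ddot{\tilde z}$, $\dddot{\tilde z}$) is precisely that adaptation. The sign bookkeeping you flag is carried out correctly, so nothing further is needed.
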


\section{Linearization and stability of the essential spectrum}
\label{sec:essential-L}

In this Section we examine the linearization of the system \eqref{eq:NSK-Lag} around the viscous-dispersive profile $(U,V)$ defined in Section \ref{sec:prel-L} and whose existence has been proved in Section \ref{sec:ex-L}. Moreover, we prove that, under very general conditions, the essential spectrum of the linearized operator around the profile is stable.

\subsection{The linearized operator around the profile}

Let us first recall some standard definitions. For any closed, densely defined operator, $\cL \in \ccC(X,Y)$, with $X$, $Y$ Banach spaces, its \textit{resolvent set}, $\rho(\cL)$, is defined as the set of all complex numbers $\lambda \in \C$ such that $\cL - \lambda$ is  injective, $\cR(\cL - \lambda) = Y$ and $(\cL - \lambda)^{-1}$ is a bounded operator. Its \textit{spectrum} is defined  as $\sigma(\cL) := \C \backslash \rho(\cL)$. For nonlinear wave stability purposes (cf. \cite{KaPro13}), the spectrum is often partitioned into essential spectrum, $\ess(\cL)$, and point spectrum,  $\ptsp(\cL)$, being the former the set of  complex numbers $\lambda$ for which $\cL - \lambda$ is either not  Fredholm, or is Fredholm with index different from zero, whereas $\ptsp(\cL)$ is defined as the set of complex numbers for  which  $\cL - \lambda$ is Fredholm with index zero and has a non-trivial kernel. This definition is due to Weyl \cite{We10}, making $\ess$ a large set but easy to compute, whereas $\ptsp$ comprises discrete eigenvalues with finite algebraic multiplicities (see Remark 2.2.4 in \cite{KaPro13}). It is to be observed that, since the operator is closed, then $\sigma(\cL) = \ptsp(\cL) \cup \ess(\cL)$ (for a quick proof of this fact see \cite{FNP24}, Section 3.1; the reader is referred to Kato \cite{Kat80} and Kapitula and Promislow \cite{KaPro13} for further information).

Let us consider solutions to the NSK system \eqref{eq:NSK-Lag} of the form 
\[
(v,u)(x,t) = (V(y) + \tiv(y,t), U(y) + \tiu(y,t)), 
\]
where $y = x-st$ denotes the translation variable and $(U,V)(y)$ is the viscous-dispersive shock profile under consideration. The new variables $(\tiv,\tiu)$ represent perturbations of the latter. Upon substitution into \eqref{eq:NSK-Lag} and linearizing around the shock profile, we obtain a linear system of equations for the perturbations of the form
\[
\partial_t \begin{pmatrix} \tiv \\ \tiu \end{pmatrix} = \tcL \begin{pmatrix} \tiv \\ \tiu \end{pmatrix},
\]
where $\tcL$ denotes the linearized operator around the profile and it is given by
\begin{equation}
\label{deflinop}
    \tcL \begin{pmatrix}\tiv\\ \tiu\end{pmatrix} := \begin{pmatrix}s\tiv_y+\tiu_y\\
    s\tiu_y-\left(p'(V)\tiv\right)_y +\tcL_v\begin{pmatrix}\tiv\\ \tiu\end{pmatrix}+\tcL_c \tiv \end{pmatrix},
\end{equation}
with
\[
\tcL_v\begin{pmatrix}\tiv\\ \tiu\end{pmatrix} :=\left[\frac{\mu(V)}{V}\tiu_y+\frac{\mu'(V)V-\mu(V)}{V^2}U'\tiv\right]_y,
\]
\[
 \tcL_c \tiv :=-\left[\k(V)\tiv_{yy}+\k'(V)
    V''\tiv+ \k'(V)V'\tiv_y+\frac12 \k''(V)(V')^2\tiv\right]_y.
\]

Motivated by the notion of spatially localized, finite energy perturbations in the translation coordinate frame in which the wave is stationary, let us consider $X = L^2 \times L^2$ as a base space so that the operator $\tcL : L^2 \times L^2 \to L^2 \times L^2$ is a closed, densely defined operator with domain $\cD(\tcL) = H^3 \times H^2$. Likewise the auxiliary operators $\tcL_v : \cD(\tcL_v) = H^1 \times H^2 \subset L^2 \times L^2 \to L^2$ and $\tcL_c : \cD(\tcL_c) = H^3 \to L^2$ are closed and densely defined (see, e.g., \cite{KaPro13}). The associated eigenvalue problem for the linearized operator $\tcL$ is therefore given by
\begin{equation}
\label{eq:eigenvalue-L-Lag}
    \tcL\begin{pmatrix}\tiv\\ \tiu\end{pmatrix}=\lambda\begin{pmatrix}\tiv\\ \tiu\end{pmatrix},
\end{equation}
for some $\lambda \in \C$, $(\tiv, \tiu) \in H^3 \times H^2$ and the standard partition of the spectrum applies. We now recall the following standard definition.
\begin{definition}
\label{defspecstab}
We say that the viscous-dispersive shock profile $(V,U)$ is \emph{spectrally stable} if 
 \[ 
 \sigma(\tcL) \subset \{ \lambda \in \C \, : \, \Re{\lambda} \leq 0\}.
  \]
\end{definition}

\begin{remark}
\label{transevalue}
As it is customary in the stability analysis of traveling waves, the translation invariance of the profile induces an eigenvalue located at the origin. In other words, $\lambda = 0 \in \ptsp(\tcL)$ with eigenfunction given by $(V',U') \in H^3 \times H^2$ inasmuch as $\tcL (V',U') = 0$, as the reader may directly verify. That $(V',U') \in \cD(\tcL)$ follows from the exponential decay of the profiles (which is, in turn, a consequence of the hyperbolicity of the equilibrium points $(V^\pm, U^\pm)$). We omit the details.
\end{remark}

\subsection{Stability of the essential spectrum}
First, let us examine the location of the essential spectrum. To that end, we introduce the asymptotic operators $\tcL_\pm$, obtained by taking the limits when $y\to\pm\infty$ in \eqref{deflinop} and recalling that
\begin{equation*}
    \lim_{y\to\pm\infty}V(y)=V^\pm, \qquad \qquad \lim_{y\to\pm\infty}U(y)=U^\pm,
\end{equation*}
and
\begin{equation*}
    \lim_{y\to\pm\infty}V'(y)=\lim_{y\to\pm\infty}V''(y)=
    \lim_{y\to\pm\infty}U'(y)=0.
\end{equation*}
In particular, we derive the eigenvalue problem associated to $\tcL_\pm$,
\begin{equation*}
    \tcL_\pm\begin{pmatrix}\tiv\\ \tiu\end{pmatrix}=\lambda\begin{pmatrix}\tiv\\ \tiu\end{pmatrix},
\end{equation*}
where
\begin{equation*}
     \tcL_\pm\begin{pmatrix}\tiv\\ \tiu\end{pmatrix}:=
     \begin{pmatrix}
         s\tiv'+\tiu'\\
         \displaystyle{s\tiu'-p'(V^\pm)\tiv'+\frac{\mu(V^\pm)}{V^\pm}\tiu''-\k(V^\pm)\tiv'''}
     \end{pmatrix},
\end{equation*}
and $' = d/dy$ for shortness. The latter eigenvalue problem can be written as the first order system $W' = M_\pm(\lambda) W$, with 
\begin{equation*}
    W=\begin{pmatrix}
        \tiv\\ \tiu\\ \tiv'\\ \tiv''
    \end{pmatrix}
    \qquad \mbox{ and } \qquad
    M_\pm(\lambda)=\begin{pmatrix}
        0 & 0 & 1 & 0\\
        \lambda & 0 & -s & 0\\
        0 & 0 & 0 & 1\\
        \displaystyle\frac{\lambda s}{\k(V^\pm)} & -\displaystyle\frac{\lambda}{\k(V^\pm)} & \alpha_\pm(\lambda) & -\displaystyle\frac{s\mu(V^\pm)}{V^\pm\k(V^\pm)}
    \end{pmatrix},
\end{equation*}
where we have defined
\begin{equation*}
    \alpha_\pm(\lambda):=\frac{\lambda\mu(V^\pm)}{V^\pm\k(V^\pm)}-\frac{s^2+p'(V^\pm)}{\k(V^\pm)}.
\end{equation*}
Let us compute the characteristic equation $\det(M_\pm(\lambda)-\theta I)=0$, that is
\begin{equation*}
    -\det\begin{pmatrix}
       -\theta & 0 & 1\\
       \lambda & -\theta & -s\\
       \displaystyle\frac{\lambda s}{\k(V^\pm)} & -\displaystyle\frac{\lambda}{\k(V^\pm)} & \alpha_\pm(\lambda)
    \end{pmatrix}-\left(\frac{s\mu(V^\pm)}{V^\pm\k(V^\pm)}+\theta\right)\det\begin{pmatrix}
        -\theta & 0 & 1\\
        \lambda & -\theta & -s\\
        0 & 0 & -\theta
    \end{pmatrix}=0.
\end{equation*}
A direct computation yields
\begin{equation*}
    \det\begin{pmatrix}
       -\theta & 0 & 1\\
       \lambda & -\theta & -s\\
       \displaystyle\frac{\lambda s}{\k(V^\pm)} & -\displaystyle\frac{\lambda}{\k(V^\pm)} & \alpha_\pm(\lambda)
    \end{pmatrix}=\theta\left(\theta\alpha_\pm(\lambda)+\displaystyle\frac{\lambda s}{\k(V^\pm)}\right)-\frac{\lambda^2}{\k(V^\pm)}+\frac{\lambda s\theta}{\k(V^\pm)},
\end{equation*}
and
\begin{equation*}
    \det\begin{pmatrix}
        -\theta & 0 & 1\\
        \lambda & -\theta & -s\\
        0 & 0 & -\theta
    \end{pmatrix}=-\theta^3.
\end{equation*}
As a consequence,
\begin{equation}
\label{detMtheta}
    \det(M_\pm(\lambda)-\theta I)=\theta^4+\frac{s\mu(V^\pm)}{V^\pm\k(V^\pm)}\theta^3-\alpha_\pm(\lambda)\theta^2-\frac{2\lambda s}{\k(V^\pm)}\theta+\frac{\lambda^2}{\k(V^\pm)}.
\end{equation}
Setting $\theta=i\xi$, with $\xi\in\R$ and multiplying by $\k(V^\pm)$ the characteristic equation $\det(M_\pm(\lambda)- i\xi I)=0$, we deduce the dispersion relation
\begin{equation}
\label{eq:dispersion-Lag}
d_\pm(i\xi, \lambda) = 0,
\end{equation}
with
\[
d_\pm(i\xi, \lambda)  :=\lambda^2+\left(\frac{\xi^2\mu(V^\pm)}{V^\pm}-2\xi si\right)\lambda-\left[s^2+p'(V^\pm)\right]\xi^2-\frac{s\mu(V^\pm)}{V^\pm}i\xi^3+\k(V^\pm)\xi^4,
\]
and where we have substituted the definition of $\alpha_\pm(\lambda)$.
The discriminant of \eqref{eq:dispersion-Lag} is given by
\begin{align*}
    \widetilde{\Delta}&:=\widetilde{\Delta}(V^\pm,\xi)=\frac{\xi^4\mu(V^\pm)^2}{(V^\pm)^2}+4p'(V^\pm)\xi^2-4\k(V^\pm)\xi^4.
\end{align*}
Hence, the solutions $\lambda_{1,2}^\pm(\xi):=\lambda_{1,2}(V^\pm,\xi)$ of \eqref{eq:dispersion-Lag} are
\begin{equation*}
    \lambda_{1,2}^\pm=-\frac{\xi^2\mu(V^\pm)}{2V^\pm}+\xi si\pm\frac12\sqrt{\widetilde{\Delta}},
\end{equation*}
and we have the two following cases:
\begin{itemize}
    \item[(a)] If $\widetilde{\Delta}\leq0$ then 
    \begin{equation*}
        \Re(\lambda_{1,2}^\pm)=-\frac{\xi^2\mu(V^\pm)}{2V^\pm}\leq0, \qquad \qquad \forall\,\xi\in\R.
    \end{equation*}
    \item[(b)] If $\widetilde{\Delta}>0$ then trivially $\Re(\lambda_2^\pm)\leq 0$  for any $\xi\in\R$, while
    \begin{equation*}
        \Re(\lambda_1^\pm)=-\frac{\xi^2\mu(V^\pm)}{2V^\pm}+\frac12\sqrt{\frac{\xi^4\mu(V^\pm)^2}{(V^\pm)^2}+4p'(V^\pm)\xi^2-4\k(V^\pm)\xi^4},
    \end{equation*}
    and one has $\Re(\lambda_1^\pm)\leq0$, for any $\xi\in\R$ because
    \begin{equation*}
        4p'(V^\pm)\xi^2-4\k(V^\pm)\xi^4\leq0,
        \qquad \qquad \forall\,\xi\in\R,
    \end{equation*}
    in view of \eqref{cond_pressure-L} and the positivity of $\k$. 
\end{itemize}

In conclusion, we have proved that the curves $\xi \mapsto \lambda_{1,2}^\pm(\xi)$ solving \eqref{eq:dispersion-Lag} are in the closed left half-plane
$\left\{z\in\mathbb{C}\, :\, \Re z\leq0\right\}$, for any $V^\pm>0$.
Moreover, if $\xi\neq0$, then $\Re\lambda_{1,2}^\pm(\xi)<0$. Therefore, we define the following connected, open set in the complex plane,
\begin{equation*}
\Omega := \Big\{ \lambda \in \C \, : \, \Re \lambda > \max \{ \Re \lambda^+_{j}(\xi), \Re \lambda^-_j(\xi), \, j =1,2, \, \xi \in \R \} \Big\}.
\end{equation*}
From the above calculations it is clear that
\[
\{ \lambda \in \C \, : \, \Re \lambda > 0 \} \subseteq \Omega.
\]
Moreover, for each $\lambda \in \Omega$ the matrices $M_\pm(\lambda)$ are hyperbolic (because its eigenvalues have real part different from zero, $\theta \neq i \xi$, and thus there is no center eigenspace). For each $\lambda \in \C$, let us denote the stable and the unstable eigenspaces of $M_\pm(\lambda)$ as $\E^{s}_\pm(\lambda)$ and $\E^{u}_\pm(\lambda)$, respectively. Then we have the following result, a property known as \emph{consistent splitting} \cite{KaPro13,San02}.

\begin{proposition}[consistent splitting]
\label{propconssplit}
For all $\lambda \in \Omega$ there holds
\begin{equation}
\label{conssplit}
\dim \E^u_\pm(\lambda) = \dim \E^s_\pm(\lambda) = 2.
\end{equation}
\end{proposition}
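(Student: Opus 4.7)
The plan is as follows. First, I note that $\Omega$ is by definition the open half-plane $\{\Re \lambda > M\}$ with
\[
M := \sup_{\xi \in \R,\, j=1,2,\,\pm}\Re \lambda_j^\pm(\xi) \le 0,
\]
the inequality being the content of the case analysis (a) -- (b) immediately preceding the statement; thus $\Omega$ is open and connected. Reading off \eqref{detMtheta}, one has
\[
\det\!\bigl(M_\pm(\lambda) - i\xi\, I\bigr) = \frac{1}{\k(V^\pm)}\,d_\pm(i\xi, \lambda),
\]
so that a purely imaginary eigenvalue $\theta = i\xi$ of $M_\pm(\lambda)$ would force $\lambda$ to solve $d_\pm(i\xi,\lambda) = 0$, that is, $\lambda \in \{\lambda_1^\pm(\xi), \lambda_2^\pm(\xi)\}$; by the very choice of $M$, this is precluded for $\lambda \in \Omega$. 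Consequently, $M_\pm(\lambda)$ has no eigenvalue on the imaginary axis throughout $\Omega$.

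Since the coefficients of the characteristic polynomial of $M_\pm(\lambda)$ depend analytically on $\lambda$, its four roots vary continuously in $\lambda$. Because they avoid the imaginary axis on the connected open set $\Omega$, the integer valued dimensions $\dim \E^u_\pm(\lambda)$ and $\dim \E^s_\pm(\lambda)$ are locally constant, and hence constant on $\Omega$. It therefore suffices to establish \eqref{conssplit} at one convenient point.

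For that point I would take $\lambda = R \in \R$ with $R$ large, and perform the rescaling $\theta = R^{1/2}\tau$ in \eqref{detMtheta}. After dividing by $R^2$, the characteristic equation collapses, as $R\to\infty$, to the biquadratic
\[
\tau^4 - \frac{\mu(V^\pm)}{V^\pm\k(V^\pm)}\,\tau^2 + \frac{1}{\k(V^\pm)} = 0.
\]
The associated quadratic in $\sigma = \tau^2$ has strictly positive coefficient sum $\mu(V^\pm)/(V^\pm\k(V^\pm))$ and strictly positive coefficient product $1/\k(V^\pm)$; hence its roots $\sigma_{1,2}$ lie in the open right half-plane $\{\Re \sigma > 0\}$ (either both real positive, or a complex conjugate pair with real part $\mu(V^\pm)/(2V^\pm\k(V^\pm)) > 0$). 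Each of the four square roots $\pm\sqrt{\sigma_{1,2}}$ therefore has nonzero real part, and the $\pm$ pairing yields exactly two roots with $\Re\tau > 0$ and two with $\Re\tau < 0$. A standard continuity argument for well separated roots of a polynomial, applied to the $O(R^{-1/2})$ perturbation, transfers this $2/2$ splitting to the eigenvalues of $M_\pm(R)$ for $R$ sufficiently large, which yields \eqref{conssplit} at that $\lambda$ and, by the connectedness argument above, on all of $\Omega$.

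The main technical step, and the only point where care is needed, is precisely this final asymptotic analysis: one must rule out that some perturbed root drifts back onto the imaginary axis as $R$ is decreased from $+\infty$. The strict positivity of $\Re \sigma_{1,2}$, with an explicit quantitative lower bound in terms of $\mu(V^\pm)$, $\k(V^\pm)$ and $V^\pm$, together with the fact that the $O(R^{-1/2})$ correction is arbitrarily small, is exactly what guarantees this. The remainder of the argument is structural and requires no further computation.
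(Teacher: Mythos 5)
Your proposal is correct, and it shares the paper's overall skeleton: both arguments first observe that $M_\pm(\lambda)$ has no purely imaginary eigenvalue on $\Omega$ (so the dimensions of $\E^{s,u}_\pm(\lambda)$ are constant on the connected set $\Omega$) and then compute the splitting at a single real $\lambda \gg 1$. Where you genuinely differ is in how that count is performed. The paper works directly with the real quartic \eqref{realquartic}: it computes the quartic discriminant together with the auxiliary quantities $\widehat{P}$ and $\widehat{D}$, splits into the case of four real roots versus two complex-conjugate pairs, and counts signs via Descartes' rule (case (a)) or a coefficient-positivity argument (case (b)). You instead rescale $\theta = \lambda^{1/2}\tau$, pass to the limiting biquadratic
\begin{equation*}
\tau^4 - \frac{\mu(V^\pm)}{V^\pm\kappa(V^\pm)}\,\tau^2 + \frac{1}{\kappa(V^\pm)} = 0,
\end{equation*}
whose roots in $\sigma=\tau^2$ lie in $\{\Re\sigma>0\}$ by positivity of sum and product, and transfer the resulting $2/2$ splitting by root continuity. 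This is cleaner in that it avoids the case distinction and the quartic discriminant computation entirely, treats the $1$-backward and $2$-forward shocks uniformly (the limit polynomial is independent of $s$, reflecting dominance of the viscous--dispersive terms at high frequency), and comes with the quantitative lower bound $\Re\sqrt{\sigma_{1,2}}\gtrsim \sqrt{\Re\sigma_{1,2}}$ needed for the Rouch\'e step; the paper's route is more elementary, relying only on classical quartic root classification. Two small remarks: the phrase ``well separated roots'' is slightly off, since the limiting biquadratic can have double roots when $\mu(V^\pm)^2 = 4(V^\pm)^2\kappa(V^\pm)$ --- what your argument actually uses, correctly, is separation from the imaginary axis, not mutual separation; and the worry about roots drifting back to the imaginary axis ``as $R$ is decreased'' is superfluous, since the constancy-on-$\Omega$ argument only requires the count at one sufficiently large $R$.
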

\begin{proof}
By the connectedness of $\Omega$ and the continuity of $M_\pm(\lambda)$ as a function of $\lambda$, the dimensions of the stable/unstable eigenspaces are constant in $\Omega$. Thus, it suffices to compute these dimensions for $\lambda \in \R$, $\lambda \gg 1$, sufficiently large. In this fashion, the quartic equation $\det(M_\pm(\lambda)-\theta I)= 0$ becomes the following quartic equation with real coefficients (see \eqref{detMtheta}),
\begin{equation}
\label{realquartic}
a_4 \theta^4 + a_3 \theta^3 + a_2 \theta^2 + a_1 \theta + a_0 = 0, 
\end{equation}
where
\[
a_4 = 1, \quad a_3 = \frac{s \mu^\pm}{V^\pm \kappa^\pm}, \quad a_2 =  \frac{s^2+p'(V^\pm)}{\k^\pm} - \frac{\lambda\mu^\pm}{V^\pm\k^\pm}, \quad a_1 = - \frac{2 \lambda s}{\kappa^\pm}, \quad  a_0 = \frac{\lambda^2}{\kappa^\pm},
\]
and with $\mu^\pm = \mu(V^\pm), \kappa^\pm = \kappa(V^\pm)$. The discriminant of the quartic equation \eqref{realquartic} is of the form
\[
\widehat{\Delta} = \frac{256}{(\kappa^\pm)^3} \lambda^6 + O(\lambda^5) > 0,
\]
strictly positive for $\lambda \gg 1$. Moreover,
\[
\widehat{P} :=  8a_4a_2 - 3a_3^2 = \frac{8}{\kappa^\pm} \Big( s^2 + p'(V^\pm) - \lambda \frac{\mu^\pm}{V^\pm}\Big) - \frac{3 s^2 (\mu^\pm)^2}{(V^\pm)^2(\kappa^\pm)^2} < 0,
\]
which is negative for $\lambda \gg 1$, is the second order coefficient of the associated depressed quartic, and
\[
\begin{aligned}
\widehat{D} &:= 64 a_4^3 a_0 - 16 a_4^2 a_2^2 + 16 a_4 a_3^2 a_2 - 16 a_4^2 a_3 a_1 - 3 a_3^4 \\
&= \frac{16 \lambda^2}{\kappa^\pm} \Big( 4 - \frac{(\mu^\pm)^2}{\kappa^\pm(V^\pm)^2}\Big) + O(\lambda),
\end{aligned}
\]
has the sign of $4 - \frac{(\mu^\pm)^2}{\kappa^\pm(V^\pm)^2}$ for $\lambda \gg 1$. We have two cases (cf. \cite{Rees1922,Lzrd88}):
\begin{itemize}
\item[(a)] If $\widehat{\Delta} > 0$, $\widehat{P} < 0$ and $\widehat{D} < 0$ then all four roots of \eqref{realquartic} are real and distinct.
\item[(b)] If $\widehat{\Delta} > 0$, $\widehat{P} < 0$ and $\widehat{D} > 0$ then \eqref{realquartic} has two pairs of (non-real) complex conjugated roots.
\end{itemize}
Let us first examine case (a). We have two subcases:
\begin{itemize}
\item[(i)] The case of a 1-backward shock for which $s < 0$. In this case $a_4 = 1 > 0$, $a_3 < 0$, $a_2 < 0$, $a_1 > 0$ and $a_0 > 0$ in the regime $\lambda \in \R$, $\lambda \gg 1$. By the Descartes' rule of signs \cite{Wang-X04} the number $\hat{p}$ of positive real roots of \eqref{realquartic} is either $\hat{p} = 2$ or $\hat{p} < 2 + 2k$ with $k \geq 1$. The number $\hat{q}$ of negative roots is the number of positive roots of $a_4 \theta^4 - a_3 \theta^3 + a_2 \theta^2 - a_1 \theta + a_0 = 0$. By the same argument, since $a_4 > 0$, $-a_3 > 0$, $a_2 < 0$, $-a_1 < 0$ and $a_0 > 0$ we have that either $\hat{q} = 2$ or $\hat{q} < 2 + 2m$ with $m \geq 1$. Suppose that $\hat{p} < 2$. Then necessarily $\hat{p} = 0$ because it is less than the upper bound by an even number $2k$, with $k \geq 1$. But this implies that the four roots are real and negative, a contradiction with $\hat{q} \leq 2$. We conclude that $\hat{p} = \hat{q} = 2$ and \eqref{conssplit} follows.
\item[(ii)] The case of a 2-forward shock is analogous. Here $s > 0$ and therefore $a_4  > 0$, $a_3 > 0$, $a_2 < 0$, $a_1 < 0$ and $a_0 > 0$ for $\lambda \gg 1$. This yields $\hat{p} \leq 2$, the number of positive roots. Likewise, $a_4  > 0$, $-a_3 < 0$, $a_2 < 0$, $-a_1 > 0$ and $a_0 > 0$ implies that $\hat{q} \leq 2$. Once again by Descartes' rule of signs we obtain $\hat{p} = \hat{q} = 2$ and the conclusion follows.
\end{itemize}

We now consider case (b). Clearly, for $\lambda \gg 1$ (which implies $\lambda \in \Omega$) the roots of \eqref{realquartic} are not purely imaginary, $\theta \neq i \xi$, $\xi \in \R$. Moreover, the second order coefficient is always negative for $\lambda \gg 1$ sufficiently large, $a_2 < 0$, and the leading coefficient is $a_4 \equiv 1$. This implies that the roots of \eqref{realquartic} cannot be all located in the complex left half plane, $\{ z \in \C, \: \, \Re z < 0 \}$, because in that case the coefficients should all be positive. Indeed, if all roots have negative real part then the quartic polynomial can be recast as the product of linear factors of the form $\theta - \theta_0$ with $\theta_0 < 0$ (real roots) and quadratic factors of the form $(\theta - \alpha)^2 + \beta^2$ (complex conjugated roots, $\alpha \pm i \beta$ with $\alpha < 0$), yielding a quartic equation with positive coefficients only. Moreover, substituting $\theta \to - \theta$ the second order coefficient remains negative; hence, there are roots also in the left half plane. We conclude that equation \eqref{realquartic} has exactly two complex conjugate roots with positive real part and two complex conjugate roots with negative real part, yielding the result in both the 1-backward shock and the 2-forward shock cases.

The Proposition is now proved.
\end{proof}

As a direct consequence of the above computations we readily obtain the stability of the essential spectrum.

\begin{theorem}[stability of the essential spectrum]
\label{thmstabess}
The essential spectrum of the linearized operator around any viscous-dispersive shock profile is stable, independently of the shock strength and for all the choices of end states $V^\pm > 0$.
\end{theorem}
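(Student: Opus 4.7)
The plan is to deduce the stability of the essential spectrum as a direct consequence of two ingredients that have already been assembled: the explicit location of the dispersion curves computed above, which all lie in the closed left half-plane, and the consistent splitting of Proposition \ref{propconssplit}. The whole argument is an application of the standard Fredholm-border characterization of the essential spectrum for asymptotically autonomous first-order systems.

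First, I would recast the eigenvalue problem $(\tcL-\lambda)(\tiv,\tiu)^\top = 0$ as a first-order system $W' = A(y,\lambda) W$ for $W = (\tiv,\tiu,\tiv',\tiv'')^\top$, with coefficient matrix $A(y,\lambda)$ satisfying $A(y,\lambda) \to M_\pm(\lambda)$ as $y \to \pm\infty$. Exponential convergence follows from the hyperbolicity of the equilibria $(V^\pm,0)$ for the profile ODE \eqref{eq:V-Q}: this yields exponential decay of $V-V^\pm$, $U-U^\pm$, $V'$, $V''$ and all relevant derivatives, so the perturbation $A(y,\lambda) - M_\pm(\lambda)$ decays exponentially. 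This is the hypothesis needed to apply the general dichotomy theory.

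Next, I would invoke the standard result (see, e.g., \cite{KaPro13}, Theorem 3.1.11, or \cite{San02}) which asserts that $\tcL - \lambda$ is Fredholm of index zero if and only if both asymptotic matrices $M_\pm(\lambda)$ are hyperbolic and the consistent splitting condition $\dim \E^s_+(\lambda) = \dim \E^s_-(\lambda)$ holds. Failure of hyperbolicity of $M_\pm(\lambda)$ corresponds exactly to $\lambda$ lying on one of the dispersion curves $\xi \mapsto \lambda_{1,2}^\pm(\xi)$ determined by \eqref{eq:dispersion-Lag}. The cases (a) and (b) worked out above place all such curves in the closed left half-plane, so for every $\lambda \in \Omega$ (and in particular for every $\lambda$ with $\Re \lambda > 0$) the matrices $M_\pm(\lambda)$ are hyperbolic.

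Combining these facts with Proposition \ref{propconssplit}, which secures $\dim \E^s_\pm(\lambda) = \dim \E^u_\pm(\lambda) = 2$ throughout the connected region $\Omega \supseteq \{\Re \lambda > 0\}$, I conclude that $\tcL - \lambda$ is Fredholm of index zero for every $\lambda \in \Omega$. In particular $\ess(\tcL) \cap \{\Re \lambda > 0\} = \emptyset$, yielding the claim for every shock amplitude and all end states $V^\pm > 0$. No genuine obstacle remains at this point; the only moderately technical input is the exponential decay of $A(y,\lambda) - M_\pm(\lambda)$, but this is an immediate byproduct of the hyperbolic nature of $(V^\pm,0)$ as equilibria of the profile system, which was already exploited in Remark \ref{transevalue}.
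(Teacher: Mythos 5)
Your argument is correct and follows essentially the same route as the paper: recast the spectral problem with asymptotic matrices $M_\pm(\lambda)$, invoke the Weyl/Fredholm-border characterization of $\ess(\tcL)$ (Theorem 3.1.11 of \cite{KaPro13}), and combine hyperbolicity of $M_\pm(\lambda)$ on $\Omega$ (from the dispersion-curve computation) with the consistent splitting of Proposition \ref{propconssplit} to exclude essential spectrum in $\{\Re\lambda>0\}$. The paper's proof only adds the observation that $\Re\lambda_{1,2}^\pm(\xi)<0$ for $\xi\neq 0$, sharpening the conclusion to $\ess(\tcL)\subset\{\Re\lambda<0\}\cup\{0\}$.
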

\begin{proof}
Suppose that the end states $(V^\pm,U^\pm)$ and the speed $s$ define a Lax shock for the $p$--system \eqref{eq:Euler-Lag} and that $(V,U)(y)$ is the corresponding viscous-dispersive shock profile. Let $\tcL : L^2 \times L^2 \to L^2 \times L^2$ be the linearized operator around the profile defined in \eqref{deflinop}. For any $\lambda \in \C$ let us define $i_\pm(\lambda) := \dim \E^u_\pm(\lambda)$, the dimension of the unstable eigenspace of $M_\pm(\lambda)$. We now invoke Theorem 3.1.11 in Kapitula and Promislow \cite{KaPro13} (which is based upon on Weyl's essential spectrum theorem) to obtain
\[
\ess(\tcL) = \{ \lambda \in \C \, : \, i_-(\lambda) \neq i_+(\lambda)\} \cup \{ \lambda \in \C \, : \, \dim \E^c_\pm(\lambda) \neq 0\},
\]
where $\E^c_\pm(\lambda)$ denotes the center eigenspace of $M_\pm(\lambda)$ for each $\lambda \in \C$.

Now let $\lambda \in \Omega$. From consistent splitting (Proposition \ref{propconssplit}) and hyperbolicity of $M_\pm(\lambda)$ we clearly obtain $i_-(\lambda) = i_+(\lambda)$ and $\dim \E_\pm^c(\lambda) = 0$, yielding $\Omega \subset \C \backslash \ess(\tcL)$. This implies, in turn, that $\ess(\tcL) \subset \C \backslash \Omega \subset \{ \lambda \in \C \, : \, \Re \lambda \leq 0 \}$. Moreover, since $\Re \lambda_{1,2}^\pm (\xi) < 0$ for all $\xi \neq 0$, we conclude that
\[
\ess(\tcL) \subset \{ \lambda \in \C \, : \, \Re \lambda < 0 \} \cup \{ 0 \},
\]
proving the result.
\end{proof}

\begin{remark}
It is to be observed that this result holds for any 1-backward or 2-forward shock, that is, for any triplet $(V^\pm,U^\pm, s)$ satisfying Rankine--Hugoniot and Lax entropy conditions, independently of the shock amplitude. The essential spectrum of linearized operator around the unique (up to translations) viscous-dispersive profile is therefore stable. Moreover, thanks to the calculation of the dispersive relation we can say more about its location. If we define the Fredholm borders as
\[
\sigma_F(\tcL) := \{ \lambda \in \C \, : \, d_\pm(i\xi, \lambda) = 0, \, \xi \in \R \},
\]
then it is known (see Theorem 3.1.13 in \cite{KaPro13}) that these Fredholm borders are the boundaries of the open regions on which the operator is Fredholm; hence, $\ess(\tcL)$ is located to the left of the union of the Fredholm borders with maximum real part. Since in our case these borders touch the origin, we observe accumulation of the essential spectrum near the origin (such as in the purely viscous shock case \cite{ZH98,GZ98}). 
\begin{figure}[t]
\begin{center}
\includegraphics[scale=.62, clip=true]{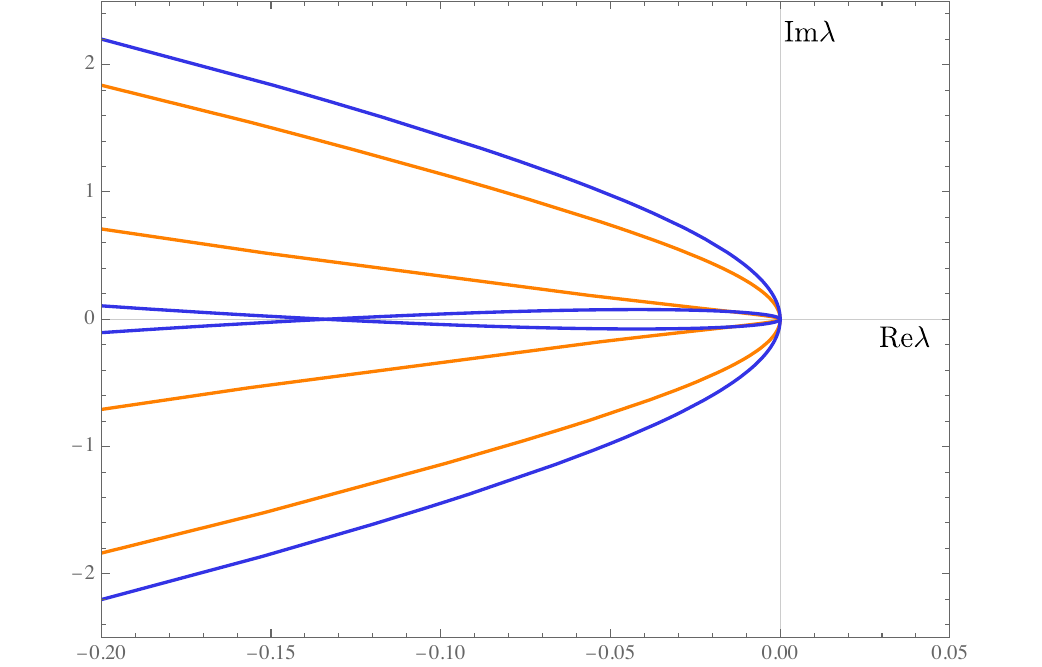}
\end{center}
\caption{\small{Fredholm borders, $\sigma_F(\tcL)$, as the curves $\lambda = \lambda_{1,2}^-(\xi)$ (in blue) and $\lambda = \lambda_{1,2}^+(\xi)$ (in orange) for $\xi \in \R$, which are solutions to the dispersion relation \eqref{eq:dispersion-Lag}. The essential spectrum of $\tcL$ is sharply bounded to the left. The accumulation of essential spectrum near the origin is manifest (color online).}}\label{figFredholm}
\end{figure}
Let us now illustrate the location of the essential spectrum by computing the Fredholm borders in the complex plane in the case of the adiabatic $\gamma$-gas pressure law with (nonlinear and $v$-dependent) viscosity and capillarity coefficients defined in \eqref{numexample}. Once again we select the end states to be $V^- = 1.5$ and $V^+ = 1$. Figure \ref{figFredholm} shows the complex roots $\lambda = \lambda_{1,2}^+(\xi)$ (in orange color) and $\lambda = \lambda_{1,2}^-(\xi)$ (in blue color) of the dispersion relation \eqref{eq:dispersion-Lag}. 
The essential spectrum is located to the left of these curves. Notice that any stable neighborhood of the origin contains elements of the essential spectrum. This fact is referred to as the absence of a spectral gap between the spectrum of $\tcL$ and the origin, making the nonlinear (asymptotic) stability analysis of the profiles more complicated than in the standard case.
\end{remark}

\section{Point spectral stability of small shocks}
\label{sec:point-L}

In this Section we examine the point spectrum of the linearized operator around any viscous-dispersive profile in the case when its amplitude is sufficiently small. Following Goodman \cite{Go86,Go91}, it is more convenient to recast the spectral problem in terms of an integrated operator. For that purpose, let us introduce the integrated variables
%
%
%
%
%
%
\begin{equation*}
    v(x):=\int_{-\infty}^x \tiv(y)\,dy, 
    \qquad u(x):=\int_{-\infty}^x \tiu(y)\,dy.
\end{equation*}
This transformation removes the zero eigenvalue without any further modification of the spectrum. Indeed, the original eigenvalue problem \eqref{eq:eigenvalue-L-Lag} can be expressed in terms of $v, u$ as 
\begin{align}
        \lambda v&=sv'+u', \label{eq:eigenvalue1-Lag}\\
        \lambda u&=su'-p'(V)v' + \cL_v\begin{pmatrix}v\\ u\end{pmatrix} + \cL_c v, \label{eq:eigenvalue2-Lag}
\end{align}
where $'$ stands for $d/dx$ and the operators $\cL_v$ and $\cL_c$ are defined as
\begin{equation*}
    \cL_v\begin{pmatrix}v\\ u\end{pmatrix}:=\frac{\mu(V)}{V}u''+\frac{\mu'(V)V-\mu(V)}{V^2}U'v',
\end{equation*}
and 
\begin{equation}\label{eq:cap}
    \cL_c v:=-\k(V)v'''-\k'(V)V''v'-\k'(V)V'v''-\frac12\k''(V)(V')^2v',
\end{equation}
respectively. Hence, we define the \emph{integrated operator} as
%
%
%
%
%
\begin{equation*}
    \mathcal{L}\begin{pmatrix}v\\ u\end{pmatrix}:=\begin{pmatrix}
        sv'+u'\\
        su'+\displaystyle\left[-p'(V)+\frac{\mu'(V)V-\mu(V)}{V^2}U'\right]v'+\frac{\mu(V)}{V}u''+ \cL_c v
    \end{pmatrix},
\end{equation*}
\[
\cL :  L^2 \times L^2 \to L^2 \times L^2,
\]
with dense domain $\cD(\cL) = H^3 \times H^2 \subset L^2 \times L^2$. Once again, the integrated operator $\cL$ is a closed, densely defined operator acting on the energy space $L^2 \times L^2$. The relation between the point spectra of both operators is described in the following Proposition, whose proof is very similar (actually, it is the same, word by word) to that of Lemma 3.4 in \cite{FPZ22} and therefore we omit it.
\begin{proposition}
\label{propequiv}
The point spectrum of the original operator $\tcL$ is contained in the point spectrum of the integrated operator $\cL$, except for the eigenvalue zero. More precisely, 
\[
\ptsp(\tcL) \backslash \{0\} \subset \ptsp(\cL).
\]
\end{proposition}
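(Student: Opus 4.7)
The plan is to verify directly that if $\lambda \in \ptsp(\tcL) \setminus \{0\}$ with non-trivial eigenfunction $(\tiv,\tiu) \in \cD(\tcL) = H^3 \times H^2$, then its antiderivatives $v(x) := \int_{-\infty}^x \tiv(y)\,dy$ and $u(x) := \int_{-\infty}^x \tiu(y)\,dy$ define a non-trivial eigenfunction of $\cL$ in $\cD(\cL) = H^3 \times H^2$ at the same $\lambda$, and that $\cL - \lambda$ is Fredholm of index zero. The key structural observation is that each component of $\tcL(\tiv,\tiu)$ is, by construction, an exact $y$-derivative (the linearized equations are in conservation form), so that integrating the equation $\tcL(\tiv,\tiu) = \lambda(\tiv,\tiu)$ once in $y$ from $-\infty$ is expected to reproduce exactly $\cL(v,u) = \lambda(v,u)$.

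The only non-routine step — and the main obstacle — is showing that the primitives $v$ and $u$ actually lie in $L^2$, and this is where the hypothesis $\lambda \neq 0$ is essential. First I would integrate the first component $\lambda \tiv = (s\tiv + \tiu)_y$ from $-\infty$ to $x$; since $(\tiv,\tiu) \in H^3 \times H^2$ embeds into continuous functions vanishing at $\pm\infty$, the boundary term at $-\infty$ drops out and one obtains the pointwise identity
\[
\lambda\, v(x) = s\tiv(x) + \tiu(x),
\]
whose right-hand side is in $L^2$, forcing $v \in L^2$. For $u$, I would analogously integrate the second component of the eigenvalue equation, whose right-hand side is also an exact $y$-derivative; every boundary term at $-\infty$ (involving $\tiv$, $\tiu$, $\tiv_y$, $\tiv_{yy}$, $\tiu_y$, all multiplied by coefficients bounded in $V$, $U$) vanishes by the same Sobolev embedding. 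The result is an expression for $\lambda u$ as a linear combination of $s\tiu$, $-p'(V)\tiv$, the viscous block $(\mu(V)/V)\tiu_y + ((\mu'(V)V - \mu(V))/V^2)U'\tiv$, and the capillarity block $\cL_c v$ as defined in \eqref{eq:cap}. All these terms lie in $L^2$ thanks to $(\tiv,\tiu) \in H^3 \times H^2$ and to the uniform boundedness of $V, V', V'', U'$, so $u \in L^2$.

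Once $v, u \in L^2$ are in hand, the regularity is automatic: from $v' = \tiv \in H^3$ one obtains $v \in H^4 \subset H^3$, and from $u' = \tiu \in H^2$ one obtains $u \in H^3 \subset H^2$, placing $(v,u) \in \cD(\cL)$. Non-triviality of $(v,u)$ is immediate because its derivative is the non-trivial pair $(\tiv,\tiu)$. A direct inspection of the two definitions then confirms $\cL(v,u) = \lambda(v,u)$, essentially because the integrated version of $\tcL$ is exactly $\cL$.

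To conclude $\lambda \in \ptsp(\cL)$, it remains to verify that $\cL - \lambda$ is Fredholm of index zero. Here I would exploit that the asymptotic operators of $\cL$ as $y \to \pm\infty$ coincide with those of $\tcL$ computed in Section \ref{sec:essential-L}: the additional coefficients produced by the integration involve $V', V'', U'$, which vanish at $\pm\infty$. Consequently, Weyl's theorem on the essential spectrum (applied exactly as in the proof of Theorem \ref{thmstabess}) gives $\ess(\cL) = \ess(\tcL)$, so that $\lambda \notin \ess(\tcL)$ implies $\lambda \notin \ess(\cL)$ and hence $\cL - \lambda$ is Fredholm of index zero. The reason $\lambda = 0$ must be excluded is now transparent: without the factor $\lambda \neq 0$ one cannot solve for $v$ pointwise from $\lambda v = s\tiv + \tiu$, the antiderivatives need not even lie in $L^2$, and the whole reduction to the integrated operator collapses.
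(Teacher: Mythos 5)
Your argument is correct and is essentially the proof the paper has in mind: the paper omits it by pointing to Lemma 3.4 of \cite{FPZ22}, which proceeds exactly as you do — define the primitives, use the eigenvalue equations divided by $\lambda\neq 0$ (with decay of $H^m$ eigenfunctions killing the boundary terms at $-\infty$) to place $(v,u)$ in $L^2$ and hence in $\cD(\cL)$, observe the integrated equations are satisfied, and note the asymptotic operators (hence the Fredholm/index-zero property away from the essential spectrum) coincide for $\tcL$ and $\cL$. No gaps worth flagging.
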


In view of this result, it suffices to prove that $\ptsp(\cL) \subset \{ \lambda \in \C \, : \, \Re \lambda < 0 \}$ in order to obtain the point spectral stability of the original operator. The following proposition establishes sufficient conditions for that to happen. For simplicity we consider the case of a 1-backward shock with $s < 0$ and $V^- > V^+$ (the case of a 2-forward shock is analogous).
%
\begin{proposition}\label{prop:energy-est}
Let $(V,U)$ be the solution of \eqref{eq:profiles-L}-\eqref{eq:profiles-end-Lag} given by Theorem \ref{theo:ex-L}. 
Moreover, let us assume that $V^->0$ is such that
\begin{equation}
\label{eq:imp-ass}
    -\k'(V^-)p'(V^-)+\k(V^-)p''(V^-)=:M>0.
\end{equation}
If the shock amplitude $\varepsilon=V^--V^+>0$ is sufficiently small, then the operator $\mathcal{L}$ is point spectrally stable, that is $$\ptsp(\mathcal{L})\subset\left\{\lambda\in\mathbb{C}\, : \, \Re\lambda<0\right\}.$$
\end{proposition}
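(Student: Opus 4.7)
The plan is a weighted spectral energy estimate on the integrated operator $\mathcal{L}$, in the spirit of Humpherys \cite{Hu09}, now adapted to variable viscosity and capillarity by exploiting the smallness estimates \eqref{eq:estprof-L}--\eqref{eq:estprof-3-L} of Theorem \ref{theo:monotone-L}. I argue by contradiction: fix $\lambda\in\ptsp(\mathcal{L})$ with $\Re\lambda\geq 0$ and eigenfunction $(v,u)\in H^3\times H^2$, and show that $(v,u)$ must vanish. The starting point is to test the eigenvalue system \eqref{eq:eigenvalue1-Lag}--\eqref{eq:eigenvalue2-Lag} against weighted complex conjugates: multiply \eqref{eq:eigenvalue1-Lag} by $-p'(V)\bar v$ (a positive weight, by \eqref{cond_pressure-L}) and \eqref{eq:eigenvalue2-Lag} by $\bar u$; integrate on $\R$, add, and take real parts. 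Since $(v,u)$ lies in the domain of $\mathcal{L}$ all boundary terms vanish. The left-hand side produces $\Re\lambda\cdot\mathcal{E}[v,u]$ with $\mathcal{E}[v,u]:=\int_\R(-p'(V)|v|^2+|u|^2)\,dx$ coercive on $L^2\times L^2$; the viscous part yields the dissipation $-\int(\mu(V)/V)|u'|^2\,dx$ up to remainders involving $V'$ and $U'$; and the $sv',su'$-transport terms integrate by parts into contributions with coefficients proportional to $p''(V)V'$.

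The crucial work is the third-order capillarity contribution $\int\mathcal{L}_c v\,\bar u\,dx$: after an integration by parts and the substitution $\bar u'=\bar\lambda\bar v-s\bar v'$ (conjugate of \eqref{eq:eigenvalue1-Lag}), the cross-coupling between $u$ and $v$ is converted into intrinsic expressions in $v$. Specifically, $-\int\kappa(V)v'''\bar u\,dx$ produces, modulo $V'$-remainders, the term $\bar\lambda\int\kappa(V)v''\bar v\,dx$, which after a further integration by parts equals $-\bar\lambda\int\kappa(V)|v'|^2\,dx$. Taking real parts and moving this to the left upgrades the energy to $\mathcal{E}[v,u]+\int\kappa(V)|v'|^2\,dx$, absorbing the $H^1$-norm of $v$. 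The $|v''|^2$-dissipation is extracted by a second energy test, namely pairing \eqref{eq:eigenvalue2-Lag} with $\bar v''$ and using \eqref{eq:eigenvalue1-Lag} to eliminate $\bar u''$ via $\bar u''=\bar\lambda\bar v'-s\bar v''$. Combining the two identities with appropriate weights leads to a bound of the schematic form
\[
2\Re\lambda\bigl(\mathcal{E}[v,u]+\mathcal{K}[v]\bigr)\leq -\int\bigl\langle\mathcal{D}(V)\Psi,\Psi\bigr\rangle\,dx+\mathcal{R},\qquad \Psi=(v',v'',u')^\top,
\]
where $\mathcal{K}[v]=\int\kappa(V)|v'|^2\,dx$, $\mathcal{D}(V)$ is a Hermitian $3\times 3$ matrix with entries built from $\kappa(V),\kappa'(V),p'(V),\mu(V)/V,s$, and $\mathcal{R}$ collects every term carrying at least one factor $V', V''$ or $V'''$.

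The hypothesis \eqref{eq:imp-ass} enters at the frozen-coefficient level $V=V^-$: by the Sylvester criterion, positive definiteness of $\mathcal{D}(V^-)$ reduces precisely to the strict positivity of $M=-\kappa'(V^-)p'(V^-)+\kappa(V^-)p''(V^-)$; in the constant-capillarity case this collapses to $\kappa(V^-)p''(V^-)>0$, automatic from \eqref{cond_pressure-L}, recovering the result of \cite{Hu09}. Under \eqref{eq:imp-ass}, by continuity $\mathcal{D}(V)\geq(c_0/2)I$ along the entire profile for sufficiently small $\varepsilon$. Finally, by \eqref{eq:estprof-L}--\eqref{eq:estprof-3-L}, each factor of $V',V'',V'''$ is pointwise $O(\varepsilon^2)$, so Young's inequality gives $|\mathcal{R}|\leq C\varepsilon\int(|v|^2+|v'|^2+|v''|^2+|u|^2+|u'|^2)\,dx$, which is absorbed into the principal dissipation for $\varepsilon$ small. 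The surviving inequality
\[
2\Re\lambda\bigl(\mathcal{E}[v,u]+\mathcal{K}[v]\bigr)\leq -\frac{c_0}{4}\int\bigl(|v'|^2+|v''|^2+|u'|^2\bigr)\,dx
\]
has a nonnegative left-hand side and a nonpositive right-hand side under $\Re\lambda\geq 0$, so both must vanish: $v'\equiv v''\equiv u'\equiv 0$, and the $L^2$ membership forces $v\equiv u\equiv 0$, contradicting the eigenfunction hypothesis. The main difficulty will be the algebraic bookkeeping in the capillarity term (repeated integrations by parts against the variable weight $\kappa(V)$ with systematic substitution of $\bar u'=\bar\lambda\bar v-s\bar v'$) and the explicit identification of the principal symbol $\mathcal{D}(V^-)$ so that its positive definiteness is seen to be equivalent to the algebraic condition \eqref{eq:imp-ass}.
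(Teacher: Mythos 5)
Your overall strategy (contradiction, weighted energy estimates, smallness of the profile derivatives from Theorem \ref{theo:monotone-L}, Young absorption) is the right one, and it is indeed the spirit of the paper's proof, which multiplies \eqref{eq:eigenvalue2-Lag} by $u^*/f_1$ with $f_1=-p'(V)+\tfrac{\mu'(V)V-\mu(V)}{V^2}U'$ and eliminates $u'$, $u''$ via \eqref{eq:eigenvalue1-Lag}. However, two steps of your proposal do not survive scrutiny. First, the mechanism by which you make the hypothesis \eqref{eq:imp-ass} enter is untenable: you claim that positive definiteness of a frozen-coefficient matrix $\mathcal{D}(V^-)$ acting on $(v',v'',u')$ is equivalent to $M=-\kappa'(V^-)p'(V^-)+\kappa(V^-)p''(V^-)>0$. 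But $\kappa'(V^-)$ and $p''(V^-)$ simply do not occur in the constant-coefficient linearization at $V^-$; they enter the operator only through terms carrying a factor $V'$, $V''$ or $U'$ --- exactly the terms you have assigned to the remainder $\mathcal{R}$. Moreover, at frozen coefficients the capillarity operator is purely dispersive, so there is no order-one dissipation in $|v'|^2$, let alone $|v''|^2$ (this is consistent with the essential-spectrum computation of Section \ref{sec:essential-L}, where the only negative real-part contribution comes from the viscosity). In the correct argument the structural condition appears through the sign of the \emph{profile-derivative} coefficient $f_3=\tfrac{s}{2}\bigl[\kappa(V)/f_1\bigr]'$ multiplying $|v'|^2$: using $U'=-sV'$, monotonicity ($sV'>0$) and \eqref{eq:f1-prime}-type expansions, $f_3\sim \tfrac{sV'}{2f_1^2}\,[-\kappa'(V)p'(V)+\kappa(V)p''(V)]$, which is positive of size $|V'|$ precisely when \eqref{eq:imp-ass} holds and $\varepsilon$ is small. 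So the $v'$-control available is only $O(|V'|)=O(\varepsilon^2)$, and your claimed final inequality with $\tfrac{c_0}{4}\int(|v'|^2+|v''|^2+|u'|^2)$ cannot be produced; likewise your auxiliary pairing of \eqref{eq:eigenvalue2-Lag} with $\bar v''$ generates $\lambda$-dependent cross terms (e.g.\ $\lambda\int\tfrac{\mu(V)}{V}v'\bar v''$) that you have no means to absorb, since $\lambda$ is not a priori bounded on the point spectrum.

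Second, your absorption step fails as stated: a remainder bounded by $C\varepsilon\int(|v|^2+|u|^2+\cdots)\,dx$ cannot be absorbed anywhere, because the left-hand side controls $\int|v|^2$ and $\int|u|^2$ only through the factor $\Re\lambda\ge 0$, which may be zero. The crucial bookkeeping point --- and the reason the paper's estimates are organized as they are --- is that every error term must carry a \emph{pointwise} weight $|V'|$ (in fact $\varepsilon|V'|$, cf.\ the bounds $|g_1|\le c\varepsilon|V'|$, $|g_2|\le c|V'|$), so that it can be dominated by the good terms of the \emph{same} order $|V'|$, namely $\int f_2|u|^2$ and $\int f_3|v'|^2$ with $f_2,f_3\ge C_1|V'|$; the only order-one dissipation is $\int\tfrac{\mu(V)}{Vf_1}|u'|^2$, used to absorb the $\eta_1\int|u'|^2$ piece of the Young inequality. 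With this structure no $|v|^2$ remainder ever appears, and the contradiction is reached directly from the signs in the final inequality rather than from forcing $v'\equiv v''\equiv u'\equiv 0$. As written, your proposal therefore has a genuine gap both in the identification of where \eqref{eq:imp-ass} acts and in the closing absorption argument.
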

\begin{remark}
The proof of Proposition \ref{prop:energy-est} extends the energy estimates performed by Humpherys \cite{Hu09} to analyze the system with constant viscosity and capillarity coefficients in Lagrangian coordinates, a case under which condition \eqref{eq:imp-ass} is automatically satisfied. A similar strategy has also been used to study a quantum hydrodynamic system with linear \cite{FPZ22} and nonlinear \cite{FPZ23} viscosities.
\end{remark}

Before proving Proposition \ref{prop:energy-est}, we need some auxiliary results.

\begin{lemma}\label{lem:f1}
Let $(V,U)$ be the solution of \eqref{eq:profiles-L}-\eqref{eq:profiles-end-Lag} given by Theorem \ref{theo:ex-L}. Define
\begin{equation}\label{eq:f1}
    f_1(x):=-p'(V(x))+\frac{\mu'(V(x))V(x)-\mu(V(x))}{V^2(x)}U'(x).
\end{equation}
If $V^->0$ and the amplitude $\e:=V^--V^+$ is sufficiently small, then there exist positive constants $C_1,C_2>0$ (independent on $\e$) such that
\begin{align}
    0<C_1\leq f_1(x)&\leq C_2,  \label{eq:f1-bounded}\\
    0<C_1|V'(x)|\leq f_1'(x)&\leq C_2|V'(x)|,  \label{eq:f1-der-bounded}\\
    |f_1''(x)|&\leq C\e|V'(x)|, \label{eq:f1-der2}
\end{align}
for any $x\in\R$.
\end{lemma}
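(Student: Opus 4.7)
The plan is to combine the smoothness of $p$ and $\mu$ as functions of $V$ with the refined small-amplitude estimates of Theorem \ref{theo:monotone-L}, together with the identity $U'=-sV'$ coming from the first equation in \eqref{eq:profiles-L}. Since the profile is monotone decreasing with $V(x)\in[V^+,V^-]$, one has the uniform bound $|V(x)-V^-|\leq\varepsilon$; moreover, Theorem \ref{theo:monotone-L} provides $|V'|\leq C\varepsilon^2$, $|V''|\leq C\varepsilon|V'|$, and $|V'''|\leq C\varepsilon^2|V'|$, all with constants independent of $\varepsilon$.

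The first bound \eqref{eq:f1-bounded} follows by a straightforward Taylor expansion: $-p'(V(x))=-p'(V^-)+O(\varepsilon)$ uniformly in $x$, while the second contribution in the definition \eqref{eq:f1} of $f_1$ is of order $|U'|=|sV'|=O(\varepsilon^2)$. Since $-p'(V^-)>0$ by \eqref{cond_pressure-L}, for $\varepsilon$ small enough $f_1(x)$ is uniformly pinched between, say, $\tfrac12|p'(V^-)|$ and $2|p'(V^-)|$, yielding two $\varepsilon$-independent constants.

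For \eqref{eq:f1-der-bounded}, I would differentiate once, abbreviating $g(V):=(\mu'(V)V-\mu(V))/V^2$, to obtain
\[
f_1'=-p''(V)V'-sg'(V)(V')^2-sg(V)V''.
\]
The leading term $-p''(V)V'$ equals $p''(V^-)|V'|+O(\varepsilon)|V'|$, because $V'<0$ and $V$ is uniformly close to $V^-$; the second term is $O(|V'|^2)=O(\varepsilon^2|V'|)$, while the third is $O(|V''|)=O(\varepsilon|V'|)$. Since $p''(V^-)>0$ by \eqref{cond_pressure-L}, for $\varepsilon$ sufficiently small these estimates deliver the two-sided bound on $f_1'$ with constants $C_1,C_2$ independent of $\varepsilon$ (e.g., $C_1=\tfrac12 p''(V^-)$ and $C_2=2p''(V^-)$).

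For \eqref{eq:f1-der2}, differentiating once more yields an expression of the form
\[
f_1''=-p'''(V)(V')^2-p''(V)V''-sg''(V)(V')^3-3sg'(V)V'V''-sg(V)V''',
\]
and each term can be absorbed into $C\varepsilon|V'|$ using $|V'|\leq C\varepsilon^2$, $|V''|\leq C\varepsilon|V'|$, and $|V'''|\leq C\varepsilon^2|V'|$ (for instance, $|V'V''|\leq C\varepsilon^3|V'|$, while the dominant $-p''(V)V''$ is already of the claimed size $O(\varepsilon|V'|)$). The main obstacle is purely algebraic bookkeeping: there is no genuine analytic difficulty beyond what Theorem \ref{theo:monotone-L} already supplies, only the need to carefully track the orders of the many terms that arise under successive differentiation and to verify that the $O(\varepsilon)$ corrections do not spoil the positivity of the leading contributions in \eqref{eq:f1-bounded} and \eqref{eq:f1-der-bounded}.
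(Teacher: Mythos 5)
Your proposal is correct and follows essentially the same route as the paper: substitute $U'=-sV'$, use the convexity/monotonicity of $p$ along the monotone profile for the zeroth-order bound, and then differentiate once and twice, absorbing the $\mu$-dependent remainder terms via the estimates $|V'|\leq C\e^2$, $|V''|\leq C\e|V'|$, $|V'''|\leq C\e^2|V'|$ from Theorem \ref{theo:monotone-L}. The explicit expressions you obtain for $f_1'$ and $f_1''$ coincide with the paper's decomposition (your $-sg'(V)(V')^2-sg(V)V''$ is exactly the paper's $\omega_1$), so no gap remains.
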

\begin{proof}
The result is a direct consequence of Theorem \ref{theo:monotone-L}. 
First, since $U'=-sV'$ one has
\begin{equation*}
    f_1(x):=-p'(V(x))-s\frac{\mu'(V(x))V(x)-\mu(V(x))}{V^2(x)}V'(x).
\end{equation*}
From \eqref{eq:estprof-L} it follows that
\begin{equation*}
    f_1(x):=-p'(V(x))+\mathcal{O}(\e^2),
\end{equation*}
and the assumption on $p$ \eqref{cond_pressure-L} together with the monotonicity of $V$ imply
\begin{equation*}
    -p'(V(x))\geq -p'(V^+)>0, \qquad \mbox{ for } V^+>0.
\end{equation*}
Hence, if the amplitude $\e$ of the shock is sufficiently small, we end up with \eqref{eq:f1-bounded}.
By differentiating, we obtain
\begin{equation}\label{eq:f1-prime}
    f_1'=-p''(V)V'+\omega_1,
\end{equation}
where
\begin{equation*}
    \omega_1:=-s\frac{\mu'(V)V-\mu(V)}{V^2}V''
    -s\frac{\mu''(V)V^2-2\mu'(V)V+2\mu(V)}{V^3}(V')^2.
\end{equation*}
Thanks to \eqref{eq:estprof-L}-\eqref{eq:estprof-2-L}, we can state that there exists a constant $C>0$ (independent on $\e$) such that
\begin{equation}\label{eq:est-omega}
    |\omega_1|\leq C\e|V'|,
\end{equation}
if $\e$ is sufficiently small, and as a consequence, from \eqref{eq:f1-prime} we deduce \eqref{eq:f1-der-bounded} still in view of the monotonicity of $V$ and the convexity of the pressure $p$ in \eqref{cond_pressure-L}.
Finally, a direct computation gives
\begin{equation*}
    f_1''=-p''(V)V''-p'''(V)(V')^2+\omega_1',
\end{equation*}
and, by using \eqref{eq:estprof-L}-\eqref{eq:estprof-2-L}-\eqref{eq:estprof-3-L}, we infer
\begin{equation*}
    |\omega_1'|\leq C\e^2|V'|.
\end{equation*}
Therefore, by using again \eqref{eq:estprof-L}-\eqref{eq:estprof-2-L}, we end up with \eqref{eq:f1-der2} and the proof is complete.
\end{proof}
\begin{lemma}
Let $(V,U)$ be the solution of \eqref{eq:profiles-L}-\eqref{eq:profiles-end-Lag} given by Theorem \ref{theo:ex-L}. Define
\begin{equation}\label{eq:f2}
    f_2:=\left\{\frac{s}{2f_1(x)}-\left[\frac{\mu(V(x))}{2V(x)f_1(x)}\right]'\right\}',
\end{equation}
where $f_1$ is defined in \eqref{eq:f1}.
If $V^->0$ and the amplitude $\e:=V^--V^+$ is sufficiently small, then there exist positive constants $C_1,C_2>0$ (independent on $\e$) such that
\begin{equation}\label{eq:f2-bounded}
    0<C_1|V'(x)|\leq f_2(x)\leq C_2|V'(x)|,
\end{equation}
for any $x\in\R$.
\end{lemma}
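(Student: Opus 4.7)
The plan is to expand $f_2$ explicitly, identify its leading order term in the small-shock regime, and control the remaining contributions using the estimates from Theorem \ref{theo:monotone-L} and Lemma \ref{lem:f1}. After performing the outermost differentiation, one obtains
\[
f_2 \;=\; -\frac{s\,f_1'}{2 f_1^{\,2}} \;-\; \left(\frac{\mu(V)}{2V\,f_1}\right)''.
\]
First I would treat the first piece as the principal term. Using $f_1' = -p''(V) V' + \omega_1$ from \eqref{eq:f1-prime} and $|\omega_1|\le C\varepsilon|V'|$ from \eqref{eq:est-omega}, one rewrites
\[
-\frac{s\,f_1'}{2 f_1^{\,2}} \;=\; \frac{s\,p''(V)\,V'}{2 f_1^{\,2}} \;-\; \frac{s\,\omega_1}{2 f_1^{\,2}}.
\]
Since $s<0$, $V'<0$ (by Theorem \ref{theo:monotone-L}), $p''(V)>0$ (by \eqref{cond_pressure-L}), and $f_1$ is uniformly bounded away from zero by \eqref{eq:f1-bounded}, the leading term satisfies $s p''(V) V'/(2f_1^{\,2}) \ge c_0 |V'|$ for some constant $c_0>0$ independent of $\varepsilon$; the $\omega_1$ term is at most $C\varepsilon|V'|$.

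Next I would argue that the second piece $\bigl(\mu(V)/(2V f_1)\bigr)''$ is a higher-order remainder of size $O(\varepsilon|V'|)$. Expanding via the product and chain rules produces a finite sum of terms, each involving one of the following factors:
\[
V'',\quad V''',\quad (V')^2,\quad V'\,V'',\quad V'\,f_1',\quad f_1'',\quad (f_1')^2,
\]
multiplied by smooth bounded functions of $V$ and by powers of $1/f_1$. Now I would invoke, in turn, the bounds $|V''|\le C\varepsilon|V'|$ and $|V'''|\le C\varepsilon^2|V'|$ from \eqref{eq:estprof-2-L}--\eqref{eq:estprof-3-L}, the bound $|V'|\le C\varepsilon^2$ from \eqref{eq:estprof-L} (so that $(V')^2 \le C\varepsilon^2|V'|$), the control $|f_1'|\le C|V'|$ and $|f_1''|\le C\varepsilon|V'|$ from \eqref{eq:f1-der-bounded}--\eqref{eq:f1-der2}, and the uniform positive lower bound on $f_1$ from \eqref{eq:f1-bounded}. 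Every resulting term is bounded by $C\varepsilon|V'|$.

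Combining the two steps yields
\[
f_2(x) \;=\; \frac{s\,p''(V(x))\,V'(x)}{2 f_1(x)^{\,2}} \;+\; r(x), \qquad |r(x)|\le C\varepsilon|V'(x)|,
\]
with the leading term bounded below by $c_0|V'(x)|$ and above by a constant multiple of $|V'(x)|$ thanks to \eqref{eq:f1-bounded} and continuity of $p''$. Choosing $\varepsilon$ small enough so that $C\varepsilon<c_0/2$ absorbs the remainder and delivers \eqref{eq:f2-bounded}. I do not expect any genuine obstacle here: the only delicate point is the bookkeeping of the second derivative of $\mu(V)/(2V f_1)$, but every resulting monomial is directly controlled by one of the small-shock estimates already at hand.
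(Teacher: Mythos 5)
Your proposal is correct and follows essentially the same route as the paper: you differentiate to isolate the leading term $\tfrac{s\,p''(V)V'}{2f_1^2}$ and lump everything else (the $\omega_1$ contribution and $\bigl[\mu(V)/(2Vf_1)\bigr]''$) into a remainder of size $O(\varepsilon|V'|)$, exactly the paper's $\omega_2$, then use $sV'>0$, \eqref{cond_pressure-L} and \eqref{eq:f1-bounded} to conclude. The only difference is that you spell out the term-by-term bookkeeping for $\bigl[\mu(V)/(2Vf_1)\bigr]''$ that the paper states tersely, which is fine.
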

\begin{proof}
A direct differentiation and \eqref{eq:f1-prime} give
\begin{equation*}
    f_2=\frac{sp''(V)V'}{2f_1^2}+\omega_2,
\end{equation*}
where 
\begin{equation*}
    \omega_2:=-\frac{s\omega_1}{2f_1^2}-\left[\frac{\mu(V)}{2Vf_1}\right]''.
\end{equation*}
By using \eqref{eq:f1-bounded}, \eqref{eq:f1-der-bounded}
-\eqref{eq:f1-der2} and \eqref{eq:est-omega},
we deduce
\begin{equation*}
    |\omega_2|\leq C\e|V'|,
\end{equation*}
and, as a consequence, since $sV'>0$ one has $f_2>0$. Moreover, thanks to \eqref{cond_pressure-L} and \eqref{eq:f1-bounded} we can conclude that $f_2$ satisfies \eqref{eq:f2-bounded}. 
\end{proof}
\begin{lemma}
Let $(V,U)$ be the solution of \eqref{eq:profiles-L}-\eqref{eq:profiles-end-Lag} given by Theorem \ref{theo:ex-L}. Define
\begin{equation}\label{eq:f3}
    f_3(x):=\frac{s}{2}\left[\frac{\k(V(x))}{f_1(x)}\right]'.
\end{equation}
Assume that $V^->0$, \eqref{eq:imp-ass} holds true and the amplitude $\e:=V^--V^+$ is sufficiently small. 
Then there exist positive constants $C_1,C_2>0$ (independent on $\e$) such that
\begin{equation}\label{eq:f3-bounded}
    0<C_1|V'(x)|\leq f_3(x)\leq C_2|V'(x)|,
\end{equation}
for any $x\in\R$.
\end{lemma}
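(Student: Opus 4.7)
The plan is to differentiate the definition \eqref{eq:f3} explicitly, identify the leading-order term using the expansion of $f_1$ obtained in Lemma \ref{lem:f1}, and then show that the dominant coefficient is controlled from below by $M > 0$ (whence the role of hypothesis \eqref{eq:imp-ass}), while all remaining contributions are of higher order in $\varepsilon$ and may be absorbed into the main term for $\varepsilon$ sufficiently small.

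First, I would compute
\begin{equation*}
f_3 = \frac{s}{2}\cdot\frac{\kappa'(V)V' f_1 - \kappa(V) f_1'}{f_1^2}.
\end{equation*}
Using the decomposition $f_1' = -p''(V)V' + \omega_1$ from \eqref{eq:f1-prime}, this becomes
\begin{equation*}
f_3 = \frac{s V'}{2 f_1^2}\bigl[\kappa'(V) f_1 + \kappa(V) p''(V)\bigr] - \frac{s \kappa(V)}{2 f_1^2}\,\omega_1.
\end{equation*}
Since $f_1 = -p'(V) + \mathcal{O}(\varepsilon^2)$ uniformly in $x$ (by Lemma \ref{lem:f1} and \eqref{eq:estprof-L}), and since $V$ takes values in the interval $[V^+,V^-]$ with $V^- - V^+ = \varepsilon$, the smooth function $v \mapsto \kappa'(v)(-p'(v)) + \kappa(v) p''(v) - M$ vanishes at $v = V^-$ and is of order $\mathcal{O}(\varepsilon)$ on the entire range of $V(x)$. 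Hence
\begin{equation*}
\kappa'(V) f_1 + \kappa(V) p''(V) = M + R(V,\varepsilon), \qquad |R(V,\varepsilon)| \leq C\varepsilon,
\end{equation*}
with $C$ independent of $\varepsilon$.

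Next, I would insert this expansion into the expression for $f_3$ to obtain
\begin{equation*}
f_3 = \frac{s V'}{2 f_1^2}\,M + \frac{s V'}{2 f_1^2}\,R(V,\varepsilon) - \frac{s \kappa(V)}{2 f_1^2}\,\omega_1.
\end{equation*}
Because $s<0$ and $V' < 0$ in the case of a backward 1-shock (Theorem \ref{theo:monotone-L}), one has $sV' = |s|\,|V'| > 0$, so the first term equals $\tfrac{|s| M}{2 f_1^2}|V'|$, which by the two-sided bound \eqref{eq:f1-bounded} on $f_1$ is bounded from below and above by positive constant multiples of $|V'|$, uniformly in $\varepsilon$. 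The two remaining terms are perturbative: by \eqref{eq:est-omega} we have $|\omega_1| \leq C\varepsilon |V'|$, and $|R(V,\varepsilon)|\leq C\varepsilon$, so their combined modulus is bounded by $C' \varepsilon |V'|$ with $C'$ independent of $\varepsilon$.

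Putting these estimates together gives
\begin{equation*}
\Bigl(\frac{|s| M}{2 f_1^2} - C'\varepsilon\Bigr)|V'(x)| \leq f_3(x) \leq \Bigl(\frac{|s| M}{2 f_1^2} + C'\varepsilon\Bigr)|V'(x)|,
\end{equation*}
and choosing $\varepsilon$ small enough so that $C'\varepsilon < \tfrac{|s| M}{4 \sup f_1^2}$ yields \eqref{eq:f3-bounded} with constants independent of $\varepsilon$. The only nontrivial step, which I expect to be the heart of the argument, is precisely the positivity of the leading coefficient: without the hypothesis \eqref{eq:imp-ass} the constant $M$ could vanish or be negative, in which case $f_3$ would fail to satisfy the lower bound. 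Thus \eqref{eq:imp-ass} is exactly the structural condition required to make the energy-type estimates in the sequel close up.
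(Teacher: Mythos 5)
Your proof is correct and follows essentially the same route as the paper's: differentiate $\kappa(V)/f_1$, use $f_1'=-p''(V)V'+\omega_1$ to isolate the leading term $\tfrac{sV'}{2f_1^2}\bigl[-\kappa'(V)p'(V)+\kappa(V)p''(V)\bigr]$, bound the remainder by $C\varepsilon|V'|$ via \eqref{eq:est-omega} and \eqref{eq:f1-bounded}, and invoke \eqref{eq:imp-ass} together with $V(x)\in[V^+,V^-]$ to keep the bracket comparable to $M>0$ for $\varepsilon$ small. The only difference is cosmetic: you expand $\kappa'(V)f_1$ via $f_1=-p'(V)+\mathcal{O}(\varepsilon^2)$, whereas the paper substitutes the exact definition of $f_1$ and collects the extra piece in a remainder $\omega_2$.
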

\begin{proof}
Let $f_3:=f_3(x)$. A direct computation gives
$$f_3=\frac{s}{2}\left[\frac{\k'(V)V'f_1-\k(V)f_1'}{f_1^2}\right].$$
Substituting the definition \eqref{eq:f1} of $f_1$ and \eqref{eq:f1-prime}, we deduce
$$f_3=\frac{sV'}{2f_1^2}\left[-\k'(V)p'(V)+\k(V)p''(V)\right]+\omega_2,$$
where 
\begin{equation*}
    \omega_2:=\frac{sV'}{2f_1^2}\left[\frac{\mu'(V)V-\mu(V)}{V^2}\right]U'\k'(V)-\frac{s}{2f_1^2}\k(V)\omega_1.
    \end{equation*}
Thanks to \eqref{eq:est-omega} and the equality $U'=-sV'$, we get
\begin{equation*}
    |\omega_2| \leq C\e|V'|.
\end{equation*}
Moreover, if $\e$ is sufficiently small,  $\displaystyle\frac{sV'(x)}{2f_1^2(x)}>0$ and 
\begin{equation*}
    0<\frac{M}{2}\leq-\k'(V(x))p'(V(x))+\k(V(x))p''(V(x))\leq 2M,
\end{equation*}
for any $x\in\R$.
Indeed, the first inequality follows from Theorem \ref{theo:monotone-L} and \eqref{eq:f1-bounded}, while
the second one is a consequence of assumption \eqref{eq:imp-ass} and the fact that $V(x)\in[V^+,V^-]$, for any $x\in\R$,
with $V^--V^+=\e$ sufficiently small. 
By combining all the estimates, we conclude that $f_3$ satisfies \eqref{eq:f3-bounded}.
\end{proof}
We have now all the tools to prove Proposition \ref{prop:energy-est}.
\begin{proof}[Proof of Proposition \ref{prop:energy-est}]
By contradiction, let us assume that there exists $\lambda\in \ptsp(\cL)$ with $\Re\lambda\geq0$.
Multiply equation \eqref{eq:eigenvalue2-Lag} by $u^*/f_1$,
where $f_1$ is the function defined in \eqref{eq:f1}
and integrate over $\mathbb{R}$ to obtain
\begin{equation}\label{eq:energy-est-Lag}
    \lambda\int_{\mathbb{R}}\frac{|u|^2}{f_1}\,dx=s\int_{\mathbb{R}}\frac{u^*u'}{f_1}\,dx+\int_{\mathbb{R}}u^*v'\,dx+\int_{\mathbb{R}}\frac{\mu(V)}{Vf_1}u^*u''\,dx+\int_{\mathbb{R}}\frac{u^*L_c v}{f_1}\,dx.
\end{equation}
Integrating by parts and using the first equation \eqref{eq:eigenvalue1-Lag}, we can rewrite the second term in the right hand side of \eqref{eq:energy-est-Lag} as
\begin{equation*}
      \int_{\mathbb{R}}u^*v'\,dx=-\int_{\mathbb{R}}(u')^*v\,dx=-\int_{\mathbb{R}}(\lambda v-sv')^*v\,dx=-\lambda^*\int_{\mathbb{R}}|v|^2\,dx+s\int_{\mathbb{R}}(v')^*v\,dx.
\end{equation*}
By taking the real part of the latter equality, we infer
\begin{equation*}
    \Re\int_{\mathbb{R}}u^*v'\,dx=-(\Re\lambda)\int_{\mathbb{R}}|v|^2\,dx+\frac{s}2\int_{\mathbb{R}}\frac{d}{dx}(|v|^2)\,dx=-(\Re\lambda)\int_{\mathbb{R}}|v|^2\,dx,
\end{equation*}
and, as a consequence, \eqref{eq:energy-est-Lag} becomes
\begin{equation}\label{eq:energy-est-Lag2}
    \begin{aligned}
        (\Re\lambda)\int_{\mathbb{R}}\left[\frac{|u|^2}{f_1}+|v|^2\right]\,dx=&s\Re\int_{\mathbb{R}}\frac{u^*u'}{f_1}\,dx+\Re\int_{\mathbb{R}}\frac{\mu(V)}{Vf_1}u^*u''\,dx\\
        &\quad+\Re\int_{\mathbb{R}}\frac{u^*L_c v}{f_1}\,dx.
    \end{aligned}
\end{equation}
Similarly, one has
\begin{equation}\label{eq:u-u'}
    \Re\int_{\mathbb{R}}\frac{u^*u'}{f_1}\,dx=\frac12\int_{\mathbb{R}}\frac{1}{f_1}\frac{d}{dx}(|u|^2)\,dx=-\frac12\int_{\mathbb{R}}\left(\frac{1}{f_1}\right)'|u|^2\,dx
\end{equation}
and 
\begin{align}
    \Re\int_{\mathbb{R}}\frac{\mu(V)}{Vf_1}u^*u''\,dx&=-\int_{\mathbb{R}}\frac{\mu(V)}{Vf_1}|u'|^2\,dx-\Re\int_{\mathbb{R}}\left[\frac{\mu(V)}{Vf_1}\right]'u^*u'\,dx\notag\\
    &=-\int_{\mathbb{R}}\frac{\mu(V)}{Vf_1}|u'|^2\,dx+\int_{\mathbb{R}}\left[\frac{\mu(V)}{2Vf_1}\right]''|u|^2\,dx.\label{eq:u-u''}
\end{align}
Hence, thanks to \eqref{eq:u-u'}-\eqref{eq:u-u''} the relation \eqref{eq:energy-est-Lag2} becomes
\begin{equation}\label{eq:energy-est-Lag3}
        (\Re\lambda)\int_{\mathbb{R}}\left[\frac{|u|^2}{f_1}+|v|^2\right]\,dx+\int_{\mathbb{R}}\frac{\mu(V)}{Vf_1}|u'|^2\,dx+\int_{\mathbb{R}}f_2|u|^2\,dx=\Re\int_{\mathbb{R}}\frac{u^*L_c v}{f_1}\,dx,
\end{equation}
where $f_2$ is the function defined in \eqref{eq:f2}.
It remains to analyze the right hand side  of \eqref{eq:energy-est-Lag3}: the definition \eqref{eq:cap} implies
\begin{align*}
    \Re\int_{\mathbb{R}}\frac{u^*L_c v}{f_1}\,dx=&-\Re\int_{\mathbb{R}}\frac{\k(V)}{f_1}u^*v'''\,dx-\Re\int_{\mathbb{R}}\frac{\k'(V)V''}{f_1}u^*v'\,dx\\
    &\quad-\Re\int_{\mathbb{R}}\frac{\k'(V)V'}{f_1}u^*v''\,dx-\Re\int_{\mathbb{R}}\frac{\k''(V)(V')^2}{2f_1}u^*v'\,dx.
\end{align*}
Integration by parts gives
\begin{align*}
    \Re\int_{\mathbb{R}}\frac{u^*L_c v}{f_1}\,dx&=\Re\int_{\mathbb{R}}\frac{\k(V)}{f_1}(u')^*v''\,dx+\Re\int_{\mathbb{R}}\left[\frac{\k(V)}{f_1}\right]'u^*v''\,dx\\
    &\quad-\Re\int_{\mathbb{R}}\left[\frac{\k'(V)V''}{f_1}+\frac{\k''(V)(V')^2}{2f_1}\right]u^*v'\,dx\\
    &\quad+\Re\int_{\mathbb{R}}\frac{\k'(V)V'}{f_1}(u')^*v'\,dx+\Re\int_{\mathbb{R}}\left[\frac{\k'(V)V'}{f_1}\right]'u^*v'\,dx\\
    &=-\Re\int_{\mathbb{R}}\frac{\k(V)}{f_1}(u'')^*v'\,dx-2\Re\int_{\mathbb{R}}\left[\frac{\k(V)}{f_1}\right]'(u')^*v'\,dx\\
    &\quad+\Re\int_{\mathbb{R}}\frac{\k'(V)V'}{f_1}(u')^*v'\,dx+\Re\int_{\mathbb{R}}g_1u^*v'\,dx, 
\end{align*}
where
\begin{align*}
    g_1&:=-\left[\frac{\k(V)}{f_1}\right]''+\frac{\k''(V)(V')^2}{2f_1}+\k'(V)V'\left(\frac{1}{f_1}\right)'\\
    &=-\frac{\k'(V)V''}{f_1}-\k(V)\left(\frac{1}{f_1}\right)''-\frac{\k''(V)(V')^2}{2f_1}-\k'(V)V'\left(\frac{1}{f_1}\right)'.
\end{align*}
From \eqref{eq:eigenvalue1-Lag}, it follows that
\begin{equation*}
u''=\lambda v'-sv''
\end{equation*}
and, as a consequence,
\begin{align*}
    \Re\int_{\mathbb{R}}\frac{u^*L_c v}{f_1}\,dx&=- (\Re \lambda)\int_{\mathbb{R}}\frac{\k(V)}{f_1}|v'|^2\,dx-\frac{s}{2}\int_{\mathbb{R}}\left[\frac{\k(V)}{f_1}\right]'|v'|^2\,dx\\
    &\quad +\Re\int_{\mathbb{R}}g_1u^*v'\,dx-\Re\int_{\mathbb{R}}g_2(u')^*v'\,dx,
\end{align*}
where 
\begin{equation*}
    g_2:=\frac{\k'(V)V'}{f_1}+2\k(V)\left(\frac{1}{f_1}\right)'.
\end{equation*}
By substituting into \eqref{eq:energy-est-Lag3}, we end up with
\begin{align*}
        (\Re\lambda)\int_{\mathbb{R}}&\left[\frac{|u|^2}{f_1}+|v|^2+\frac{\k(V)}{f_1}|v'|^2\right]\,dx+\int_{\mathbb{R}}\frac{\mu(V)}{Vf_1}|u'|^2\,dx+\int_{\mathbb{R}}f_2|u|^2\,dx\\
        &\qquad \qquad+\int_{\mathbb{R}}f_3|v'|^2\,dx=\Re\int_{\mathbb{R}}g_1u^*v'\,dx-\Re\int_{\mathbb{R}}g_2(u')^*v'\,dx,
\end{align*}
where $f_3$ is the function defined in \eqref{eq:f3}.
The left hand side of the latter equality can be bounded from below by using \eqref{eq:f1-bounded}, \eqref{eq:f2-bounded} and \eqref{eq:f3-bounded}, while we can apply Young inequality to the right hand side; thus, there exists a constant $C>0$, independent on $\e$, such that
\begin{equation}\label{eq:nergy-est-Lag4}
    \begin{aligned}
         (\Re\lambda)\int_{\mathbb{R}}&\left[\frac{|u|^2}{f_1}+|v|^2+\frac{\k(V)}{f_1}|v'|^2\right]\,dx+C\int_{\mathbb{R}}|u'|^2\,dx+C\int_{\mathbb{R}}|V'||u|^2\,dx\\
        &\qquad +C\int_{\mathbb{R}}|V'||v'|^2\,dx\leq\frac12\int_{\mathbb{R}}|g_1||u|^2\,dx+\frac12\int_{\mathbb{R}}|g_1||v'|^2\,dx\\
        &\qquad \qquad \qquad \qquad\qquad\qquad+\eta_1\int_{\mathbb{R}}|u'|^2\,dx+\eta_2\int_\mathbb{R}|g_2|^2|v'|^2\,dx,
    \end{aligned}
\end{equation}
for some positive constants $\eta_1,\eta_2$ to be appropriately chosen later.
By using the definitions of $g_1$ and $g_2$, Theorem \ref{theo:monotone-L} and Lemma \ref{lem:f1}, we end up with
\begin{equation*}
    |g_1|\leq c\e|V'|, \qquad \qquad |g_2|\leq c|V'|,
\end{equation*}
for come positive constant $c$, which does not depend on $\e$. 
Hence, the energy estimate \eqref{eq:nergy-est-Lag4} becomes
\begin{equation*}
    \begin{aligned}
         (\Re\lambda)\int_{\mathbb{R}}\left[\frac{|u|^2}{f_1}+|v|^2+\frac{\k(V)}{f_1}|v'|^2\right]\,dx&+(C-\eta_1)\int_{\mathbb{R}}|u'|^2\,dx\\
         &+\left(C-\frac{c\e}{2}\right)\int_{\mathbb{R}}|V'||u|^2\,dx\\
        & +\left(C-\frac{c\e}{2}-\eta_2c_1\e^2\right)\int_{\mathbb{R}}|V'||v'|^2\,dx\leq0.
    \end{aligned}
\end{equation*}
In conclusion, by choosing $\e,\eta_1>0$ sufficiently small, we obtain a contradiction, namely, $\Re\lambda$ cannot be positive and the proof is complete.
\end{proof}
\begin{remark}\label{rem:extraass}
As we have already pointed out above, this spectral stability result under condition \eqref{eq:imp-ass} generalizes the result by Humpherys in \cite{Hu09}, where the viscosity and capillarity coefficients are constant, and therefore the condition is trivially satisfied in view of the positivity of (the constant) $\k$ and the convexity of $p$; surprisingly, our condition does not depend on the magnitude of the viscosity coefficient $\mu$. Moreover, in order to inspect and to better understand the meaning of this assumption, let us recast it in the case of coefficients obeying power laws. Recalling \eqref{eq:relationlagrangian}, the well-known $\gamma$-law pressure law becomes 
$p(v) = v^{-\gamma}$,  $\gamma\geq1$,
and
$\k(v) = v^{-\beta-5}$,
assuming a power law of the form $\tilde{\k}(\rho) = \rho^\beta$ for the capillarity coefficient in Eulerian coordinates, which depends on the density $\rho=1/v$. 
Hence, condition  \eqref{eq:imp-ass}  rewrites
\begin{equation*}
(-\gamma(\beta+5) +\gamma(\gamma+1))[(V^-)]^{-\gamma-\beta-7}>0,
\end{equation*}
that is,
\begin{equation*}
\gamma >\beta+4,
\end{equation*}
being $\gamma$ in particular positive.
As a matter of fact, this relation requires high powers for the adiabatic exponent, with respect to the one of  the capillarity coefficient $\tilde{\k}(\rho) = \rho^\beta$; for instance, in the quantum hydrodynamic case, we have $\beta=-1$ and therefore the pressure should verify a $\gamma$--law with $\gamma>3$, outside of the usual physical meaning range. Clearly,  more singular capillarity coefficients, i.e.\  $\beta<-3$, allows for any values of $\gamma\geq1$; the latter case includes the one with constant capillarity, obtained for  $\beta=-5$.
\end{remark}
%

\section*{Acknowledgements}

The authors are grateful to Delyan Zhelyazov for many interesting discussions. 
The work of R. Folino was partially supported by DGAPA-UNAM, program PAPIIT, grant IN-103425.
The work of R. G. Plaza was partially supported by SECIHTI, Mexico, grant CF-2023-G-122.

\appendix
\section{Results in Eulerian coordinates}
\label{Eulerian}

In this Appendix we briefly describe the results concerning viscous-dispersive shock profiles in the case of the Navier--Stokes--Korteweg system in Eulerian coordinates. 
 Since the strategy and arguments of the proofs are the same of  those presented in previous sections, we gloss over many details and focus on the main differences with the Lagrangian formulation.

 First of all, let us recall the Navier--Stokes--Korteweg system in Eulerian coordinates:
\begin{equation}
\label{vK}
\begin{cases}
\rho_t + m_x = 0,\\
m_t + \left ( \frac{m^2}{\rho} + p(\rho) \right)_x = \left ( \mu(\rho) \left ( \frac{m}{\rho} \right )_x + K(\rho, \rho_x) \right )_x.
\end{cases}
\end{equation}
In \eqref{vK}, 
we recall that $\rho:=\rho(x,t) > 0$ is the density, $u:=u(x,t)$ is the velocity and $m:= \rho u$ is the momentum, $\mu(\rho)>0$ is a (positive) viscosity coefficient depending on the density, which is assumed to be sufficiently smooth, and
\begin{equation*}
K(\rho,\rho_x) = \rho \k(\rho) \rho_{xx} + \frac{1}{2}(\rho \k'(\rho) - \k(\rho)) \rho_x^2
\end{equation*}
is the Korteweg capillarity tensor, where $\k(\rho)>0$ is a (positive) sufficiently smooth function. Finally, for the sufficiently smooth pressure function $p(\rho)$,  we assume 
\begin{equation}
\label{cond_pressure}
p'(\rho) > 0, \quad p''(\rho) > 0, \qquad \mbox{ for }\rho > 0.
\end{equation}
Traveling wave profiles are solutions of \eqref{vK} of the form
\begin{equation*}
    \rho(x,t) = R(x - st), \qquad m(x,t) = J(x - st), 
\end{equation*}
where $s \in \R$ is the speed of the traveling wave, with prescribed end states at $\pm\infty$: 
\begin{equation}\label{eq:profiles-end}
    R^\pm = \lim_{y \rightarrow \pm \infty} R(y), \qquad  J^\pm = \lim_{y \rightarrow \pm \infty} J(y),
\end{equation}
where we introduced the parameter along the profile $y = x - s t$.
Moreover, the end states $(R^\pm,J^\pm)$ and the speed $s$ are assumed to define a compressive shock of the first or second family for the underlying Euler equation
\begin{equation}\label{Euler_system}
\begin{cases}
\rho_t + (\rho u)_x = 0,\\
( \rho u) _t + \left ({\rho u^2}+ p(\rho) \right )_x = 0.
\end{cases}
\end{equation}
Clearly, the profiles $R$, $J$ solve
\begin{equation}\label{eq:profiles}
\begin{cases}
    -s R' + J' = 0,\\
-s J' + \left ( \frac{J^2}{R} + p(R) \right ) ' = \left ( \mu(R) \left (\frac{J}{R} \right) ' + K (R, R') \right )',
\end{cases}
\end{equation}
where
\begin{equation*}
K(R,R') = R \k(R) R'' + \frac{1}{2} (R \kappa'(R) - \k(R)) (R')^2.
\end{equation*}
The existence of viscous--dispersive shock profiles states as follows.
\begin{theorem}\label{theo:ex}
    Assume  the end states $(R^\pm,J^\pm)$ and the speed $s$ define a compressive $1$--shock, respectively $2$--shock, for the Euler equation \eqref{Euler_system}. Suppose that the viscosity coefficient $\mu$ and $\kappa$ are smooth, positive functions of $\rho>0$ and assume the pressure verifies \eqref{cond_pressure}. Then, there exists a unique (up to translation) traveling profile solving \eqref{eq:profiles-end} and \eqref{eq:profiles}. 
    Moreover, this travelling wave is oscillating whenever
    \begin{equation}
\label{osccond-E}
0 < \eta(R^-) < \frac{2 \sqrt{-{(R^-)}^5\widehat{f}'(R^-)}}{|A| }, \ \hbox{respectively}\ 
0 < \eta(R^+) < \frac{2 \sqrt{-{(R^+)}^5\widehat{f}'(R^+)}}{|A| }.
\end{equation}
\end{theorem}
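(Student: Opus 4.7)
The plan is to mirror the arguments of Theorems \ref{theo:ex-L} and \ref{theo:ex-2-L} after reducing \eqref{eq:profiles} to a planar ODE for the density profile. Integrating the continuity equation gives $J = sR + A$ with the mass-flux constant $A := J^\pm - sR^\pm$, and integrating the momentum equation once produces a second-order scalar ODE for $R$ in which the viscous contribution is proportional to $A\mu(R) R'/R^2$ and the Korteweg tensor retains the form $R\k(R) R'' + \tfrac12 (R\k'(R) - \k(R))(R')^2$. Setting $Q=R'$, I would recast this as a $2\times 2$ dynamical system, the Eulerian analogue of \eqref{eq:V-Q}, whose equilibria are $(R^\pm,0)$ because $\hat{f}(R) := A^2/R + p(R) + sA - B$ vanishes at the end states. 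Convexity of $p$ and positivity of $A^2/R$ give $\hat{f}''>0$, hence exactly one equilibrium is a hyperbolic saddle and the other is unstable; the nature (node versus focus) of the latter is governed by the sign of the discriminant of the linearization, and condition \eqref{osccond-E} is precisely the threshold at which this discriminant turns negative, yielding an oscillatory profile.

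I would then introduce the auxiliary system obtained by dropping the viscous term and look for a conserved energy of the form $H(R,Q) = \hat{F}(R) + g(R)Q^2$. Differentiating along the reduced flow and matching the $Q$ and $Q^3$ coefficients forces $g'/g = \k'/\k - 1/R$, hence $g(R) = \k(R)/(2R)$ up to a multiplicative constant, together with $\hat{F}'(R) = -\hat{f}(R)/R^2$. This yields the conserved energy
\[
H(R,Q) = -\int_{R_\star}^R \frac{\hat{f}(z)}{z^2}\,dz + \frac{\k(R)}{2R}\, Q^2,
\]
where $R_\star$ denotes the saddle endpoint. The same convexity argument used to prove \eqref{eq:FatbarV} furnishes a unique $\bar{R}$ on the opposite side of the non-saddle equilibrium at which the primitive vanishes, so that $\{H=0\}$ is a closed homoclinic loop of the auxiliary system exiting the saddle and enclosing the non-saddle equilibrium.

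The remaining steps transfer almost verbatim. A direct computation gives $\frac{d}{dy}H(R,Q) = A\,\mu(R)\, Q^2/R^4$ along the full flow, whose sign is fixed by the sign of $A$; together with the eigenvector magnitude comparison analogous to \eqref{eq:magnitude-u-L}-\eqref{eq:magnitude-s-L}, this proves the Eulerian counterparts of Lemmas \ref{lem:neg-invariant-L} and \ref{lem:comp-manifolds-L}. The heteroclinic trajectory is then extracted along the stable manifold of the saddle by running the dynamics in the appropriate time direction (backward or forward, depending on the sign of $A$, i.e., on the shock family), and LaSalle's invariance principle applied to the Lyapunov functional $L(R,Q) := H(R,Q) - \hat{F}(R_\sharp)$, where $R_\sharp$ denotes the non-saddle equilibrium, forces convergence to $(R_\sharp,0)$, delivering the required traveling profile. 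The oscillation criterion \eqref{osccond-E} then follows from the explicit form of the linearization discriminant computed at the non-saddle equilibrium.

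The principal technical subtlety lies in identifying the correct multiplier $g(R) = \k(R)/(2R)$, which differs from its Lagrangian counterpart $g(V) = \k(V)/2$ precisely because of the factor $R$ in $K(R,R') = R\k(R)R'' + \cdots$; once this is fixed, the geometric mechanism carries over without further surprises. An attractive alternative I would also consider is to exploit the pullback $v = 1/\rho$ together with the thermodynamic correspondences \eqref{eq:relationlagrangian} and simply transfer Theorems \ref{theo:ex-L} and \ref{theo:ex-2-L} to the Eulerian setting; this avoids redoing the entire dynamical-systems analysis, but requires verifying that compressive $k$-shocks and Lax entropy conditions are preserved by the change of variables and that the oscillation thresholds transform correctly into \eqref{osccond-E}.
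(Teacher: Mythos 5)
Your proposal is correct and follows essentially the same route as the paper, which proves this theorem by repeating the Lagrangian construction \emph{mutatis mutandis} for the planar system \eqref{eq:R-Q} with the convex function $\widehat{f}$ and the auxiliary system \eqref{eq:R-Q-reduced}, reading the oscillation condition \eqref{osccond-E} off the sign of the discriminant of the linearization at the non-saddle equilibrium. You in fact supply the one ingredient the appendix leaves implicit, namely the conserved energy with multiplier $g(R)=\k(R)/(2R)$ and $\widehat{F}'(R)=-\widehat{f}(R)/R^2$; only note that for the $2$--shock the heteroclinic leaves the saddle along its \emph{unstable} manifold in forward time, and that your sign convention for $A$ is opposite to the paper's, which is harmless since \eqref{osccond-E} involves only $|A|$.
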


The strategy of the proof of this result is the same used in the previous sections and we briefly describe it here below for the case of a compressive $1$--shock. Using Rankine--Hugoniot conditions, \eqref{eq:profiles} can be rewritten as
\begin{equation}\label{eq:R-Q}
    \begin{cases}
        R' = Q,\\
        Q' = \displaystyle\frac{ \widehat{f}(R)}{R \k(R)} - \frac{A \mu(R)}{R^3 \k(R)} Q - \frac{1}{2} \left ( \frac{\k'(R)}{\kappa(R)} - \frac{1}{R} \right ) Q^2,
    \end{cases}     
\end{equation}
where 
\begin{equation*}\label{eq:f(R)}
     \widehat{f}(R) := p(R) - p(R^-) +\frac{A^2}{R} - \frac{A^2}{R^-}
\end{equation*}
and
\begin{equation*}\label{def_A}
     A := sR^+-J^+=sR^--J^-.
\end{equation*}
The auxiliary system in this case is then given by
\begin{equation}\label{eq:R-Q-reduced}
    \begin{cases}
        R' = Q,\\
        Q' = \displaystyle\frac{ \widehat{f}(R)}{R \k(R)} - \frac{1}{2} \left ( \frac{\k'(R)}{\kappa(R)} - \frac{1}{R} \right ) Q^2,
    \end{cases}     
\end{equation}
obtained from \eqref{eq:R-Q} after removing the ``viscous'' term $- (A \mu(R)/R^3 \k(R)) Q$. 
Then, the constant $A$ is negative for a compressive $1$--shock, while the convex function $\widehat{f}$ plays the role of the convex function $f$ defined in the case of Lagrangian coordinates, and   it verifies
\begin{equation*}
    \widehat{f}(R)<0,\ \hbox{for any}\ R\in(R^-,R^+);\qquad  \widehat{f}'(R^-)<0, \  \widehat{f}'(R^+)>0.
\end{equation*}
As a consequence,  the arguments leading to the existence of the profiled described above can be easily carried out also in the present case \emph{mutatis mutandis}; see also Figure \ref{fig:comp-manifolds}.
\begin{figure}
\centering
\includegraphics{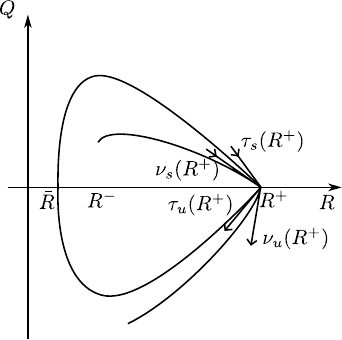}
\caption{Stable and unstable manifolds for \eqref{eq:R-Q} and \eqref{eq:R-Q-reduced} at $(R^+,0)$.}
\label{fig:comp-manifolds}
\end{figure}
Finally, the condition in \eqref{osccond-E} is equivalent to the fact that the
the discriminant evaluated at the equilibrium point $(R^-,0)$ inside the  (negatively for compressive $1$--shocks) invariant  region for \eqref{eq:R-Q}  is negative, so that 
this point is a stable focus, which implies in particular the profile is oscillating.

The analysis of monotonicity of the profiles for small shocks is  valid also for the model in Eulerian coordinates, and the corresponding results can be  proved  following the same lines of Section \ref{sec:properties-small-profiles-L}; we omit all details here.
\begin{theorem}\label{theo:monotone}
    Assume  the end states $(R^\pm,J^\pm)$ and the speed $s$ define a  $1$--shock (resp.\ $2$--shock) for the Euler equation \eqref{Euler_system} and assume its amplitude is sufficiently small: $0< \varepsilon = R^+ - R^- \ll 1$ (resp.\ 
    $0< \varepsilon = R^- - R^+ \ll 1$).
    Then,  the profile $R$ constructed in Theorem \ref{theo:ex} is monotone increasing (resp.\ decreasing) and it satisfies the following properties for any $y\in\mathbb{R}$:
    \begin{align*}
    &0<R'(y)<C\e^2,\ \hbox{resp.\ } -c\e^2<R'(y)<0,\\
    & |R''(y)|\leq C\e R'(y),
\end{align*}
where $C$ is a positive constant independent from $\varepsilon$.
\end{theorem}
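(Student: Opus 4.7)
The plan is to mimic the geometric singular perturbation analysis carried out in the Lagrangian case (Theorem \ref{theo:monotone-L}), with the system \eqref{eq:R-Q} playing the role of \eqref{eq:V-Q} and with $\widehat{f}$ replacing $f$. I focus on the $1$-shock case (for which $A<0$ and $R^+>R^-$); the $2$-shock case is symmetric.

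First, I would introduce the rescaling
\[
R(y) = \varepsilon\, z(\varepsilon y)+\tfrac{1}{2}(R^+ + R^-), \qquad \tilde z(x) := z(x)-\tfrac12, \qquad x:=\varepsilon y,
\]
so that $R(y) = R^- + \varepsilon\,\tilde z(\varepsilon y)$, $R'(y) = \varepsilon^{2}\dot{\tilde z}(\varepsilon y)$, and the asymptotic end states $R^\pm$ become $\mp\tfrac12$ for $\tilde z$. Setting $w:=\dot{\tilde z}$, the second-order ODE \eqref{eq:R-Q} becomes the slow-fast system
\begin{equation*}
\begin{cases}
\dot{\tilde z} = w,\\
\varepsilon\, \dot w = \widehat F(\tilde z, w,\varepsilon),
\end{cases}
\end{equation*}
where the right-hand side $\widehat F$ is obtained by substituting the ansatz into \eqref{eq:R-Q} and collecting the $1/\varepsilon^{2}$ factor carried by $\widehat f/(R\kappa(R))$; the viscous term produces an $O(1)$ contribution after using $A=sR^-+O(\varepsilon)$, while the $Q^{2}$ term is $O(\varepsilon^{2})$.

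The heart of the argument is the Taylor expansion of $\widehat f(R^-+\varepsilon\tilde z)$ in $\varepsilon$. A direct computation, using $\widehat f(R^-) = 0$ together with the Rankine--Hugoniot identity relating $R^\pm$ to $A$ (which encodes $\widehat f(R^+)=0$), gives $\widehat f(R^-) = \widehat f'(R^-) = O(\varepsilon)$ and
\[
\widehat f(R^-+\varepsilon\tilde z) = \tfrac{1}{2} \widehat f''(R^-)\,\tilde z(1+\tilde z)\,\varepsilon^{2}+O(\varepsilon^{3}),
\]
just as in the Lagrangian case; the convexity assumption \eqref{cond_pressure} on $p$ guarantees $\widehat f''(R^-) = p''(R^-)+2A^{2}/(R^-)^{3}>0$. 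Substituting this expansion yields
\[
\widehat F(\tilde z,w,0) = -\widehat C(R^-)\,\tilde z(1+\tilde z)+\widehat D(R^-)\, w,
\]
with $\widehat D(R^-) > 0$ by the sign of $-A$ (for a $1$-shock, $A<0$), identifying the critical manifold
$\mathcal{M}_{0} = \{\widehat F(\tilde z,w,0)=0\} = \{w = h_{0}(\tilde z)\}$,
with $h_{0}(\tilde z) = \widehat C(R^-)\,\tilde z(1+\tilde z)/\widehat D(R^-)$.

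Next, I would invoke Fenichel's geometric singular perturbation theorem: since $\mathcal{M}_{0}$ is normally hyperbolic (the linearization of $\widehat F$ in $w$ equals $\widehat D(R^-)\neq 0$), for $\varepsilon$ small there exists a smooth slow manifold
$\mathcal{M}_\varepsilon = \{w = h_0(\tilde z) + \varepsilon h_1(\tilde z,\varepsilon)\}$
invariant for the perturbed flow, with the heteroclinic connection $(\tilde z(x),w(x))$ confined to $\mathcal{M}_\varepsilon$. On $\mathcal{M}_\varepsilon$ the reduced equation reads
\[
\dot{\tilde z} = \frac{\widehat C(R^-)}{\widehat D(R^-)}\,\tilde z(1+\tilde z)+\varepsilon\, h_{1}(\tilde z,\varepsilon),
\]
which is a small regular perturbation of the logistic flow $\dot{\tilde z}=\text{const}\cdot\tilde z(1+\tilde z)$. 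Its unique (up to translations) heteroclinic joining $\tilde z=0$ to $\tilde z=-1$ is strictly monotone decreasing, hence $\tilde z$ is monotone decreasing, $R=R^-+\varepsilon\tilde z$ is monotone \emph{increasing} from $R^-$ to $R^+=R^-+\varepsilon$, and $\dot{\tilde z}$ is bounded, yielding $0<R'(y)=\varepsilon^{2}\dot{\tilde z}(\varepsilon y)<C\varepsilon^{2}$. The second estimate follows by differentiating the reduced equation, which gives $\ddot{\tilde z} = [h_{0}'(\tilde z)+\varepsilon\partial_{\tilde z}h_{1}]\,\dot{\tilde z}$ and hence $|\ddot{\tilde z}|\leq C|\dot{\tilde z}|$; translating via $R''(y)=\varepsilon^{3}\ddot{\tilde z}(\varepsilon y)$ and $R'(y)=\varepsilon^{2}\dot{\tilde z}(\varepsilon y)$ yields $|R''(y)|\leq C\varepsilon|R'(y)|$.

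The main obstacle is verifying the normal hyperbolicity of $\mathcal{M}_0$, which depends on carefully tracking the sign of the coefficient $\widehat D(R^-)$ coming from the viscous term; the identity $U'=-sV'$ is not available here, so one must expand $A\mu(R)/(R^{3}\kappa(R))$ at $R=R^-$ using $A<0$ to confirm that the relevant eigenvalue has the correct sign (and analogously for the $2$-shock case with $A>0$ and expansion at $R^+$). The $2$-shock case is handled by the same rescaling centered at $R^+$ (with $\tilde z = z-1/2$ replaced by $\tilde z = z+1/2$ and the opposite monotonicity), and the conclusions follow symmetrically.
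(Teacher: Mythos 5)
Your strategy is exactly the one the paper intends: it omits the proof of Theorem \ref{theo:monotone} precisely because it follows the lines of Theorem \ref{theo:monotone-L} (rescaling $x=\e y$, quadratic Taylor expansion of the nonlinearity exploiting its double zero at the end states, normal hyperbolicity of the critical manifold, Fenichel's theorem, logistic-type reduced equation, and derivative bounds along the slow manifold). However, as written your sign bookkeeping is inconsistent and the argument literally derives the wrong monotonicity. First, since $\e=R^+-R^-$, your change of variables $R=\e z+\tfrac12(R^++R^-)$ with $\tilde z=z-\tfrac12$ gives $R=R^++\e\tilde z$, not $R=R^-+\e\tilde z$ (indeed $\tfrac{\e}{2}+\tfrac12(R^++R^-)=R^+$); so the expansion $\widehat f=\tfrac12\widehat f''\,\tilde z(1+\tilde z)\e^2+O(\e^3)$ is correct only if centered at $R^+$, while centering at $R^-$ would force the quadratic $\tilde z(\tilde z-1)$. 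Second, unlike the Lagrangian system \eqref{eq:V-Q}, where the nonlinearity enters as $-f(V)/\k(V)$, in \eqref{eq:R-Q} it enters with a plus sign, $+\widehat f(R)/(R\k(R))$. Hence the fast field at $\e=0$ is $\widehat F(\tilde z,w,0)=+\tfrac{\widehat f''(R^+)}{2R^+\k(R^+)}\,\tilde z(1+\tilde z)+\widehat D\,w$ with $\widehat D=-A\mu(R^+)/\big((R^+)^3\k(R^+)\big)>0$ (using $A<0$), and the reduced equation is $\dot{\tilde z}=-(\widehat C/\widehat D)\,\tilde z(1+\tilde z)+\e h_1$, whose heteroclinic increases from $\tilde z=-1$ at $-\infty$ to $\tilde z=0$ at $+\infty$. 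With your sign (a minus in front of $\widehat C$) you instead get $\dot{\tilde z}<0$, and your conclusion becomes self-contradictory: you assert simultaneously that $\tilde z$ is decreasing, that $R=R^-+\e\tilde z$ is increasing, and that $0<R'=\e^2\dot{\tilde z}$.

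These are mechanical slips rather than a missing idea: once the centering and the sign of the $\widehat f$-term are corrected, $\dot{\tilde z}>0$, so $0<R'(y)=\e^2\dot{\tilde z}(\e y)<C\e^2$, and $|R''|\le C\e R'$ follows by differentiating along the slow manifold, exactly as in the proof of Theorem \ref{theo:monotone-L}. Two further small points: the parenthetical ``$A=sR^-+O(\e)$'' is false ($A=sR^--J^-$ exactly, and $J^-$ is not small); the only facts used are $A\neq0$ (which already gives normal hyperbolicity of $\mathcal{M}_0$, since $\partial_w\widehat F=\widehat D\neq0$) and $A<0$ for the $1$-shock, which fixes the direction of the reduced flow. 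Also, the assertion that the heteroclinic constructed in Theorem \ref{theo:ex} lies on (or exponentially close to) $\mathcal{M}_\e$ deserves a word of justification, though the paper's Lagrangian proof makes the same implicit step.
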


Finally, concerning  stability properties of the spectrum of linearized operator around the profile, we emphasize that the analysis of the dispersion relation and its consequences in terms of the stability of essential spectrum by means of the study of Fredholm curves done in Section \ref{sec:essential-L} can be carried out  without significant modifications in full generality also in Eulerian coordinates. 
On the other hand, at the present stage, the validity of point spectral stability in this case  is not  proved even for small shocks. For this reasons, we consider the  results  concerning spectral properties of viscous--dispersive shocks in Eulerian coordinates  presently available too partial, and therefore we skip all further details in this context.


\bibliography{bibliography}

\def\cprime{$'\!\!$}\def\cprime{$'\!\!$}\def\cprime{$'\!\!$}
\begin{thebibliography}{10}

\bibitem{BrDjL03}
{\sc D.~Bresch, B.~Desjardins, and C.-K. Lin}, {\em On some compressible fluid
  models: {K}orteweg, lubrication, and shallow water systems}, Commun. Partial
  Differ. Equ. \textbf{28} (2003), no.~3-4, pp.~843--868.

\bibitem{BrGL-V19}
{\sc D.~Bresch, M.~Gisclon, and I.~Lacroix-Violet}, {\em On
  {N}avier-{S}tokes-{K}orteweg and {E}uler-{K}orteweg systems: application to
  quantum fluids models}, Arch. Ration. Mech. Anal. \textbf{233} (2019), no.~3,
  pp.~975--1025.

\bibitem{CCD15}
{\sc Z.~Chen, X.~Chai, B.~Dong, and H.~Zhao}, {\em Global classical solutions
  to the one-dimensional compressible fluid models of {K}orteweg type with
  large initial data}, J. Differ. Equ. \textbf{259} (2015), no.~8,
  pp.~4376--4411.

\bibitem{CHZ15}
{\sc Z.~Chen, L.~He, and H.~Zhao}, {\em Nonlinear stability of traveling wave
  solutions for the compressible fluid models of {K}orteweg type}, J. Math.
  Anal. Appl. \textbf{422} (2015), no.~2, pp.~1213--1234.

\bibitem{CLS19}
{\sc Z.~Chen, Y.~Li, and M.~Sheng}, {\em Asymptotic stability of viscous shock
  profiles for the 1{D} compressible {N}avier-{S}tokes-{K}orteweg system with
  boundary effect}, Dyn. Partial Differ. Equ. \textbf{16} (2019), no.~3,
  pp.~225--251.

\bibitem{ChZha14}
{\sc Z.~Chen and H.~Zhao}, {\em Existence and nonlinear stability of stationary
  solutions to the full compressible {N}avier-{S}tokes-{K}orteweg system}, J.
  Math. Pures Appl. (9) \textbf{101} (2014), no.~3, pp.~330--371.

\bibitem{Da4e}
{\sc C.~M. Dafermos}, {\em Hyperbolic conservation laws in continuum physics},
  vol.~325 of Grundlehren der Mathematischen Wissenschaften, Springer-Verlag,
  Berlin, fourth~ed., 2016.

\bibitem{DD01}
{\sc R.~Danchin and B.~Desjardins}, {\em Existence of solutions for
  compressible fluid models of {K}orteweg type}, Ann. Inst. H. Poincar\'e Anal.
  Non Lin\'eaire \textbf{18} (2001), no.~1, pp.~97--133.

\bibitem{DS85}
{\sc J.~E. Dunn and J.~Serrin}, {\em On the thermomechanics of interstitial
  working}, Arch. Ration. Mech. Anal. \textbf{88} (1985), no.~2, pp.~95--133.

\bibitem{FNP24}
{\sc R.~Folino, A.~Naumkina, and R.~G. Plaza}, {\em Instability of periodic
  waves for the {K}orteweg--de {V}ries--{B}urgers equation with monostable
  source}, Phys. D \textbf{467} (2024), no.~134234, pp.~1--10.

\bibitem{FPZ22}
{\sc R.~Folino, R.~G. Plaza, and D.~Zhelyazov}, {\em Spectral stability of
  small-amplitude dispersive shocks in quantum hydrodynamics with viscosity},
  Commun. Pure Appl. Anal. \textbf{21} (2022), no.~12, pp.~4019--4040.

\bibitem{FPZ23}
\leavevmode\vrule height 2pt depth -1.6pt width 23pt, {\em Spectral stability
  of weak dispersive shock profiles for quantum hydrodynamics with nonlinear
  viscosity}, J. Differ. Equ. \textbf{359} (2023), pp.~330--364.

\bibitem{FrKo17}
{\sc H.~Freist\"{u}hler and M.~Kotschote}, {\em Phase-field and {K}orteweg-type
  models for the time-dependent flow of compressible two-phase fluids}, Arch.
  Ration. Mech. Anal. \textbf{224} (2017), no.~1, pp.~1--20.

\bibitem{FrKo19}
\leavevmode\vrule height 2pt depth -1.6pt width 23pt, {\em Phase-field
  descriptions of two-phase compressible fluid flow: interstitial working and a
  reduction to {K}orteweg theory}, Quart. Appl. Math. \textbf{77} (2019),
  no.~3, pp.~489--496.

\bibitem{GLZh20}
{\sc J.~Gao, Z.~Lyu, and Z.-a. Yao}, {\em Lower bound of decay rate for
  higher-order derivatives of solution to the compressible fluid models of
  {K}orteweg type}, Z. Angew. Math. Phys. \textbf{71} (2020), no.~4, p.~108.

\bibitem{GZ98}
{\sc R.~A. Gardner and K.~Zumbrun}, {\em The gap lemma and geometric criteria
  for instability of viscous shock profiles}, Comm. Pure Appl. Math.
  \textbf{51} (1998), no.~7, pp.~797--855.

\bibitem{GLT17}
{\sc J.~Giesselmann, C.~Lattanzio, and A.~E. Tzavaras}, {\em Relative energy
  for the {K}orteweg theory and related {H}amiltonian flows in gas dynamics},
  Arch. Ration. Mech. Anal. \textbf{223} (2017), no.~3, pp.~1427--1484.

\bibitem{Go86}
{\sc J.~Goodman}, {\em Nonlinear asymptotic stability of viscous shock profiles
  for conservation laws}, Arch. Ration. Mech. Anal. \textbf{95} (1986), no.~4,
  pp.~325--344.

\bibitem{Go91}
\leavevmode\vrule height 2pt depth -1.6pt width 23pt, {\em Remarks on the
  stability of viscous shock waves}, in Viscous profiles and numerical methods
  for shock waves (Raleigh, NC, 1990), M.~Shearer, ed., SIAM, Philadelphia, PA,
  1991, pp.~66--72.

\bibitem{HaSl83}
{\sc R.~Hagan and M.~Slemrod}, {\em The viscosity-capillarity criterion for
  shocks and phase transitions}, Arch. Ration. Mech. Anal. \textbf{83} (1983),
  no.~4, pp.~333--361.

\bibitem{HKKL25}
{\sc S.~Han, M.-J. Kang, J.~Kim, and H.~Lee}, {\em Long-time behavior towards
  viscous-dispersive shock for {N}avier-{S}tokes equations of {K}orteweg type},
  J. Differ. Equ. \textbf{426} (2025), pp.~317--387.

\bibitem{Hasp09}
{\sc B.~Haspot}, {\em Existence of strong solutions for nonisothermal
  {K}orteweg system}, Ann. Math. Blaise Pascal \textbf{16} (2009), no.~2,
  pp.~431--481.

\bibitem{Hasp11}
\leavevmode\vrule height 2pt depth -1.6pt width 23pt, {\em Existence of global
  weak solution for compressible fluid models of {K}orteweg type}, J. Math.
  Fluid Mech. \textbf{13} (2011), no.~2, pp.~223--249.

\bibitem{HaLi94}
{\sc H.~Hattori and D.~N. Li}, {\em Solutions for two-dimensional system for
  materials of {K}orteweg type}, SIAM J. Math. Anal. \textbf{25} (1994), no.~1,
  pp.~85--98.

\bibitem{HaLi96b}
\leavevmode\vrule height 2pt depth -1.6pt width 23pt, {\em The existence of
  global solutions to a fluid dynamic model for materials for {K}orteweg type},
  J. Partial Differ. Equ. \textbf{9} (1996), no.~4, pp.~323--342.

\bibitem{HaLi96a}
\leavevmode\vrule height 2pt depth -1.6pt width 23pt, {\em Global solutions of
  a high-dimensional system for {K}orteweg materials}, J. Math. Anal. Appl.
  \textbf{198} (1996), no.~1, pp.~84--97.

\bibitem{Hoe14}
{\sc M.~A. Hoefer}, {\em Shock waves in dispersive {E}ulerian fluids}, J.
  Nonlinear Sci. \textbf{24} (2014), no.~3, pp.~525--577.

\bibitem{HoAb07}
{\sc M.~A. Hoefer and M.~J. Ablowitz}, {\em Interactions of dispersive shock
  waves}, Phys. D \textbf{236} (2007), no.~1, pp.~44--64.

\bibitem{HZ00}
{\sc P.~Howard and K.~Zumbrun}, {\em Pointwise estimates and stability for
  dispersive-diffusive shock waves}, Arch. Ration. Mech. Anal. \textbf{155}
  (2000), no.~2, pp.~85--169.

\bibitem{HuTh02}
{\sc J.~Humpherys}, {\em Spectral energy methods and the stability of shock
  waves}, PhD thesis, Indiana University, 2002.

\bibitem{Hu09}
\leavevmode\vrule height 2pt depth -1.6pt width 23pt, {\em On the shock wave
  spectrum for isentropic gas dynamics with capillarity}, J. Differ. Equ.
  \textbf{246} (2009), no.~7, pp.~2938--2957.

\bibitem{J95}
{\sc C.~K. R.~T. Jones}, {\em Geometric singular perturbation theory}, in
  Dynamical systems ({M}ontecatini {T}erme, 1994), R.~Johnson, ed., vol.~1609
  of Lecture Notes in Math., Springer, Berlin, 1995, pp.~44--118.

\bibitem{KaPro13}
{\sc T.~Kapitula and K.~Promislow}, {\em Spectral and dynamical stability of
  nonlinear waves}, vol.~185 of Applied Mathematical Sciences, Springer-Verlag,
  New York, 2013.

\bibitem{Kat80}
{\sc T.~Kato}, {\em Perturbation Theory for Linear Operators}, Classics in
  Mathematics, Springer-{V}erlag, {N}ew {Y}ork, {S}econd~ed., 1980.

\bibitem{KhdPhD89}
{\sc M.~Khodja}, {\em Nonlinear stability of oscillatory traveling waves for
  some systems of hyperbolic conservation laws}, PhD thesis, University of
  Michigan, 1989.

\bibitem{Kortw1901}
{\sc D.~J. Korteweg}, {\em Sur la forme que prennent les \'{e}quations du
  mouvement des fluides si l'on tient compte des forces capillaires causées
  par des variations de densit\'{e} consid\'{e}rables mais continues et sur la
  th\'{e}orie de la capillarit\'{e} dans l'hypoth\`{e}se d'une variation
  continue de la densit\'{e}}, Arch. N\'{e}erl. Sci. Exactes Nat. Ser. II
  \textbf{6} (1901), pp.~1--24.

\bibitem{Kot08}
{\sc M.~Kotschote}, {\em Strong solutions for a compressible fluid model of
  {K}orteweg type}, Ann. Inst. H. Poincar\'e Anal. Non Lin\'eaire \textbf{25}
  (2008), no.~4, pp.~679--696.

\bibitem{Kot10}
\leavevmode\vrule height 2pt depth -1.6pt width 23pt, {\em Strong
  well-posedness for a {K}orteweg-type model for the dynamics of a compressible
  non-isothermal fluid}, J. Math. Fluid Mech. \textbf{12} (2010), no.~4,
  pp.~473--484.

\bibitem{LaZ21a}
{\sc C.~Lattanzio and D.~Zhelyazov}, {\em Traveling waves for quantum
  hydrodynamics with nonlinear viscosity}, J. Math. Anal. Appl. \textbf{493}
  (2021), no.~1, pp.~124503, 17.

\bibitem{Lzrd88}
{\sc D.~Lazard}, {\em Quantifier elimination: optimal solution for two
  classical examples}, J. Symbolic Comput. \textbf{5} (1988), no.~1-2,
  pp.~261--266.

\bibitem{PaW04}
{\sc J.~Pan and G.~Warnecke}, {\em Asymptotic stability of traveling waves for
  viscous conservation laws with dispersion}, Adv. Differ. Equ. \textbf{9}
  (2004), no.~9-10, pp.~1167--1184.

\bibitem{Pe85}
{\sc R.~L. Pego}, {\em Remarks on the stability of shock profiles for
  conservation laws with dissipation}, Trans. Amer. Math. Soc. \textbf{291}
  (1985), no.~1, pp.~353--361.

\bibitem{PlV22}
{\sc R.~G. Plaza and J.~M. Valdovinos}, {\em Dissipative structure of
  one-dimensional isothermal compressible fluids of {K}orteweg type}, J. Math.
  Anal. Appl. \textbf{514} (2022), no.~2, p.~Paper No. 126336.

\bibitem{Rees1922}
{\sc E.~L. Rees}, {\em Graphical discussion of the roots of a quartic
  equation}, Amer. Math. Monthly \textbf{29} (1922), no.~2, pp.~51--55.

\bibitem{San02}
{\sc B.~Sandstede}, {\em Stability of travelling waves}, in Handbook of
  dynamical systems, {V}ol. 2, B.~Fiedler, ed., North-Holland, Amsterdam, 2002,
  pp.~983--1055.

\bibitem{Sl83}
{\sc M.~Slemrod}, {\em Admissibility criteria for propagating phase boundaries
  in a van der {W}aals fluid}, Arch. Ration. Mech. Anal. \textbf{81} (1983),
  no.~4, pp.~301--315.

\bibitem{Sl84a}
\leavevmode\vrule height 2pt depth -1.6pt width 23pt, {\em Dynamic phase
  transitions in a van der {W}aals fluid}, J. Differ. Equ. \textbf{52} (1984),
  no.~1, pp.~1--23.

\bibitem{SmSh82}
{\sc J.~A. Smoller and R.~Shapiro}, {\em Dispersion and shock-wave structure},
  J. Differ. Equ. \textbf{44} (1982), no.~2, pp.~281--305.

\bibitem{TZh14}
{\sc Z.~Tan and R.~Zhang}, {\em Optimal decay rates of the compressible fluid
  models of {K}orteweg type}, Z. Angew. Math. Phys. \textbf{65} (2014), no.~2,
  pp.~279--300.

\bibitem{Teschl12}
{\sc G.~Teschl}, {\em Ordinary differential equations and dynamical systems},
  vol.~140 of Graduate Studies in Mathematics, American Mathematical Society,
  Providence, RI, 2012.

\bibitem{vdW1894}
{\sc J.~D. van~der Waals}, {\em Thermodynamische {T}heorie der
  {K}apillarit\"{a}t unter {V}oraussetzung stetiger {D}ichte\"{a}nderung}, Z.
  Phys. Chem. \textbf{13} (1894), no.~1, pp.~657--725.

\bibitem{Wang-X04}
{\sc X.~Wang}, {\em A simple proof of {D}escartes's rule of signs}, Am. Math.
  Mon. \textbf{111} (2004), no.~6, pp.~525--526.

\bibitem{WaTa11}
{\sc Y.~Wang and Z.~Tan}, {\em Optimal decay rates for the compressible fluid
  models of {K}orteweg type}, J. Math. Anal. Appl. \textbf{379} (2011), no.~1,
  pp.~256--271.

\bibitem{We10}
{\sc H.~Weyl}, {\em \"{U}ber gew\"ohnliche {D}ifferentialgleichungen mit
  {S}ingularit\"aten und die zugeh\"origen {E}ntwicklungen willk\"urlicher
  {F}unktionen}, Math. Ann. \textbf{68} (1910), no.~2, pp.~220--269.

\bibitem{ZLY16}
{\sc W.~Zhang, X.~Li, and Y.~Yong}, {\em Asymptotic stability of monotone
  increasing traveling wave solutions for viscous compressible fluid equations
  with capillarity term}, J. Math. Anal. Appl. \textbf{434} (2016), no.~1,
  pp.~401--412.

\bibitem{Z00}
{\sc K.~Zumbrun}, {\em Dynamical stability of phase transitions in the
  $p$-system with viscosity-capillarity}, SIAM J. Appl. Math. \textbf{60}
  (2000), no.~6, pp.~1913--1924.

\bibitem{ZH98}
{\sc K.~Zumbrun and P.~Howard}, {\em Pointwise semigroup methods and stability
  of viscous shock waves}, Indiana Univ. Math. J. \textbf{47} (1998), no.~3,
  pp.~741--871.

\end{thebibliography}


\bibliographystyle{newstyle} 





%
%
%
%
%
%

\end{document}